\documentclass[11pt,amssymb]{amsart}
\usepackage{amssymb}
\usepackage[mathscr]{eucal}

\newcommand{\Z}{{\mathbb Z}}

\newcommand{\Q}{{\mathbb Q}}

\newcommand{\fR}{{\mathfrak R}}

\newcommand{\I}{{\mathcal O}}

\newcommand{\G}{{\mathcal G}}
\newcommand{\g}{{\mathfrak{g}}}
\newcommand{\fa}{{\mathfrak{a}}}

\newcommand{\B}{{\mathscr{B}}}

\newcommand{\Hom}{{\rm Hom}}

\newcommand{\ba}{\mbox{\boldmath{$\alpha$}}}
\newcommand{\bm}{\mbox{\boldmath{$\mu$}}}
\voffset=-1cm \hoffset=-1cm \textwidth=16cm \textheight=23.3cm

\newtheorem{thm}{Theorem}[section]
\newtheorem{lemma}[thm]{Lemma}
\newtheorem{prop}[thm]{Proposition}
\newtheorem{cor}[thm]{Corollary}

\hoffset=-2.3cm \textwidth=16.5cm \voffset = -10mm
\textheight=22.5cm
\usepackage[all,cmtip]{xy}
\begin{document}

\title[Representations of algebraic groups and deformations]{On abstract representations of the groups of rational points of algebraic groups and their deformations}

\begin{abstract}
In this paper, we continue our study, begun in \cite{IR}, of abstract representations of elementary subgroups of Chevalley groups of rank $\geq 2.$ First, we extend the methods of \cite{IR} to analyze representations of elementary groups over arbitrary associative rings, and as a consequence, prove the conjecture of Borel and Tits \cite{BT} on abstract homomorphisms of the groups of rational points of algebraic groups for groups of the form ${\bf SL}_{n,D}$, where $D$ is a finite-dimensional central division algebra over a field of characteristic zero. Second, we apply the results of \cite{IR} to study deformations of representations of elementary subgroups of universal Chevalley groups of rank $\geq 2$ over finitely generated commutative rings.

\end{abstract}

\author[I.A.~Rapinchuk]{Igor A. Rapinchuk}

\address{Department of Mathematics, Yale University, New Haven, CT 06502}

\email{igor.rapinchuk@yale.edu}

\maketitle

\section{Introduction and statement of the main results}\label{S:I}

The goal of this paper is two-fold. First, we extend the
methods and results developed in our paper \cite{IR} to analyze abstract representations of Chevalley groups over commutative rings to elementary groups over arbitrary associative rings. As a consequence of this analysis, we prove the conjecture of Borel and Tits (\cite{BT}, 8.19) on abstract homomorphisms of the groups of rational points of algebraic groups for groups of the form $\mathbf{SL}_{n, D}$, where $D$ is a finite-dimensional central division algebra over a field of characteristic zero. Second, we apply the results of \cite{IR} to study deformations of representations of the elementary subgroup $\Gamma = E(\Phi, R)$ of a universal Chevalley group associated to a root system $\Phi$ of rank $\geq 2$ over a finitely generated commutative ring $R$. This relies on the description, obtained in \cite{IR}, of representations with {\it nonreductive image}, which are at the heart of the Borel-Tits conjecture (recall that representations with reductive image were completely described in \cite{BT}). We also use techniques of representation and character varieties (cf.~Lubotzky-Magid \cite{LM}), in conjunction with the fact that such $\Gamma$ satisfies Kazhdan's property (T), which was recently established in \cite{EJK}.

Before formulating of our first result, let us recall the statement of the Borel-Tits conjecture. As usual, for an algebraic $G$ defined over a field $k$, we will denote by $G^+$ the subgroup of $G(k)$ generated by the $k$-rational points of the unipotent radicals of the parabolic $k$-subgroups of $G$.
\vskip3mm

\noindent (BT) \ \ \parbox{15cm}{Let $G$ and $G'$ be algebraic groups defined over infinite fields $k$ and $k'$, respectively. If $\rho \colon G(k) \to G'(k')$ is any abstract homomorphism such that $\rho(G^+)$ is Zariski-dense in $G'(k'),$ then {\it there exists a commutative finite-dimensional $k'$-algebra $C$ and a ring homomorphism $f_C \colon k \to C$ such that  $\rho = \sigma \circ r_{C/k'} \circ F$, where $F \colon G(k) \to _{C}\!G(C)$ is induced by $f_C$ ($_{C}\!G$ is the group obtained by change of scalars), $r_{C/k'} \colon _{C}\!G(C) \to R_{C/k'} (_{C}\!G)(k')$ is the canonical isomorphism  (here $R_{C/k'}$ denotes the functor of restriction of scalars), and $\sigma$ is a rational $k'$-morphism of $R_{C/k'} (_{C}\!G)$ to $G'.$}}

\vskip3mm

If an abstract homomorphism $\rho \colon G(k) \to G'(k')$ admits a factorization as in (BT), we will say that $\rho$
has a {\it standard description}\footnotemark \footnotetext{It was pointed out to us by G.~Prasad that, due to the existence of exotic pseudo-reductive groups, constructed in \cite{CGP}, 7.2, one should probably require that char $k, k' \neq 2,3$ in the statement of (BT).}. Our result concerning (BT) is as follows. Given a finite-dimensional central division algebra $D$ over a field $k$, we let $G = \mathbf{SL}_{n,D}$ denote the algebraic $k$-group such that $G(k) = SL_{n} (D),$ the group of elements of $GL_n (D)$ having reduced norm one; recall that $G$ is an inner form of type $A_{\ell}$ (cf. \cite{KMRT} or \cite{PR} for the details).

\vskip2mm

\noindent {\bf Theorem 1.} {\it Let $D$ be a finite-dimensional central division algebra over a field $k$ of characteristic 0, and let $G = \mathbf{SL}_{n,D}$, where $n \geq 3.$ Let $\rho \colon G(k) \to GL_m (K)$ be a finite-dimensional linear representation of $G(k)$ over an algebraically closed field $K$ of characteristic 0, and set $H = \overline{\rho (G(k))}$ (Zariski-closure). Then the abstract homomorphism $\rho \colon G(k) \to H(K)$ has a standard description.}

\vskip2mm

In fact, we will see in \S \ref{S:NC} that a similar, but somewhat weaker, statement
can be established for representations of elementary groups over arbitrary associative rings, not just division algebras (see Theorem \ref{NC:T-NonComm2} for a precise statement). It should be observed that while the overall structure of the proof of Theorem 1 resembles that of the Main Theorem of \cite{IR}, the analogs of the $K$-theoretic results of Stein
\cite{St2}, which played a crucial role in \cite{IR}, were not available in the noncommutative setting. So, part of our argument is dedicated to developing the required $K$-theoretic results, which is done in \S 2 using the computations of relative $K_2$ groups given by Bak and Rehmann \cite{BR}.

As we have already mentioned, results describing representations of a given group $\Gamma$ with nonreductive image can be used to analyze {\it deformations} of representations of $\Gamma$, which is the second major theme of this paper. Formally, over a field of characteristic 0, deformations of (completely reducible) $n$-dimensional representations of a finitely generated group $\Gamma$ can be understood in terms of the corresponding character variety $X_n (\Gamma).$
For $\Gamma = E(\Phi, R)$, the elementary subgroup of $G(R)$, where $G$ is a universal Chevalley-Demazure group scheme corresponding to a reduced irreducible root system of rank $>1$ and $R$ is a finitely generated commutative ring, we use the results of \cite{IR} to estimate the dimension of $X_n (\Gamma)$ as a function of $n$ (we note that it was recently shown in \cite{EJK} that such $\Gamma$ possesses Kazhdan's property (T), hence is finitely generated, so the representation variety $R_n (\Gamma)$ and the associated character variety $X_n (\Gamma)$ are defined -- see \S \ref{S:D} for a brief review of these notions and \cite{LM} for complete details). To put our result into perspective, we recall that for $\Gamma = F_d,$ the free group on $d > 1$ generators, the dimension $\varkappa_n (\Gamma) := \dim X_n (\Gamma)$ is given by
$$
\varkappa_n (\Gamma) = (d-1)n^2 + 1,
$$
i.e. the growth of $\varkappa_n (\Gamma)$ is {\it quadratic} in $n.$ It follows that the rate of growth cannot be more than quadratic for {\it any} finitely generated group (and it is indeed quadratic in some important situations, such as $\Gamma = \pi_g$, the fundamental group of a compact orientable surface of genus $g > 1$, cf. \cite{RBC}). At the other end of the spectrum are the groups $\Gamma$, called $SS$-{\it rigid}, for which $\varkappa_n (\Gamma) = 0$ for all $n \geq 1$. For example, according to Margulis's Superrigidity Theorem (\cite{Ma}, Ch. VII, Theorem 5.6 and 5.25, Theorem A), all irreducible higher-rank lattices are $SS$-rigid (see \S \ref{S:SR} regarding the superrigidity of groups like $E(\Phi, \I),$ where $\I$ is a ring of algebraic integers). Now, the argument in (\cite{AR1}, \S 2) shows that if $\Gamma$ is not $SS$-rigid, then the rate of growth of $\varkappa_{\Gamma}(n)$ is at least linear. It follows that unless $\Gamma$ is $SS$-rigid, the growth rate of $\varkappa_n (\Gamma)$ is between linear and quadratic. Our result shows that for $\Gamma = E(\Phi, R)$ as above, this rate is the minimal possible, namely linear.

To formulate our result, we recall that a pair $(\Phi, R)$ consisting of a reduced irreducible root system of rank $> 1$ and a commutative ring $R$ was called {\it nice} in \cite{IR} if $2 \in R^{\times}$ whenever $\Phi$ contains a subsystem of type $B_2$, and $2, 3 \in R^{\times}$ if $\Phi$ is of type $G_2.$

\vskip2mm

\noindent {\bf Theorem 2.} {\it Let $\Phi$ be a reduced irreducible root system of rank $\geq 2$, $R$ a finitely generated commutative ring such that $(\Phi, R)$ is a nice pair, and $G$ the universal Chevalley-Demazure group scheme of type $\Phi$. Denote by $\Gamma = E(\Phi, R)$ the elementary subgroup of $G(R)$ and consider the variety $X_n (\Gamma)$ of characters of $n$-dimensional representations of $\Gamma$ over an algebraically closed field $K$ of characteristic 0. Then there exists a constant $c = c(R)$ (depending only on $R$) such that $\varkappa_n (\Gamma) := \dim X_n (\Gamma)$ satisfies
$$
\varkappa_n (\Gamma) \leq c \cdot n
$$
for all $n \geq 1.$}

\vskip2mm

The proof is based on a suitable variation of the approach, going back to A.~Weil, of bounding the dimension of the tangent space to $X_n (\Gamma)$ at a point $[\rho]$ corresponding to a representation $\rho \colon \Gamma \to GL_n (K)$ by the dimension of the cohomology group $H^1 (\Gamma, \mathrm{Ad}_{GL_n} \circ \rho).$ Using the results of \cite{IR}, we describe the latter space in terms of certain spaces of derivations of $R.$  This leads to the conclusion that the constant $c$ in Theorem 2 does not exceed the minimal number of generators $d$ of $R$ (i.e. the smallest integer such that there exists a surjection $\Z[X_1, \dots, X_d] \twoheadrightarrow R$). In fact, if $R$ is the ring of integers or $S$-integers in a number field $L$, then $c = 0$ (see Lemma \ref{D:L-Der}), so we obtain that
$\varkappa_n (\Gamma) = 0$ for all $n$, i.e. $\Gamma$ is $SS$-rigid. We then show in \S \ref{S:SR} that the results of \cite{IR} actually imply that $\Gamma = E(\Phi, R)$ is in fact superrigid in this case.
The proof of Theorem 2 uses the validity of property (T) for $\Gamma = E(\Phi, R).$ On the other hand, groups of this form account for most of the known examples of linear Kazhdan groups, so it is natural to ask if the conclusion of Theorem 2 can be extended to all discrete linear Kazhdan groups.

\vskip2mm

\noindent {\bf Conjecture.} {\it Let $\Gamma$ be a discrete linear group having Kazhdan's property (T). Then there exists a constant $c = c(\Gamma)$ such that $$\varkappa_n (\Gamma) \leq c \cdot n$$ for all $n \geq 1.$}

\vskip2mm

The paper is organized as follows. In \S \ref{S:KT}, we begin by summarizing some well-known facts from $K$-theory and then use the results of \cite{BR} to obtain a description of the group $K_2$ of certain associative rings similar to the one given by Stein in the commutative case. This is then used in the proof of Theorem 1, which is given in \S \ref{S:NC}, along with similar results for arbitrary associative rings.
Next, we begin \S \ref{S:D} with a brief review of representation and character varieties and some related cohomological machinery, after which we turn to the proof of Theorem 2. Finally, in \S \ref{S:SR}, we show how the techniques of \cite{IR}, along with some considerations involving derivations, can be used to establish various rigidity results for the elementary groups $E(\Phi, \I),$ where $\I$ is a ring of algebraic integers.

\vskip2mm

\noindent {\bf Notations and conventions.} Throughout the paper, $\Phi$ will denote a reduced irreducible root system of rank $\geq 2.$ All of our rings are assumed to be associative and unital. As noted earlier, if $R$ is a commutative ring, we say that the pair $(\Phi, R)$ is {\it nice} if $2 \in R^{\times}$ whenever $\Phi$ contains a subsystem of type $B_2$, and $2, 3 \in R^{\times}$ if $\Phi$ is of type $G_2.$ Finally, given an algebraic group $H$, we let $H^{\circ}$ denote the connected component of the identity.

\vskip2mm

\noindent {\bf Acknowledgments.} I would like to thank my advisor Professor G.A.~Margulis for suggesting the problems and for a number of helpful discussions. I am indebted to Professor G.~Prasad for insightful conversations about pseudo-reductive groups during my visit to the University of Michigan. Finally, I thank M.~Kassabov for useful communications regarding algebraic rings.  

\vskip3mm

\section{$K$-theoretic preliminaries} \label{S:KT}

In this section, we develop the $K$-theoretic results that will be needed in the proof of Theorem 1. Even though the statements in this section are consequences of some well-known results, to the best of our knowledge, they have never appeared explicitly in the literature in the form that we require.
The main objective
will be to
use the computations of Bak and Rehmann \cite{BR} to establish certain analogs in the noncommutative setting of Stein's \cite{St2} description of the group $K_2$ of a semilocal commutative ring (see Propositions \ref{KT:P-BR2} and \ref{KT:P-DA} below).

We begin by recalling some standard definitions.
Let $R$ be an associative unital ring. For $1 \leq i,j \leq n, i \neq j,$ and $r \in R$, let $e_{ij}(r) \in GL_n (R)$ be the elementary matrix with $r$ in the $(i,j)$-th place, and denote by $E_n (R)$ the subgroup of $GL_n (R)$, called the {\it elementary group}, generated by all the $e_{ij} (r)$.
If $n \geq 3,$ it is well-known that the elementary matrices in $GL_n (R)$ satisfy the following relations:

\vskip2mm

\noindent (R1) $e_{ij} (r) e_{ij} (s) = e_{ij} (r+s)$;

\vskip2mm

\noindent (R2) $[e_{ij}(r), e_{kl}(s)] =1$ if $i \neq l$, $j \neq k$;

\vskip2mm

\noindent (R3) $[e_{ij} (r), e_{jl} (s)] = e_{il} (rs)$ if $i \neq l.$

\vskip2mm

\noindent The {\it Steinberg group} over $R$, denoted $\mathrm{St}_n (R)$, is defined to be the group
generated by
all symbols $x_{ij} (r)$, with $1 \leq i,j \leq n$, $i \neq j,$ and $r \in R$, subject to the natural analogs of the relations (R1)-(R3) written in terms of the $x_{ij}(r).$
From the definition, it is clear that there exists a canonical surjective group homomorphism
$$
\pi_R \colon \mathrm{St}_n (R) \to E_n (R), \ \ \ x_{ij} (r) \mapsto e_{ij} (r),
$$
and we set
$$
K_2 (n, R) = \ker (\mathrm{St}_n (R) \stackrel{\pi_R}{\longrightarrow} E_n (R)).
$$
It is easy to see that there exist natural homomorphisms
$\mathrm{St}_n (R) \to \mathrm{St}_{n+1} (R)$ and $E_n(R) \hookrightarrow E_{n+1}(R)$, which induce homomorphisms $K_2 (n, R) \to K_2 (n+1, R)$ (cf. \cite{HO}, \S 1.4).
Also notice that the
pair $(\mathrm{St}_n (R), \pi_R)$ is functorial in the following sense: given a homomorphism of rings $f \colon R \to S$, there is a commutative diagram of group homomorphisms
$$
\xymatrix{\mathrm{St}_n (R) \ar[d]_{\pi_R} \ar[r]^{\tilde{F}} & \mathrm{St}_n (S) \ar[d]^{\pi_S} \\ E_n (R) \ar[r]^F & E_n (S) }
$$
where $F$ and $\tilde{F}$ are the homomorphisms induced by $f$, defined on generators by
$$
F \colon e_{ij} (t) \mapsto e_{ij} (f(t)) \ \ \ \mathrm{and} \ \ \  \tilde{F} \colon x_{ij} (t) \mapsto x_{ij} (f(t)).
$$
It follows from the commutativity of the above diagram that $\tilde{F}$ induces a homomorphism $K_2 (n,R) \to K_2 (n,S).$
In the following proposition, we derive some general properties of $K_2 (n,R)$ that will be needed later in this section.

\begin{prop}\label{KT:P-1} \

\noindent {\rm (a)} \parbox[t]{16cm}{Suppose $R$ is an associative unital ring such that $R/\mathrm{Rad}(R)$ is artinian, where $\mathrm{Rad}(R)$ is the Jacobson radical of $R$. Then the natural map $K_2 (3,R) \to K_2 (4,R)$ is an isomorphism. If, moreover, $R$ is finitely generated as a module over its center, then
$K_2 (n,R)$ is a central subgroup of $\mathrm{St}_n (R)$ for $n \geq 3.$}

\vskip1mm

\noindent {\rm (b)} \parbox[t]{16cm}{Suppose $C$ is a commutative finite dimensional algebra over a field $K$ and let $A = M_m (C)$ be the ring of $m \times m$ matrices over $C$. For $a \in C$ and $1 \leq k, l \leq m$, let $\tilde{y}_{kl}(a) \in A$ be the matrix with $a$ as the $(k,l)$-entry and 0 for all other entries.
Then for $n \geq 3$, the maps
$$
\tilde{\psi} (x^A_{ij} (\tilde{y}_{kl}(a))) = x^C_{(i-1)m+k, (j-1)m + l} (a) \ \ \ \ and \ \ \ \ \psi (e^A_{ij} (\tilde{y}_{kl}(a))) = e^C_{(i-1)m+k, (j-1)m + l} (a),
$$
where the $x^A_{ij} (a)$ $($resp. $e^A_{ij} (a))$ are the generators of $\mathrm{St}_n (A)$ $($resp.  $E_n (A))$ and the $x^C_{ij} (c)$ $($resp. $e^C_{ij} (c))$ are the generators of $\mathrm{St}_{nm} (C)$ $($resp. $E_{nm} (C))$, define
isomorphisms $\tilde{\psi} \colon St_n (A) \to St_{nm} (C)$ and $\psi \colon E_n (A) \to E_{nm} (C)$ such that the following diagram commutes:
\begin{equation}\label{NC:D-1}
\xymatrix{St_n (A) \ar[d]_{\pi_{A}} \ar[r]^{\tilde{\psi}} & St_{nm} (C) \ar[d]^{\pi_C} \\ E_{n} (A) \ar[r]^{\psi} & E_{nm} (C)}
\end{equation}
In particular, $K_2 (n, M_m(C)) \simeq K_2 (nm, C).$}
\end{prop}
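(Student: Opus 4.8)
My plan is to reduce everything to known $K_2$-stability. First I would observe that $R/\mathrm{Rad}(R)$ has zero Jacobson radical, so being artinian it is semisimple by Wedderburn theory; hence $R$ is semilocal and therefore has stable rank one. For such rings the functors $K_2(n,\cdot)$ are expected to have stabilized by $n=3$, and I would invoke Stein's surjective stability in ``dimension $0$'' (\cite{St2}) together with the injective $K_2$-stability over rings of stable rank one (see \cite{HO}, Ch.~4) to conclude that $K_2(n,R)\to K_2(n+1,R)$ is an isomorphism for all $n\geq3$; the first assertion is then the case $n=3$. For the centrality statement I would argue as follows. Injectivity of $K_2(3,R)\to K_2(n,R)$ forces the stabilization map $\mathrm{St}_3(R)\hookrightarrow\mathrm{St}_n(R)$ to be injective for $n\geq3$: an element of its kernel maps to $1$ under $\pi_n$, hence — since $E_3(R)\hookrightarrow E_n(R)$ — lies in $K_2(3,R)$, on which the stabilization map is injective. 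Granting, when $R$ is module-finite over its center, the known centrality of $K_2(n,R)$ in $\mathrm{St}_n(R)$ for $n$ large (cf.~\cite{HO}, Ch.~4), one then observes that an element of $K_2(3,R)$, being central in $\mathrm{St}_n(R)$ for such $n$ and lying in the subgroup $\mathrm{St}_3(R)$, is a fortiori central in $\mathrm{St}_3(R)$; the same observation handles every $n\geq3$.

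\textbf{Part (b): the elementary groups.} The backbone is the ring isomorphism $M_n(M_m(C))\stackrel{\sim}{\longrightarrow}M_{nm}(C)$ that views an $n\times n$ array of $m\times m$ blocks as a single $nm\times nm$ matrix, reindexing block position $(i,j)$, inner position $(k,l)$, as $\bigl((i-1)m+k,\ (j-1)m+l\bigr)$. Since $i\neq j$, the indices $(i-1)m+k$ and $(j-1)m+l$ lie in distinct size-$m$ blocks, hence are distinct, so under this isomorphism $e^A_{ij}(\tilde y_{kl}(a))$ maps precisely to $e^C_{(i-1)m+k,(j-1)m+l}(a)$; thus $\psi$ is just the restriction of that ring isomorphism, and the only point is that it carries $E_n(A)$ onto $E_{nm}(C)$. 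The inclusion $\psi(E_n(A))\subseteq E_{nm}(C)$ is immediate, since by (R1) every block-elementary generator satisfies $e^A_{ij}(B)=\prod_{k,l}e^A_{ij}(\tilde y_{kl}(b_{kl}))$. For the reverse inclusion I would note that the elements $e^C_{pq}(a)$ produced above are exactly those with $p,q$ in distinct blocks, and that a generator $e^C_{pq}(c)$ with $p,q$ in the \emph{same} block is recovered — using $n\geq3$, so that a third block is available — from (R3) as $e^C_{pq}(c)=[e^C_{pr}(c),e^C_{rq}(1)]$ with $r$ chosen outside the blocks of $p$ and $q$.

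\textbf{Part (b): the Steinberg groups and the conclusion.} The map $\tilde\psi$ is built by running the same bookkeeping inside $\mathrm{St}_n(A)$ and $\mathrm{St}_{nm}(C)$ (all of (R1)--(R3) are available since $nm\geq3$). Using (R1) one writes a general generator as $x^A_{ij}(B)=\prod_{k,l}x^A_{ij}(\tilde y_{kl}(b_{kl}))$ and sets $\tilde\psi\bigl(x^A_{ij}(B)\bigr)=\prod_{k,l}x^C_{(i-1)m+k,(j-1)m+l}(b_{kl})$; the factors commute pairwise by (R2) (their indices sit in two fixed distinct blocks), so the expression is unambiguous, and one checks block by block that it respects (R1)--(R3). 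The inverse is constructed dually, defining $x^C_{pq}(c)$ for $p,q$ in the same block through the commutator identity used for $\psi$ above; here one must verify that this assignment is independent of the auxiliary index $r$ and respects the Steinberg relations of $\mathrm{St}_{nm}(C)$. This last verification — in effect a Morita invariance for the Steinberg group — is the step I expect to be the main obstacle (the analogous delicate point in part (a) being the descent of centrality to $n=3$, which is why module-finiteness over the center is imposed). Once $\tilde\psi$ and $\psi$ are in hand, commutativity of \eqref{NC:D-1} is checked directly on the generators $x^A_{ij}(\tilde y_{kl}(a))$, both composites sending this element to $e^C_{(i-1)m+k,(j-1)m+l}(a)$, and restricting $\tilde\psi$ to kernels yields $K_2(n,M_m(C))=\ker\pi_A\cong\ker\pi_C=K_2(nm,C)$.
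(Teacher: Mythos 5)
Part (a) of your proposal is essentially the paper's argument: stability gives $K_2(3,R)\simeq K_2(4,R)$ (the paper cites van der Kallen \cite{WDK}, via the property $SR_2^*$, rather than Stein plus \cite{HO}, but the logic is identical), centrality for $n\geq 4$ comes from \cite{HO}, Theorem 1.4.15, and your descent of centrality to $n=3$ through the injections $E_3(R)\hookrightarrow E_n(R)$ and $K_2(3,R)\hookrightarrow K_2(n,R)$ is exactly the step the paper leaves implicit. Your treatment of $\psi$ in part (b) is also fine (the paper simply declares it easy).

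The genuine gap is in part (b), at exactly the point you flag yourself: you never prove that $\tilde\psi$ is injective. Surjectivity is cheap (as you note, $x^C_{pq}(c)$ with $p,q$ in one block is, by (R3), a commutator of elements in the image), but the isomorphism statement --- and hence $K_2(n,M_m(C))\simeq K_2(nm,C)$, which is what the rest of the paper needs --- rests entirely on producing an inverse, and your plan to define it on the generators $x^C_{pq}(c)$ via auxiliary commutators and then verify independence of the auxiliary index together with all Steinberg relations is precisely the Morita-invariance computation you admit you have not carried out; as written, the proposal does not establish the main claim. The paper avoids this computation with a soft argument you could adopt: one may assume $m\geq 2$, so $nm\geq 6$ and $\pi_C\colon \mathrm{St}_{nm}(C)\to E_{nm}(C)$ is a \emph{universal} central extension, while part (a), applied to $A=M_m(C)$ (which is module-finite over its center $C$ and has $A/\mathrm{Rad}(A)$ artinian), makes $\pi_A\colon \mathrm{St}_n(A)\to E_n(A)$ a central extension with $K_2(n,A)$ central. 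Universality then yields a unique $\tilde\varphi\colon \mathrm{St}_{nm}(C)\to\mathrm{St}_n(A)$ with $\psi\circ\pi_A\circ\tilde\varphi=\pi_C$, and uniqueness forces $\tilde\psi\circ\tilde\varphi=\mathrm{id}$. For the other composite, one gets $(\tilde\varphi\circ\tilde\psi)(x)=x\,z_x$ with $z_x\in K_2(n,A)$ central, so $x\mapsto z_x$ is a homomorphism from the perfect group $\mathrm{St}_n(A)$ to an abelian group, hence trivial, giving $\tilde\varphi\circ\tilde\psi=\mathrm{id}$. This proves bijectivity of $\tilde\psi$ without ever checking relations for an explicitly defined inverse.
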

\begin{proof}
(a) By (\cite{WDK}, Theorem 7), the fact that $R/\mathrm{Rad}(R)$ is artinian implies that it has property $SR_2^*$, and then (\cite{WDK}, Theorem 6) yields the required isomorphism. Now, if $R$ is finitely generated as a module over its center, then according to (\cite{HO}, Theorem 1.4.15), $\pi_R \colon \mathrm{St}_n (R) \to E_n (R)$ is a central extension for $n \geq 4$ (in fact, a {\it universal} central extension for $n \geq 5$). So, in view of the canonical isomorphism $K_2 (3, R) \simeq K_2 (4,R)$,
we obtain that $K_2 (n,R)$ is a central subgroup of $\mathrm{St}_n (R)$ for $n \geq 3,$ as claimed.

\vskip1mm

\noindent (b) First note that since $A$ is generated additively by the $\tilde{y}_{kl} (a)$, with $1 \leq k, l \leq m$, it follows that the $x^A_{ij} (\tilde{y}_{kl}(a))$ and $e^A_{ij} (\tilde{y}_{kl} (a))$ generate $\mathrm{St}_n (A)$ and $E_n (A)$, respectively, so it suffices to define $\tilde{\psi}$ and $\psi$ on these elements.
By direct computation, one verifies that the maps $\tilde{\psi}$ and $\psi$ given in the statement of the proposition are group homomorphisms.
In fact, it is easy to see that $\psi$ is actually an isomorphism for all rings $C$ and $n \geq 3.$

Next, since without loss of generality $m \geq 2$, we have $nm \geq 6,$ so as noted in the proof of (a), $\pi_C \colon St_{nm} (C) \to E_{nm} (C)$ is a {\it universal} central extension and $\pi_A \colon \mathrm{St}_n (A) \to E_n (A)$ is a central extension.
Hence,
there exists a unique group homomorphism $\tilde{\varphi} \colon St_{nm} (C) \to St_n (A)$ making the diagram
\begin{equation}\label{NC:D-2}
\xymatrix{St_n (A) \ar[d]_{\pi_{A}} & \ar[l]_{\tilde{\varphi}}  St_{nm} (C) \ar[d]^{\pi_C} \\ E_{n} (A) \ar[r]^{\psi} & E_{nm} (C)}
\end{equation}
commute, and by universality, we conclude that $\tilde{\psi} \circ \tilde{\varphi} = {\rm id}_{St_{nm} (C)}.$ On the other hand, by the commutativity of the diagrams (\ref{NC:D-1}) and (\ref{NC:D-2}), we have that for any $x \in \mathrm{St}_n (A)$,
$$
(\psi \circ \pi_A \circ \tilde{\varphi} \circ \tilde{\psi})(x) = (\pi_C \circ \tilde{\psi})(x) = (\psi \circ \pi_A)(x). $$
Since $\psi$ is an isomorphism, we conclude that $(\tilde{\varphi} \circ \tilde{\psi})(x) = x z_x,$ where $z_x \in K_2 (n,A).$ The centrality of $K_2 (n,A)$ then implies that the map $x \mapsto z_x$ is a homomorphism $\mathrm{St}_n (A) \to K_2 (n,A)$, which must be trivial as $\mathrm{St}_n (A)$ is a perfect group. Thus, $\tilde{\phi} \circ \tilde{\psi} = {\rm id}_{St_n(A)},$ as required. It immediately follows that $K_2 (n, A) \simeq K_2 (nm, C).$
\end{proof}

Next, let us summarize the results of \cite{BR} dealing with relative $K_2$ groups of associative rings (see Theorem \ref{KT:T-BR1} below). From now on, we will always assume that $n \geq 3.$ First, we need to introduce some additional notation.
As above, let $R$ be an associative unital ring. Given $u \in R^{\times}$, we define, for $i \neq j$, the following standard elements of $\mathrm{St}_n (R)$:
$$
w_{ij} (u) = x_{ij} (u) x_{ji}(-u^{-1}) x_{ij} (u) \ \ \ \mathrm{and} \ \ \ h_{ij} (u) = w_{ij}(u) w_{ij}(-1).
$$
Notice that the image $\pi_R(h_{ij}(u))$ in $E_n (R)$ is the diagonal matrix with $u$ as the $i$th diagonal entry, $u^{-1}$ as the $j$th diagonal entry, and $1$'s everywhere else on the diagonal. We will also need the following noncommutative version of the usual Steinberg symbols: for $u, v \in R^{\times}$, let
$$
c(u,v) = h_{12}(u) h_{12}(v) h_{12}(vu)^{-1}.
$$
One easily sees that $\pi_R (c(u,v))$ is the diagonal matrix with $uvu^{-1} v^{-1}$ as its first diagonal entry and $1$'s everywhere else on the diagonal. Let $U_n (R)$ be the subgroup of $\mathrm{St}_n(R)$ generated by all the $c(u,v)$, with $u,v \in R^{\times}.$

As in the commutative case, one can also consider relative versions of these constructions.
Let $\fa$ be a two-sided ideal of $R$ and
$$
GL_n (R, \fa) := \ker (GL_n (R) \to GL_n (R/ \fa))
$$
be the congruence subgroup of level $\fa$.
Define $E_n (R, \fa)$ to be the normal subgroup of $E_n (R)$ generated by all elementary matrices $e_{ij} (a)$, with $a \in \fa.$ Now letting
$$
\mathrm{St}_n (R, \fa) = \ker (\mathrm{St}_n (R) \to \mathrm{St}_n (R / \fa)),
$$
we have a natural homomorphism $\mathrm{St}_n (R, \fa) \to E_n (R, \fa)$, and we set
$$
K_2 (n, R, \fa) = \ker (\mathrm{St}_n (R, \fa) \to E_n (R, \fa)).
$$
Finally, let
$$
U_n (R, \fa) := < c(u, 1+a) \mid u \in R^{\times}, 1+a \in (1 + \fa) \cap R^{\times}>
$$
(notice this is contained in $\mathrm{St}_n (R, \fa)$). We should point out that even though for a noncommutative ring, the groups $U_n (R)$ and $U_n (R, \fa)$ may not lie in $K_2(n,R)$, it is well-known that any element of $K_2(n,R) \cap U_n (R)$ is automatically contained in the center of $\mathrm{St}_n (R)$ (cf. \cite{M}, Corollary 9.3). This will be needed in Proposition \ref{KT:P-BR2} below.

The following theorem contains a summary of the results of \cite{BR} that will relevant for our purposes.

\begin{thm}\label{KT:T-BR1}{\rm(cf. \cite{BR}, Theorem 2.9 and Corollary 2.11)}
Let $R$ be an associative unital ring. Suppose that $\fa$ is a two sided ideal contained in the Jacobson radical $\mathrm{Rad}(R)$ of $R$, and that $R$ is additively generated by $R^{\times}.$ Assume $n \geq 3.$ Then

\vskip1mm

\noindent {\rm (1)} \parbox[t]{16cm}{$K_2 (n, R, \fa) \subset U_n (R, \fa)$
and the canonical sequence below is exact
$$
1 \to U_n (R, \fa) \to U_n (R) \to U_n (R/ \fa) \to 1.
$$}

\vskip1mm

\noindent {\rm (2)} \parbox[t]{16cm}{If, moreover, $K_2 (n, R/ \fa) \subset U_n (R/ \fa),$ then $K_2 (n,R) \subset U_n (R)$ and the natural sequence
$$
1 \to K_2 (n, R, \fa) \to K_2(n, R) \to K_2 (n, R/ \fa) \to 1
$$
is exact.}
\end{thm}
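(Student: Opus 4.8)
The plan is to deduce both parts from the relative $K_2$-computations of Bak and Rehmann (\cite{BR}, Theorem 2.9 and Corollary 2.11) by translating their results into our notation and recording them in the unstable form --- for a fixed $n \geq 3$ --- that will be needed later. The first observation is that, since $\fa \subseteq \mathrm{Rad}(R)$, one has $1 + \fa \subseteq R^{\times}$; this makes the relative symbols $c(u, 1+a)$ well defined and visibly contained in $\mathrm{St}_n(R, \fa)$, and it also guarantees that every unit of $R/\fa$ lifts to a unit of $R$. Combined with the standing hypothesis that $R$ is additively generated by $R^{\times}$ --- which is exactly the input needed for the symbol manipulations of \cite{BR} --- this places us in the setting to which their arguments apply.

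For part (1), I would first establish $K_2(n, R, \fa) \subseteq U_n(R, \fa)$ by the normal-form argument of \cite{BR}, patterned on Stein's treatment of the commutative case: using the relations (R1)--(R3) together with the commutation rules for the $w_{ij}$ and $h_{ij}$, one rewrites an arbitrary element of $\mathrm{St}_n(R, \fa)$ in a normal form that exhibits a ``symbol part'' lying in $U_n(R, \fa)$, and when the element lies in $\ker \pi_R$ the remaining factor is forced to be trivial. The hypothesis $\fa \subseteq \mathrm{Rad}(R)$ is what lets this be carried out directly for $n \geq 3$, without first passing to the stable group. For the exact sequence
$$
1 \to U_n(R, \fa) \to U_n(R) \to U_n(R/\fa) \to 1,
$$
left-exactness is the definition of $U_n(R, \fa)$, and surjectivity on the right follows by lifting the units $\bar u, \bar v$ appearing in a generator $c(\bar u, \bar v)$ of $U_n(R/\fa)$ to units of $R$; the real content is exactness in the middle, namely that $\ker\big(U_n(R) \to U_n(R/\fa)\big)$ is contained in $U_n(R, \fa)$, which is \cite{BR}, Corollary 2.11 and is proved there by analysing the relations satisfied by the symbols $c(u,v)$ modulo $\fa$.

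For part (2), assume $K_2(n, R/\fa) \subseteq U_n(R/\fa)$. Given $x \in K_2(n, R)$, its image $\bar x \in \mathrm{St}_n(R/\fa)$ lies in $K_2(n, R/\fa) \subseteq U_n(R/\fa)$, so by the surjectivity from part (1) I can choose $u \in U_n(R)$ with the same image as $x$ modulo $\fa$; then $x u^{-1} \in \mathrm{St}_n(R, \fa)$. The matrix $\pi_R(u)$ is diagonal of the form $\mathrm{diag}(w, 1, \dots, 1)$ with $w \in [R^{\times}, R^{\times}]$ reducing to $1$ modulo $\fa$; after correcting $u$ by a suitable element of $U_n(R, \fa)$ with the same image under $\pi_R$, I may assume $\pi_R(u) = 1$, i.e. $u \in K_2(n, R)$. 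Then $x u^{-1} \in K_2(n, R, \fa) \subseteq U_n(R, \fa)$ by part (1), so $x = (x u^{-1})\, u \in U_n(R)$, which gives $K_2(n, R) \subseteq U_n(R)$; the same choice of $u$ also shows that $K_2(n, R) \to K_2(n, R/\fa)$ is surjective. Together with the routine left-exactness and exactness in the middle of the sequence of $K_2$'s --- which follow from the definitions, using $K_2(n, R, \fa) = K_2(n,R) \cap \mathrm{St}_n(R, \fa)$ --- this yields
$$
1 \to K_2(n, R, \fa) \to K_2(n, R) \to K_2(n, R/\fa) \to 1.
$$

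The main obstacle, which in each instance is already settled in \cite{BR}, is twofold. First, the generation-by-symbols and exactness statements are cleanest in the stable range, so some care is needed to see that they persist for a fixed $n \geq 3$; here the hypothesis $\fa \subseteq \mathrm{Rad}(R)$, which puts us in a stable range as far as congruence phenomena are concerned, is essential. Second --- and this is the real sticking point in part (2) --- one must know that a diagonal matrix $\mathrm{diag}(w, 1, \dots, 1)$ with $w \in [R^{\times}, R^{\times}] \cap (1 + \fa)$ already lies in $\pi_R\big(U_n(R, \fa)\big)$, which amounts to expressing such a $w$ as a product of ``relative commutators'' $u(1+a)u^{-1}(1+a)^{-1}$; it is precisely here that the hypothesis that $R$ is additively generated by its units enters in an essential way in \cite{BR}.
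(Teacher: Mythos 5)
The paper offers no proof of this theorem: it is imported as a summary of Bak--Rehmann's Theorem 2.9 and Corollary 2.11, and your proposal ultimately defers exactly the same substantive steps (the inclusion $K_2(n,R,\fa)\subset U_n(R,\fa)$, exactness of the $U_n$-sequence, and the expression of commutators of units congruent to $1$ modulo $\fa$ by relative symbols) to that same source, so your treatment is essentially the paper's. The connecting arguments you do supply --- lifting units of $R/\fa$ to units of $R$ because $\fa\subset\mathrm{Rad}(R)$, correcting the lift $u\in U_n(R)$ by an element of $U_n(R,\fa)$ with the same image under $\pi_R$, and using $K_2(n,R,\fa)=K_2(n,R)\cap \mathrm{St}_n(R,\fa)$ for exactness in the middle --- are sound.
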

\noindent The theorem yields the following
\begin{prop}\label{KT:P-BR2}
Suppose that $R$ is either a finite-dimensional algebra over an algebraically closed field $K$ or a finite ring with $2 \in R^{\times}.$ Then $K_2 (n, R) \subset U_n(R)$, and consequently $K_2 (n,R)$ is a central subgroup of $\mathrm{St}_n (R).$
\end{prop}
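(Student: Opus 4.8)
The plan is to deduce the inclusion $K_2(n,R) \subseteq U_n(R)$ from Theorem \ref{KT:T-BR1}(2), applied with the two-sided ideal $\fa = \mathrm{Rad}(R)$; the assertion that $K_2(n,R)$ is central in $\mathrm{St}_n(R)$ is then automatic, since every element of $K_2(n,R) \cap U_n(R)$ lies in the center of $\mathrm{St}_n(R)$ (\cite{M}, Corollary 9.3). To invoke Theorem \ref{KT:T-BR1}(2) one must verify three things: that $\fa = \mathrm{Rad}(R)$ is contained in $\mathrm{Rad}(R)$ (clear), that $R$ is additively generated by $R^\times$, and that $K_2(n, \bar R) \subseteq U_n(\bar R)$ for $\bar R := R/\mathrm{Rad}(R)$.

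\textbf{Additive generation by units.} If $R$ is a finite-dimensional algebra over the algebraically closed (hence infinite) field $K$, then for $a \in R$ the element $a - \lambda\cdot 1$ is a unit for all but finitely many $\lambda \in K$: indeed $a - \lambda \cdot 1$ is a unit exactly when left multiplication by it is bijective, and the determinant of that operator is a nonzero polynomial in $\lambda$; hence $a = (a - \lambda\cdot 1) + \lambda\cdot 1$ is a sum of two units. If instead $R$ is finite with $2 \in R^\times$, then $\mathrm{Rad}(R)$ is nilpotent and $\bar R \cong \prod_i M_{m_i}(\F_{q_i})$; the hypothesis $2 \in R^\times$ forces each $q_i$ to be odd, hence $\geq 3$, and a short counting argument shows that over any field with at least three elements every square matrix is a sum of two invertible matrices. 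Since units lift along the nilpotent surjection $R \twoheadrightarrow \bar R$, it follows that $R$ is additively generated by $R^\times$ in this case as well.

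\textbf{The inclusion $K_2(n, \bar R) \subseteq U_n(\bar R)$.} The ring $\bar R$ is semisimple, and in both of our cases its simple components are matrix rings over \emph{commutative} fields: over the algebraically closed $K$ this is clear, and for $R$ finite it follows from Wedderburn's little theorem. Thus $\bar R \cong \prod_i M_{m_i}(k_i)$ with each $k_i$ a field. Since $\mathrm{St}_n(-)$, $E_n(-)$ — and therefore $K_2(n,-)$ and $U_n(-)$ — carry finite direct products of rings to direct products of groups, it suffices to prove $K_2(n, M_m(k)) \subseteq U_n(M_m(k))$ for $k$ a field. By Proposition \ref{KT:P-1}(b), the map $\tilde\psi \colon \mathrm{St}_n(M_m(k)) \to \mathrm{St}_{nm}(k)$ is an isomorphism carrying $K_2(n, M_m(k))$ onto $K_2(nm, k)$, and by Stein's theorem (\cite{St2}), since $nm \geq n \geq 3$, we have $K_2(nm, k) \subseteq U_{nm}(k)$. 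It remains to check that $\tilde\psi$ pulls $U_{nm}(k)$ back into $U_n(M_m(k))$. Using the explicit formula for $\tilde\psi$ together with the standard identities $w_{ij}(u)^{-1} = w_{ij}(-u)$ and $h_{ij}(1) = 1$, and the fact that Steinberg generators indexed by disjoint pairs of indices commute, one computes that $\tilde\psi$ sends $h_{12}(\mathrm{diag}(s,1,\dots,1))$ to $h_{1,\,m+1}(s)$ for every $s \in k^\times$ (the off-diagonal entries $\mathrm{diag}(s,1,\dots,1)$ contribute the extra factors $h_{k,\,m+k}(1) = 1$, $k \geq 2$, which disappear). Consequently $\tilde\psi$ sends the generator $c(\mathrm{diag}(s,1,\dots,1), \mathrm{diag}(t,1,\dots,1))$ of $U_n(M_m(k))$ to $c(s,t)$, using that in the commutative ring $k$ the symbol $c(s,t)$ does not depend on the chosen index pair. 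Hence every generator of $U_{nm}(k)$ lies in $\tilde\psi(U_n(M_m(k)))$, so $K_2(nm,k) \subseteq \tilde\psi(U_n(M_m(k)))$, and applying $\tilde\psi^{-1}$ gives $K_2(n, M_m(k)) \subseteq U_n(M_m(k))$, as required.

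I expect the main obstacle to be precisely this last step — the compatibility of the symbol subgroups $U_n$ with the Morita-type isomorphism of Proposition \ref{KT:P-1}(b), which that proposition establishes only at the level of $K_2$. Making it precise forces one to trace the (somewhat involved) definition of $\tilde\psi$ through the $w$- and $h$-symbols, and although the resulting computation is elementary, the bookkeeping is the delicate point of the argument. A secondary point requiring care is the additive-generation hypothesis in the finite case, which genuinely uses $2 \in R^\times$ to rule out residue fields of order $2$.
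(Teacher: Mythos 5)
Your proof is correct and follows essentially the same route as the paper: apply the Bak--Rehmann theorem (Theorem \ref{KT:T-BR1}) with $\fa = \mathrm{Rad}(R)$, verify additive generation by units, reduce $R/\mathrm{Rad}(R)$ to matrix rings over fields and push the symbol subgroups through the isomorphism of Proposition \ref{KT:P-1}(b) via the computation $\tilde\psi(h_{12}(t_u)) = h_{1,m+1}(u)$, then conclude centrality from (\cite{M}, Corollary 9.3). The only (harmless) deviation is that for the finite-dimensional algebra case you prove additive generation by units directly with the ``$a-\lambda\cdot 1$ is a unit for almost all $\lambda$'' argument, where the paper instead cites the algebraic-ring result (\cite{IR}, Corollary 2.5).
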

\begin{proof}
Let $J = \mathrm{Rad}(R)$ be the Jacobson radical of $R$. To apply Theorem \ref{KT:T-BR1}, we need to verify that in both cases,
$R$ is additively generated by its units and that
$K_2 (n, R/J) \subset U_n (R/J).$

If $R$ is a finite-dimensional algebra over $K$, then we can view $R$ as a connected algebraic ring over $K$, and it follows from (\cite{IR}, Corollary 2.5) that $R$ is generated by $R^{\times}$.\footnotemark \footnotetext{All of the background on algebraic rings needed in this paper can be found in \cite{IR}, \S2. M.~Kassabov has also informed us that the notion of an algebraic ring actually goes back to Greenberg's paper \cite{Gr}, where one can find proofs of some basic properties.} Now suppose that $R$ is a finite ring. Since $R$ is obviously artinian,
$R/J$ is semisimple (\cite{L}, Theorem 4.14), so by the Artin-Wedderburn Theorem (\cite{L}, Theorem 3.5) and the fact that finite division rings are commutative (\cite{L}, Theorem 13.1), we have
$$
R/J \simeq M_{n_1} (F_1) \oplus \cdots \oplus M_{n_r} (F_r),
$$
where $F_1, \dots, F_r$ are finite fields, with $F_i \neq \mathbb{F}_2$, the field of two elements, for all $i$ as $2 \in R^{\times}.$
It follows that $R/J$ is additively generated by its units. On the other hand, the canonical map $R \to R/J$ induces a surjective homomorphism $R^{\times} \to (R/J)^{\times},$ which, combined with the
the fact that $J$ lies in the linear span of $R^{\times}$ (cf. \cite{L}, Lemma 4.3), yields that $R$ is additively generated by $R^{\times}.$

Next, let us show that $K_2 (n, R/J) \subset U_n (R/J)$ in both cases. If $R$ is a finite-dimensional $K$-algebra, then as above, $R/J$ is semisimple. So, since there are no nontrivial division algebras over algebraically closed fields, the Artin-Wedderburn Theorem implies that
$$
R/J \simeq M_{n_1} (K) \oplus \cdots \oplus M_{n_s} (K).
$$
Thus, in both cases, $R/J$ is a direct sum of matrix algebras over fields.
Since $K_2$ commutes with finite direct sums, we may assume without loss of generality that $A := R/J \simeq M_m (F)$, with $F$ a field. By Proposition \ref{KT:P-1}, we have isomorphisms $\tilde{\psi} \colon \mathrm{St}_n (A) \to \mathrm{St}_{nm} (F)$ and $\psi \colon E_n (A) \to E_{nm} (F)$ that induce an isomorphism $K_2 (n, A) \simeq K_2 (nm, F).$ Now let $u \in F^{\times}$ and $t_u = \mathrm{diag}(u, 1, \dots, 1) \in M_m (F).$ By direct computation, one checks that
$$
\tilde{\psi} (h_{12}^A (t_u)) = h_{1, m+1}^F (u),
$$
and therefore, for $u, v \in F^{\times},$ we have
$$
\tilde{\psi} (c(t_u, t_v)) = c_{1, m+1} (u,v),
$$
where $c_{1, m+1} (u,v) = h_{1, m+1}^F (u) h_{1,m+1}^F (v) h_{1, m+1}^F (vu)^{-1}.$ On the other hand, by Matsumoto's theorem,
the group $K_2 (nm, F)$ is generated by the Steinberg symbols $c_{1, m+1} (u,v)$ (cf. \cite{Stb1}); consequently, we see that
$K_2 (n, R/J) \subset U_n (R/J)$, as claimed. Hence, $K_2 (n, R) \subset U_n (R)$
by Theorem \ref{KT:T-BR1}. As we noted above, it now follows from (\cite{M}, Corollary 9.3) that $K_2 (n,R)$ lies in the center of $\mathrm{St}_n (R).$

\end{proof}

An important ingredient in the proof of Theorem 1 will be the following proposition.


\begin{prop}\label{KT:P-DA}
Let $k$ and $K$ be fields of characteristic 0, with $K$ algebraically closed. Suppose that $D$ is a finite-dimensional central division algebra over $k,$ $A$ a finite-dimensional algebra over $K$, and $f \colon D \to A$ a ring homomorphism with Zariski-dense image. Then for $n \geq 3,$ $K_2 (n,A)$ coincides with the subgroup
$$
U'_n (A) = < c(u,v) \mid u,v \in \overline{f(L^{\times})} >,
$$
where $L$ is an arbitrary maximal subfield of $D$.
\end{prop}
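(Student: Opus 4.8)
The plan is to reduce everything to the commutative case via Proposition \ref{KT:P-1}(b), and then invoke Matsumoto's theorem together with the Bak–Rehmann machinery (Theorem \ref{KT:T-BR1}) to identify the generators of $K_2$ explicitly. First, since $f$ has Zariski-dense image and $D$ is a finite-dimensional central division algebra, the closure $A' = \overline{f(D)}$ is a finite-dimensional $K$-subalgebra of $A$; after replacing $A$ by $A'$ we may assume $f(D)$ is dense in $A$. The key structural input is that $A$ is generated as an algebra by the images of the maximal subfields of $D$: indeed, $D$ is spanned over $k$ by $L$ for a single maximal subfield $L$ together with the Galois conjugates of $L$ (or more simply, $D = \sum L_i$ as $L_i$ ranges over maximal subfields, since every element of $D$ lies in some maximal subfield when $\mathrm{char}\, k = 0$). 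So $f(D)$, hence $A = \overline{f(D)}$, is the $K$-span of the subalgebras $\overline{f(L_i^\times)}$, and all maximal subfields are conjugate by Skolem–Noether, so one may fix one $L$.

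Next I would apply Proposition \ref{KT:P-BR2}: $A$ is a finite-dimensional algebra over the algebraically closed field $K$, so $K_2(n,A) \subset U_n(A)$ and is central in $\mathrm{St}_n(A)$. The containment $U'_n(A) \subset K_2(n,A)$ needs to be checked — one must verify that each $c(u,v)$ with $u,v \in \overline{f(L^\times)}$ actually lies in the kernel of $\pi_A$, i.e. that $\pi_A(c(u,v)) = 1$. Since $\overline{f(L^\times)}$ is a commutative (connected) algebraic group inside $A^\times$ and $u,v$ commute there, $\pi_A(c(u,v))$ is the diagonal matrix with first entry $uvu^{-1}v^{-1} = 1$; so $c(u,v) \in K_2(n,A)$, and being in $U_n(A)$ it is central. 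This gives $U'_n(A) \subseteq K_2(n,A)$.

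For the reverse inclusion $K_2(n,A) \subseteq U'_n(A)$, I would decompose $A = \bigoplus M_{m_t}(K)$ via Artin–Wedderburn (no division algebras over $K$ algebraically closed), use that $K_2$ commutes with finite direct sums, and reduce to $A = M_m(K)$. Then Proposition \ref{KT:P-1}(b) gives $K_2(n, M_m(K)) \simeq K_2(nm, K)$, and Matsumoto's theorem says $K_2(nm,K)$ is generated by Steinberg symbols $c_{1,m+1}(u,v)$, $u,v \in K^\times$; pulling these back through $\tilde\psi$ as in the proof of Proposition \ref{KT:P-BR2} they are of the form $c(t_u, t_v)$ with $t_u = \mathrm{diag}(u,1,\dots,1) \in M_m(K)$. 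The remaining point — which I expect to be the main obstacle — is to show these diagonal-type symbols can be rewritten as symbols $c(u',v')$ with $u', v'$ in the closure $\overline{f(L^\times)}$ of the image of a maximal subfield, rather than merely in a maximal torus of $A^\times$. Here one uses that $\overline{f(L^\times)}$ is a maximal torus (or contains one after conjugation): $L$ is a maximal subfield of $D$, so $f(L)$ has the right dimension, and $\overline{f(L^\times)}$ is a connected commutative subgroup of $A^\times$ of full rank; since all maximal tori of $A^\times = \prod GL_{m_t}(K)$ are conjugate and the $c(u,v)$ are compatible with the conjugation action of $\mathrm{St}_n(A)$ (conjugating $c(u,v)$ by $w$-elements permutes torus coordinates and changes the symbol only inside $U_n$), one concludes every generator of $K_2(n,A)$ from Matsumoto's list lies in $U'_n(A)$. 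Assembling the two inclusions yields $K_2(n,A) = U'_n(A)$, independently of the choice of maximal subfield $L$ by Skolem–Noether.
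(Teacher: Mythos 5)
There is a genuine gap, and it occurs exactly at the step you flag as routine: the reduction of $A$ to a direct sum of matrix algebras $M_{m_t}(K)$ via Artin--Wedderburn. The algebra $A=\overline{f(D)}$ need not be semisimple: the Zariski closure of the image of a characteristic-zero field (or division algebra) can have a nonzero Jacobson radical, e.g.\ $k\to K[\varepsilon]$, $x\mapsto x+\delta(x)\varepsilon$ for a nontrivial derivation $\delta$ gives dense image in the dual numbers, and correspondingly $A\simeq M_s(K[\varepsilon])$ can occur. (This non-reduced phenomenon is the whole point of the deformation-theoretic part of the paper.) For such $A$ one has $K_2(n,A)\simeq K_2(ns,K[\varepsilon])$, which strictly contains the part coming from $K_2(ns,K)$ because of the relative $K_2$ of the nilpotent ideal; so passing to the semisimple quotient and quoting Matsumoto's theorem for fields misses generators, and your Bak--Rehmann citation is never actually used to recover them (and if you tried, Theorem \ref{KT:T-BR1} only gives symbols $c(u,1+a)$ with $u\in A^\times$ arbitrary, which are not visibly in $U'_n(A)$). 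The paper's route is different precisely here: Lemma \ref{KT:L-DA} shows $A\simeq D\otimes_k C\simeq M_s(C)$ with $C=\overline{f(k)}$ a commutative, semilocal, possibly non-reduced finite-dimensional $K$-algebra, with the isomorphism chosen so that $\overline{f(L)}\simeq L\otimes_k C\simeq D_s(C)$ is the full diagonal subring; then Proposition \ref{KT:P-1}(b) identifies $K_2(n,A)\simeq K_2(ns,C)$, and Stein's theorem for semilocal commutative rings (not Matsumoto for fields) says $K_2(ns,C)$ is generated by symbols $c_{1,s+1}(u,v)$ with $u,v\in C^\times$, which pull back to $c(t_u,t_v)$ with $t_u=\mathrm{diag}(u,1,\dots,1)$ lying in the image of $L\otimes_k C$, i.e.\ in $\overline{f(L^\times)}$.

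Two secondary problems: your claim that all maximal subfields of $D$ are conjugate by Skolem--Noether is false (non-isomorphic maximal subfields exist already for quaternion algebras; Skolem--Noether only conjugates isomorphic embeddings), though the proposition only needs one fixed $L$; and the final step, rewriting the diagonal symbols as symbols with entries in $\overline{f(L^\times)}$ by appealing to conjugacy of maximal tori and the assertion that conjugation ``changes the symbol only inside $U_n$,'' is not justified -- the paper avoids any such conjugation argument by building the isomorphism $A\simeq M_s(C)$ in Lemma \ref{KT:L-DA} so that $\overline{f(L)}$ is literally the diagonal subring from the start. Your verification that $U'_n(A)\subset K_2(n,A)$ is fine and agrees with the paper.
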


We begin with

\begin{lemma}\label{KT:L-DA}
Let $A,$ $D$, and $f$ be as above, and set
$C = \overline{f(k)}$ (Zariski closure). Then
\begin{equation}\label{E-LDA}
A \simeq D \otimes_k C \simeq M_{s} (C)
\end{equation} 
as $K$-algebras, where $s^2 = \dim_k D.$ Moreover, if $L$ is any maximal subfield of $D$, then the second isomorphism can be chosen so that $L \otimes_k C \simeq D_s (C)$, where $D_s (C) \subset M_s (C)$ is the subring of diagonal matrices.
\end{lemma}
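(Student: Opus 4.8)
The plan is to obtain the first isomorphism in \eqref{E-LDA} from the natural multiplication map out of $D\otimes_k C$, and the second by splitting $D\otimes_k C$ over the Artinian ring $C$. To begin, $f$ is injective because $D$ is a division ring and $f(1)=1$, so we may regard $D$ as a subring of $A$. The set $f(k)$ is commutative and contains $f(\Q)=\Q\cdot 1$, whose Zariski closure is $K\cdot 1$; hence $C=\overline{f(k)}$ is a commutative subring of $A$ containing $K\cdot 1$, i.e.\ a finite-dimensional commutative $K$-subalgebra of $A$. Moreover, $f(k)$ commutes with $f(D)$, so $C$ is contained in the centralizer of $f(D)$ in $A$; since this centralizer is Zariski-closed and $f(D)$ is dense, in fact $C\subseteq Z(A)$. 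Finally, $f\vert_k\colon k\to C$ makes $C$ a $k$-algebra, and, as $C$ is central in $A$, the rule $d\otimes c\mapsto f(d)c$ defines a homomorphism of $K$-algebras $\mu\colon D\otimes_k C\to A$.

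I would then show that $\mu$ is an isomorphism. Since $D$ is central simple over $k$, every two-sided ideal of $D\otimes_k C$ is of the form $D\otimes_k I$ for an ideal $I\subseteq C$; applying this to $\ker\mu$ and intersecting with $1\otimes C$, on which $\mu$ restricts to the inclusion $C\hookrightarrow A$, forces $I=0$, so $\mu$ is injective. For surjectivity, a $k$-basis of $D$ exhibits $D\otimes_k C$ as a $K$-vector space of dimension $s^2\cdot\dim_K C<\infty$, so the image of the $K$-linear map $\mu$ is a finite-dimensional $K$-subspace of $A$, hence Zariski-closed; being a closed subring of $A$ containing $f(D)$, it must equal $\overline{f(D)}=A$. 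Thus $A\cong D\otimes_k C$.

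For the second isomorphism I would decompose the Artinian ring $C$ as a finite product $\prod_i C_i$ of local $K$-algebras, each with residue field $K$ (as $K$ is algebraically closed) and nilpotent maximal ideal $\mathfrak{m}_i$, so that $D\otimes_k C\cong\prod_i(D\otimes_k C_i)$ and it suffices to treat one factor. Since $\mathfrak{m}_i$ is central and nilpotent in $D\otimes_k C_i$, reduction modulo $\mathfrak{m}_i$ is a surjection onto $D\otimes_k K\cong M_s(K)$ — here using that the algebraically closed field $K$, which is an extension of $k$ via $f$, splits $D$ — with nilpotent kernel. Lifting a full system of matrix units of $M_s(K)$ through this surjection (first lifting the diagonal orthogonal idempotents, then normalizing the off-diagonal lifts by Nakayama) embeds $M_s(C_i)$ into $D\otimes_k C_i$, and a rank count over the local ring $C_i$ shows this inclusion is an equality; equivalently, one may invoke the vanishing of the Brauer group of an Artinian local ring with algebraically closed residue field. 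Reassembling the factors gives $D\otimes_k C\cong\prod_i M_s(C_i)\cong M_s(C)$.

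It remains to deal with a maximal subfield $L$ of $D$. As $\mathrm{char}\,k=0$, we may write $L=k[X]/(p(X))$ with $p$ separable of degree $s=\sqrt{\dim_k D}$; since $K\subseteq C$ contains the (distinct) roots of $p$, the Chinese Remainder Theorem yields $L\otimes_k C\cong C^s$. Let $f_1,\dots,f_s$ be the corresponding complete orthogonal system of idempotents. Under the isomorphism $D\otimes_k C\cong M_s(C)$ constructed above, the $f_j$ become orthogonal idempotents summing to $1$, and reduction modulo each $\mathfrak{m}_i$ shows that each $f_j$ has rank $1$ over $C_i$. A complete orthogonal system of rank-one idempotents in $M_s(C_i)$ is conjugate, by an element of $GL_s(C_i)$, to the standard system $\{e_{11},\dots,e_{ss}\}$ (choose a basis of the free module $C_i^s$ adapted to the rank-one summands the $f_j$ determine); carrying this out in every factor produces $g\in GL_s(C)$ with $gf_jg^{-1}=e_{jj}$, and composing the earlier isomorphism with conjugation by $g$ then maps $L\otimes_k C$, which is generated over $C$ by the $f_j$, onto the diagonal subalgebra $D_s(C)$. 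I expect the main obstacle to be the splitting $D\otimes_k C\cong M_s(C)$ over the possibly non-reduced ring $C$: it requires combining the fact that $K$ splits $D$ with an idempotent-lifting (or Brauer-group) argument over the local Artinian factors, together with enough control of the resulting isomorphism to diagonalize the \'etale subalgebra $L\otimes_k C$.
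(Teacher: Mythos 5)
Your proof is correct, and for the first isomorphism it is the paper's argument verbatim: the same map $d\otimes c\mapsto f(d)c$, injectivity via the ideal structure of $D\otimes_k C$ as a central simple algebra tensored with a commutative ring, and surjectivity by playing Zariski-density of the image against its closedness as a finite-dimensional $K$-subspace (you supply the extra observation that $C$ centralizes $f(D)$, which the paper leaves to a citation on the algebraic-ring structure). For the second isomorphism the overall strategy is also the same — decompose $C$ into local Artinian factors, reduce modulo the radical to $D\otimes_k K\simeq M_s(K)$, and lift the splitting — but you implement the lift by lifting matrix units (or quoting triviality of the Brauer group of a local Artinian ring with algebraically closed residue field), whereas the paper invokes the Wedderburn--Malcev theorem to get a section $M_s(K)\hookrightarrow D\otimes_k C$ and tensors up over $C$; these are interchangeable. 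The genuine divergence is in the final statement about $L$: the paper arranges the splitting $\tau$ so that $\tau(L\otimes_k K)=D_s(K)$ and then asserts that the resulting isomorphism carries $L\otimes_k C$ to $D_s(C)$, which tacitly requires the Wedderburn--Malcev section to be compatible with the subalgebra $L\otimes_k C$; you instead diagonalize a posteriori, by noting $L\otimes_k C\simeq C^s$ via CRT and conjugating the resulting complete system of rank-one idempotents of $M_s(C)$ into the standard diagonal ones over each local factor. Your version is more self-contained and makes the "can be chosen" clause of the lemma explicit, at the cost of the extra idempotent bookkeeping; the paper's version is shorter but leans harder on how the section is chosen.
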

\begin{proof}
We start with the proof of the first isomorphism in (\ref{E-LDA}).
To begin, we note that since $k$ and $K$ are both fields of characteristic 0,
$C$ is a finite-dimensional algebra over $K$ by (\cite{IR}, Lemma 2.13 and Proposition 2.14). Moreover, by (\cite{Gr}, Proposition 5.1), the natural inclusion $C \hookrightarrow A$ is a homomorphism of $K$-algebras (this also follows from the proof of \cite{IR}, Proposition 2.14).
Now consider the map
$$
\theta \colon D \otimes_k C \to A,  \ \ \ (x,c) \mapsto f(x) c.
$$
We claim that $\theta$ is an isomorphism. From the above remark, it is clear that $\theta$ is a homomorphism of $K$-algebras (where $D \otimes_k C$ is endowed with the natural $K$-algebra structure coming from $C$).
For surjectivity, first note that since $\mathrm{im} \ \theta$ contains $f(D)$, it is Zariski-dense in $A$.
On the other hand, let $x_1, \dots, x_{s^2}$ be a basis of $D$ over $k.$ Then
$$
\mathrm{im} \ \theta = f(x_1) C + \dots + f(x_{s^2}) C,
$$
and therefore is closed. Hence, $\theta$ is surjective. To prove injectivity, notice that
since
$D$ is a central simple algebra, $\ker \theta = D \otimes_k \mathfrak{c}$ for some ideal $\mathfrak{c} \subset C$ (see \cite{DF}, Theorem 3.5). On the other hand, $f(1_D) = 1_A$ (as $f$ is a ring homomorphism), so $\mathfrak{c} = 0$, and $\theta$ is injective.

Now let us consider the second isomorphism. First, since $C$ is a commutative artinian algebraic ring, by (\cite{IR}, Proposition 2.20) we can write
$$
C = C_1 \times \cdots \times C_r,
$$
where each $C_i$ is a local commutative algebraic ring. Moreover, since tensor products commute with finite products and $M_s (C_1 \times \cdots \times C_r) = M_s (C_1) \times \cdots \times M_s (C_r)$, it suffices to establish the isomorphism when $C$ is a local algebraic ring. So, suppose that is the case and let $J(C)$ be the Jacobson radical of $C$. Then it follows from (\cite{IR}, Corollary 2.6 and Proposition 2.19) that $C/J(C) \simeq K$, so composing $f$ with the canonical map $C \to C/J(C)$, we obtain an embedding $k \hookrightarrow K.$ Consequently, as $K$ is algebraically closed, the division algebra $D$ splits over $K$, i.e. there exists an isomorphism
\begin{equation}\label{E-DA}
\tau \colon D \otimes_k K \stackrel{\sim}{\longrightarrow} M_s (K).
\end{equation}
Notice also that if $L$ is a maximal subfield of $D$, we can choose $\tau$ so that $L \otimes_k K \simeq D_s (K).$ Indeed, since $L$ is separable over $k$ (as char $k = 0$) and $[L:k] = s,$ we can write $L = k[X]/(f)$, where $f$ is a separable polynomial of degree $s$. Then by the Chinese Remainder Theorem, $L \otimes_k K \simeq K^s.$ But any subalgebra of $M_s (K)$ which is isomorphic to $K^s$ is conjugate to $D_s (K)$ (\cite{GS}, Lemma 2.2.9), so it follows that $\tau$ can be composed with an inner automorphism of $M_s (K)$ to have the required form.

Now consider the natural (surjective) map
$$
D \otimes_k C \to D \otimes_k (C/J(C)) = D \otimes_k K.
$$
Since $D$ is a central simple algebra, the same argument as above shows that the kernel of this map is contained in the Jacobson radical $J(D \otimes_k C)$, and the fact that $D \otimes_k K \simeq M_s (K)$ is semisimple implies that it actually coincides with $J(D \otimes_k C).$ So, by the Wedderburn-Malcev theorem (cf. \cite{P}, Corollary 11.6), there exists a section
$$
\alpha \colon M_s (K) \simeq D \otimes_k K \hookrightarrow D \otimes_k C.
$$
We claim that the map
$$
\beta \colon M_s (K) \otimes_K C \to D \otimes_k C, \ \ \ m \otimes c \mapsto \alpha(m) \cdot (1 \otimes c)
$$
gives the required isomorphism. Indeed, injectivity is proved by the same argument as above, and surjectivity follows by dimension count. Thus, $M_s (C) \simeq M_s (K) \otimes_K C \simeq D \otimes_k C,$ and it follows immediately from the above remarks that $D_s (C) \simeq L \otimes_k C.$

\end{proof}

\vskip2mm

\noindent {\it Proof of Proposition \ref{KT:P-DA}}.
By Lemma \ref{KT:L-DA}, we have
$L \otimes_k C \simeq D_s (C).$ Moreover,
$L \otimes_k C \simeq \overline{f(L)}$. Indeed, since $k \subset L$, we have
$$
f(L) \subset \theta (L \otimes_k C) \subset \overline{f(L)}.
$$
On the other hand, the same argument as in the proof of Lemma \ref{KT:L-DA}
shows that $\theta (L \otimes_k C)$ is closed.

Next, since $A \simeq M_s (C)$ and $C$ is a finite-dimensional $K$-algebra, by Proposition \ref{KT:P-1} there exists an isomorphism $\tilde{\psi} \colon \mathrm{St}_n(A) \to \mathrm{St}_{ns}(C)$ that induces an isomorphism $K_2 (n,A) \simeq K_2 (ns, C).$ Now, $C$ is a semilocal commutative ring which is additively generated by its units, so by (\cite{St2}, Theorem 2.13), $K_2 (ns, C)$
coincides with the subgroup $U_{ns} (C)$ of $\mathrm{St}_{ns}(C)$
generated by the Steinberg symbols $c_{1,s+1}(u,v)$ taken with respect to the root $\alpha_{1,s+1}$ (i.e. $c_{1,s+1} (u,v) = h_{1,s+1} (u) h_{1,s+1}(v) h_{1,s+1} (vu)^{-1}$). As we noted in the proof of Proposition \ref{KT:P-BR2}, we have
$$
\tilde{\psi}(c(t_u, t_v)) = c_{1, s+1}(u,v),
$$
where for $u \in C^{\times}$, we set $t_u = {\rm diag}(u,1, \dots, 1) \in M_s (C).$
Thus, $K_2 (n, A)$ is contained in the group generated by the symbols $c(t_u, t_v)$. On the other hand, since all of the $t_u$ diagonal matrices, they lie in the image of $L \otimes_k C$, hence $K_2 (n,A) \subset U'_n (A).$
Since clearly $U'_n (A) \subset K_2 (n,A)$, this concludes the proof.
\hfill $\Box$

\section{Abstract homomorphisms over non-commutative rings}\label{S:NC}

The main goal of this section is to give the proof of Theorem 1.
Before beginning the argument, we would like to give an alternative statement of Theorem 1, which can be generalized (in a somewhat weaker form) to (essentially) arbitrary associative rings. 
First, we need to observe that if $B$ is a finite-dimensional algebra over an algebraically closed field $K$, then the elementary group $E_n (B)$ has the structure of a connected algebraic $K$-group. Indeed, using the regular representation of $B$ over $K$, it is easy to see that $GL_n (B)$ is a Zariski-open subset of $M_n (B),$ and hence an algebraic group over $K$. Now let us
view $B$ as a connected algebraic ring over $K$, and for $i, j \in \{1, \dots, n \},$ $i \neq j$, consider the regular maps
$$
\varphi_{ij} \colon B \to GL_n (B), \ \ \ b \mapsto e_{ij} (b)
$$
Set $W_{ij} = \mathrm{im} \ \varphi_{ij}.$ Then each $W_{ij}$ contains the identity matrix $I_n \in GL_n (B)$, and by definition $E_n (B)$ is generated by the $W_{ij}.$ So, $E_n (B)$ is a connected algebraic group by (\cite{Bo}, Proposition 2.2).

\begin{thm}\label{NC:T-NonComm1}
Suppose $k$ and $K$ are fields of characteristic 0, with $K$ algebraically closed, $D$ is a finite-dimensional central division algebra over $k$, and $n$ is an integer $\geq 3.$ Let $\rho \colon E_n (D) \to GL_m (K)$ be a finite-dimensional linear representation and set $H = \overline{\rho (E_n (D))}$ (Zariski closure). Then there exists a finite-dimensional associative $K$-algebra $\B$, a ring homomorphism $f \colon D \to \B$ with Zariski-dense image, and a morphism $\sigma \colon E_n (\B) \to H$ of algebraic $K$-groups such that
$$
\rho = \sigma \circ F,
$$
where $F \colon E_n (D) \to E_n (\B)$ is the group homomorphism induced by $f.$
\end{thm}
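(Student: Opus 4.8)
\emph{Proof proposal.} The plan is to follow the structure of the proof of the Main Theorem of \cite{IR}, replacing the $K$-theoretic input of Stein by Propositions~\ref{KT:P-BR2} and~\ref{KT:P-DA}. Write $\Phi = A_{n-1}$ for the root system of $E_n(D)$, with root subgroups $U_{ij} = \{ e_{ij}(r) \mid r \in D \} \cong (D,+)$ for $i \neq j$; since $n \geq 3$, every such pair participates in the relations (R1)--(R3). It will be convenient to work with the lift $\hat\rho := \rho \circ \pi_D \colon \mathrm{St}_n(D) \to H$ of $\rho$ along the canonical surjection $\pi_D \colon \mathrm{St}_n(D) \to E_n(D)$.

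\emph{Construction of $\B$ and $f$.} As in \cite{IR}, and using only the commutator relations together with the hypothesis that $\Phi$ has rank $\geq 2$ (so that each $U_{ij}$ is generated by commutators via (R3)), one first shows that for every $i \neq j$ the Zariski closure $G_{ij} := \overline{\rho(U_{ij})}$ is a connected \emph{unipotent} commutative $K$-subgroup of $H$. As $\mathrm{char}\,K = 0$, the exponential identifies $G_{ij}$ with its Lie algebra $\g_{ij}$, and $\rho|_{U_{ij}}$ then yields an additive map $\psi_{ij} \colon D \to \g_{ij}$ with Zariski-dense image, characterized by $\rho(e_{ij}(r)) = \exp(\psi_{ij}(r))$. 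Differentiating (R3), $[e_{ij}(r), e_{jl}(s)] = e_{il}(rs)$, and invoking the theory of algebraic rings from \cite{IR}, \S2 --- in particular the fact that in characteristic $0$ the relevant Zariski closures are \emph{finite-dimensional} $K$-algebras --- one produces, compatibly over all triples $(i,j,l)$ of distinct indices, a finite-dimensional associative $K$-algebra $\B$, a ring homomorphism $f \colon D \to \B$ with Zariski-dense image, and $K$-linear isomorphisms $\bar\psi_{ij} \colon \B \to \g_{ij}$ with $\bar\psi_{ij} \circ f = \psi_{ij}$; note $f$ is injective since $D$ is simple. Because $D$ is a finite-dimensional central division algebra over $k$, Lemma~\ref{KT:L-DA} now gives $\B \simeq D \otimes_k C \simeq M_s(C)$ with $C = \overline{f(k)}$ a finite-dimensional commutative $K$-algebra and $s^2 = \dim_k D$, so that Proposition~\ref{KT:P-DA} is applicable with $A = \B$.

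\emph{Assembling $\sigma$.} Define $\tilde\sigma \colon \mathrm{St}_n(\B) \to H$ on generators by $x_{ij}(b) \mapsto \exp(\bar\psi_{ij}(b))$. The Steinberg relations are preserved: (R1) holds because $\bar\psi_{ij}$ is linear and $G_{ij}$ is abelian; (R2) holds because $\rho(U_{ij})$ and $\rho(U_{kl})$ commute in $H$ for $i \neq l$, $j \neq k$; and (R3) holds because the two sides of $[\exp(\bar\psi_{ij}(b)), \exp(\bar\psi_{jl}(b'))] = \exp(\bar\psi_{il}(bb'))$ are morphisms of varieties in $(b,b')$ that agree on the Zariski-dense subset $f(D) \times f(D) \subset \B \times \B$ by (R3) in $E_n(D)$. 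Thus $\tilde\sigma$ is a well-defined homomorphism, and by construction $\tilde\sigma \circ \tilde F = \hat\rho$, where $\tilde F \colon \mathrm{St}_n(D) \to \mathrm{St}_n(\B)$ is induced by $f$. To see that $\tilde\sigma$ factors through $\pi_{\B}$, it suffices to show that $\tilde\sigma$ kills $K_2(n,\B)$. By Proposition~\ref{KT:P-DA}, $K_2(n,\B)$ is generated by the symbols $c(u,v)$ with $u,v \in \overline{f(L^\times)}$ for a fixed maximal subfield $L \subset D$; and $(u,v) \mapsto \tilde\sigma(c(u,v))$ is a morphism on the algebraic group $\overline{f(L^\times)} \times \overline{f(L^\times)}$ (inversion being regular on the closed subgroup $\overline{f(L^\times)}$ of $\B^\times$) which on the dense subset $f(L^\times) \times f(L^\times)$ equals $\rho(\pi_D(c(u_0,v_0)))$ for $u = f(u_0)$, $v = f(v_0)$, since $\tilde F(c(u_0,v_0)) = c(f(u_0),f(v_0))$ and $\tilde\sigma \circ \tilde F = \hat\rho$. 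As $L$ is commutative, $\pi_D(c(u_0,v_0)) = \mathrm{diag}(u_0 v_0 u_0^{-1} v_0^{-1}, 1, \dots, 1) = 1$, whence $\tilde\sigma(c(u,v)) = 1$ throughout, and $\tilde\sigma$ descends to a homomorphism $\sigma \colon E_n(\B) \to H$.

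\emph{Conclusion and main difficulty.} The map $\sigma$ is a morphism of algebraic groups: $E_n(\B)$ is a connected algebraic $K$-group generated by the root subgroups $\{e_{ij}(b)\}$ (as observed before the statement of the theorem), and $\sigma$ restricts on each of them to the morphism $b \mapsto \exp(\bar\psi_{ij}(b))$, so $\sigma$ is a rational, hence everywhere-regular, homomorphism. On the generators $e_{ij}(r)$ of $E_n(D)$ one has $\sigma(F(e_{ij}(r))) = \exp(\bar\psi_{ij}(f(r))) = \exp(\psi_{ij}(r)) = \rho(e_{ij}(r))$, so $\rho = \sigma \circ F$; and $\sigma(E_n(\B))$ is a closed subgroup of $H$ containing the dense subgroup $\rho(E_n(D))$, hence equals $H$. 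The genuinely new point compared with the reductive case of \cite{BT} is the descent of $\tilde\sigma$ through $K_2(n,\B)$, which rests on the noncommutative $K_2$-computations of \S\ref{S:KT}; those being already in hand, the main labor in the proof lies in the first step --- producing $\B$ and $f$ from the additive data $\psi_{ij}$ and the commutator relations and verifying the compatibilities over all triples $(i,j,l)$ (this is where $n \geq 3$ is essential) --- which is carried out exactly as in \cite{IR}, the only change being that its output is fed into Lemma~\ref{KT:L-DA} and Proposition~\ref{KT:P-DA} rather than into Stein's theorem.
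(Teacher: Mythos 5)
Your overall route is the paper's: attach a ring to $\rho$ via closures of root subgroups, lift to $\mathrm{St}_n(\B)$, kill $K_2(n,\B)$ using Proposition \ref{KT:P-DA} together with the commutativity of a maximal subfield $L\subset D$ and a Zariski-density argument, descend to $E_n(\B)$, and then prove rationality; the middle portion (the Steinberg lift and the vanishing on the symbols $c(u,v)$) matches Propositions \ref{NC:P-St1} and \ref{NC:P-SL} essentially verbatim. However, two of your steps are asserted rather than proved, and neither goes through as stated. First, your opening claim that each $G_{ij}=\overline{\rho(U_{ij})}$ is connected and \emph{unipotent} ``using only the commutator relations'' is unsupported: an abstract additive homomorphism with Zariski-dense image can perfectly well land in a torus (e.g. $r\mapsto e^{r}$ sends $(\Q,+)$ into $\C^{\times}$ with dense image), and the relations (R1)--(R3) by themselves do not obviously exclude a toral or finite part. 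In the paper these facts are \emph{consequences} of the algebraic-ring structure, not inputs: one first makes $A=\overline{\rho(e_{13}(D))}$ into an algebraic ring with operations induced from $H$ (Proposition \ref{NC:P-AR}), then gets connectedness because $A^{\circ}$ is a two-sided ideal and $D$ has no proper finite-index ideals (Remark 3.4), and only then, by (\cite{IR}, Proposition 2.14), that $A$ is a finite-dimensional $K$-algebra, so that its additive group is a vector group. Your construction inverts this order (unipotence and the exponential come before the ring structure), which makes the first step circular as written; since you invoke \cite{IR}, \S 2 anyway, the fix is simply to build $A$ and the maps $\psi_{ij}$ directly as in Proposition \ref{NC:P-AR} and drop the Lie-algebra detour, after which Lemma \ref{KT:L-DA} and Proposition \ref{KT:P-DA} apply exactly as you say.

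Second, the conclusion ``$\sigma$ restricts on each root subgroup to a morphism, so $\sigma$ is rational, hence everywhere-regular'' is a genuine gap: an abstract homomorphism from a connected algebraic group that is regular on each member of a generating family of closed subgroups is not thereby a morphism, and this is precisely the point the paper has to work for. The argument there writes $E_n(\B)=W_{\alpha(1)}^{\varepsilon_1}\cdots W_{\alpha(v)}^{\varepsilon_v}$ by (\cite{Bo}, Proposition 2.2), forms the regular maps $s,t$ on $X=\prod_i (A^{\circ})_{\alpha(i)}$ with $s$ surjective and $\sigma\circ s=t$, deduces via the graph/constructibility argument of Lemma \ref{NC:L-R1} that $\sigma$ is rational, and then upgrades rational-plus-homomorphism to a morphism by Lemma \ref{L:R-3} (in \cite{IR} this role was played by the big cell). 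Some such argument is needed in your write-up; as it stands, the rationality of $\sigma$ --- one of the two places in this proof where real work occurs, the other being the $K_2$ computation you do carry out --- is missing.
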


We also have the following result for general associative rings.

\begin{thm}\label{NC:T-NonComm2}
Suppose $R$ is an associative ring with $2 \in R^{\times},$ $K$ is an algebraically closed field of characteristic 0, and $n$ is an integer $\geq 3.$ Let $\rho \colon E_n (R) \to GL_m (K)$ be a finite-dimensional linear representation, set $H = \overline{\rho (E_n (R))}$, and denote by $H^{\circ}$ the connected component of $H$. If the unipotent radical of $H^{\circ}$ is commutative, there exists a finite-dimensional associative $K$-algebra $\B$, a ring homomorphism $f \colon R \to \B$ with Zariski-dense image, and a morphism $\sigma \colon E_n (\B) \to H$ of algebraic $K$-groups such that for a suitable finite-index subgroup $\Delta \subset E_n (R)$, we have
$$
\rho \vert_{\Delta} = (\sigma \circ F) \vert_{\Delta},
$$
where $F \colon E_n (R) \to E_n (\B)$ is the group homomorphism induced by $f.$
\end{thm}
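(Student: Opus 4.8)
The plan is to run the argument of Theorem~\ref{NC:T-NonComm1} (which itself follows the pattern of the Main Theorem of \cite{IR}), using the $K$-theoretic results of \S\ref{S:KT} in place of Stein's computations and paying attention to the finite-index phenomena that occur for a general associative ring; the hypothesis $2 \in R^{\times}$ is what allows Proposition~\ref{KT:P-BR2} and Theorem~\ref{KT:T-BR1} to be applied to the finite-dimensional $K$-algebras that arise. First I would reduce to the connected case by passing to $\Delta_1 := \rho^{-1}(H^{\circ})$, a normal subgroup of finite index in $E_n(R)$ (since $H/H^{\circ}$ is finite), and replacing $H$ by $H^{\circ}$. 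Let $U$ denote the unipotent radical of $H^{\circ}$; by hypothesis $U$ is commutative, hence isomorphic (as $\mathrm{char}\,K = 0$) to a vector group, and I write $\pi \colon H^{\circ} \to \bar H := H^{\circ}/U$ for the projection onto the reductive group $\bar H$, so that $\bar\rho := \pi \circ \rho$ has Zariski-dense image in $\bar H$.

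The core of the proof is the construction, directly from the root subgroups, of a finite-dimensional associative $K$-algebra $\B$ together with a ring homomorphism $f \colon R \to \B$ with Zariski-dense image. Since each $e_{ij}(R)$ is abelian and satisfies (R1), the Zariski closures $V_{ij} := \overline{\rho(e_{ij}(R))}$ are commutative algebraic subgroups of $H$ that generate it, and the commutator relations (R2)--(R3) link the $V_{ij}$ in a way that, following \cite{IR}, lets one extract $\B$ as a Zariski closure built from the maps $r \mapsto \rho(e_{ij}(r))$, with its addition coming from (R1) and its multiplication dictated by (R3). The commutativity of $U$ enters decisively at this point: it forces $\mathrm{Rad}(\B)^2 = 0$, so that $\B$ is a square-zero extension of the semisimple ring $\B/\mathrm{Rad}(\B)$ by a bimodule that one identifies with $\mathrm{Lie}(U)$, and it is precisely this that makes the recipe for the multiplication on $\B$ close up and be associative; a non-commutative $U$ would force higher nilpotency in $\mathrm{Rad}(\B)$ and the construction would not terminate, which is why the theorem carries this hypothesis.

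It remains to produce the algebraic morphism $\sigma \colon E_n(\B) \to H$ (recall that $E_n(\B)$ is a connected algebraic $K$-group, as noted before Theorem~\ref{NC:T-NonComm1}) and to verify the factorization on a finite-index subgroup. For this one lifts everything to Steinberg groups: by Proposition~\ref{KT:P-BR2}, $K_2(n,\B)$ is central in $\mathrm{St}_n(\B)$ and is generated by Steinberg symbols, so the homomorphism $F \colon E_n(R) \to E_n(\B)$ induced by $f$ lifts canonically to $\tilde F \colon \mathrm{St}_n(R) \to \mathrm{St}_n(\B)$, and $\sigma$ is assembled from the root-subgroup data at the Steinberg level and then pushed down. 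The difference between $\rho|_{\Delta_1}$ and $(\sigma \circ F)|_{\Delta_1}$ is then controlled by a cocycle with values in the abelian group $U(K)$; in the division-algebra situation of Theorem~\ref{NC:T-NonComm1} one can kill it outright, but for a general associative $R$ this is only possible after passing to a suitable finite-index subgroup $\Delta \subseteq \Delta_1$, and there one obtains $\rho|_{\Delta} = (\sigma \circ F)|_{\Delta}$, as required.

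The step I expect to be the main obstacle is the construction of $\B$ and the verification of its associativity in the non-commutative setting --- showing that the coordinate data extracted from the $V_{ij}$ genuinely assemble into an associative $K$-algebra, that $f$ is a ring homomorphism, and that the relations (R1)--(R3) are transported faithfully --- compounded by the bookkeeping needed because only a finite-index subgroup of $E_n(R)$ ends up under control. The condition $\mathrm{Rad}(\B)^2 = 0$ coming from the commutativity of $U$ is exactly what keeps the relevant obstruction computation (governing the higher commutators $[e_{ij}(a), e_{jk}(b)] = e_{ik}(ab) \in \mathrm{Rad}(\B)^2$) tractable.
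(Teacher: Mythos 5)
Your outline misplaces the role of the hypothesis that the unipotent radical $U$ of $H^{\circ}$ is commutative, and this is not a cosmetic issue. The algebraic ring $\B$ (the paper's $A$, Proposition \ref{NC:P-AR}) is built from $\overline{\rho(e_{13}(R))}$ with addition coming from (R1) and multiplication from the commutator identity $[e_{12}(r),e_{23}(s)]=e_{13}(rs)$, and this construction works for \emph{any} representation of $E_n(R)$, $n\geq 3$: associativity of $\B$ follows simply from associativity of $R$ and Zariski-density of $f(R)$, with no input from $U$ whatsoever, and no condition like $\mathrm{Rad}(\B)^2=0$ is needed or used to make the multiplication ``close up.'' The commutativity of $U$ is used for an entirely different purpose: it gives $Z(H^{\circ})\cap U=\{e\}$ (Proposition \ref{NC:P-St3}(ii)), hence $Z(H^{\circ})$ is \emph{finite} and contained in every Levi subgroup of $H^{\circ}$, and it is this finiteness that makes the final lifting step possible. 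Your proposed mechanism (square-zero radical forcing the construction to terminate) is not the reason the theorem carries this hypothesis.

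The second, and main, gap is that you never confront the actual obstruction: after lifting $\rho$ to $\tilde{\tau}\colon \mathrm{St}_n(A)\to H$ (Proposition \ref{NC:P-St1}) and restricting to $\mathrm{St}_n(A^{\circ})$ (which is where the finite-index subgroup $\Delta$ really comes from, since $\mathrm{St}_n(A)=\mathrm{St}_n(A^{\circ})\times P$ with $P$ finite when $2\in R^{\times}$), one must descend this homomorphism through $\pi_{A^{\circ}}$ to $E_n(A^{\circ})$, i.e.\ deal with the image of $K_2(n,A^{\circ})$. In the division-algebra case this image is killed outright (Proposition \ref{KT:P-DA} plus a density argument), but for general $R$ one can only say that $K_2(n,A^{\circ})$ is central (Proposition \ref{KT:P-BR2}), so its image under $\tilde{\sigma}$ lands in $Z(H^{\circ})$; one therefore first obtains $\bar{\sigma}\colon E_n(A^{\circ})\to H^{\circ}/Z(H^{\circ})$, proves it is a morphism of algebraic groups (this rationality step, via Lemma \ref{NC:L-R1} or its analogue, is also absent from your sketch), and only then lifts it back to $H^{\circ}$ using the Levi decomposition $E_n(\B)=E_n(J)\rtimes E_n(\bar{\B})$ together with the finiteness of $Z(H^{\circ})$. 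Your description of the discrepancy between $\rho$ and $\sigma\circ F$ as ``a cocycle with values in $U(K)$'' does not address this: the obstruction lives in $Z(H^{\circ})$, not in $U$, and no argument is given for why passing to a finite-index subgroup would kill a $U(K)$-valued cocycle. As it stands the proposal would not produce the morphism $\sigma$.
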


As we indicated in the introduction, the proofs of Theorems \ref{NC:T-NonComm1} and \ref{NC:T-NonComm2}
are based on a natural extension of the approach developed in our earlier paper \cite{IR}.
More precisely, we will first associate to $\rho$ an algebraic ring $A$, then show that $\rho$ can be lifted to a representation $\tilde{\tau} \colon \mathrm{St}_n (A) \to H$ of the Steinberg group, and finally use the results of \S \ref{S:KT} to verify that $\tilde{\sigma}$ descends to an abstract representation of $E_n (A)$. Then, to conclude the argument, we will prove that this abstract representation is actually a morphism of algebraic groups.

We begin with the construction of the algebraic ring $A$ attached to a given representation $\rho$.
\begin{prop}\label{NC:P-AR}
Suppose $R$ is an associative ring, $K$ an algebraically closed field, and $n \geq 3$. Given a representation $\rho \colon E_n (R) \to GL_m (K)$, there exists an associative algebraic ring $A$, together with a homomorphism of abstract rings $f \colon R \to A$ having Zariski-dense image such that for all $i, j \in \{ 1, \dots, n \},$ $i \neq j$, there is an injective regular map $\psi_{ij} \colon A \to H$ into $H := \overline{\rho (E_n (R))}$ satisfying
\begin{equation}\label{E:AR-1}
\rho (e_{ij} (t)) = \psi_{ij} (f(t))
\end{equation}
for all $t \in R.$
\end{prop}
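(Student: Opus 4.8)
The plan is to reconstruct the ring $A$ from the commutator structure of $E_n(R)$. For $i \neq j$ write $U_{ij} = \{ e_{ij}(t) : t \in R \}$; by (R1) the map $\theta_{ij} \colon R \to H$, $t \mapsto \rho(e_{ij}(t))$, is a homomorphism from the additive group $(R, +)$, and I set $V_{ij} := \overline{\rho(U_{ij})}$, the Zariski closure in $H = \overline{\rho(E_n(R))}$. Since the closure of an abelian subgroup of an algebraic group is again a closed abelian subgroup, each $V_{ij}$ is a commutative algebraic subgroup of $H$ on which $\theta_{ij}$ has Zariski-dense image. I take $A := V_{12}$ as the underlying (affine) variety, with its group law as the addition of $A$, and set $f := \theta_{12} \colon R \to A$; the multiplication of $A$ will be extracted from relation (R3).

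To produce it, I transport onto $A$ the commutator morphism of $V_{13}$ and $V_{32}$. For a lift $w \in E_n(R)$ of the transposition $(2\,3)$ — a product of elementary matrices $e_{ij}(\pm 1)$ — one has $w\,U_{12}\,w^{-1} = U_{13}$ and $w\,e_{12}(t)\,w^{-1} = e_{13}(\varepsilon_1 t)$ for a fixed sign $\varepsilon_1 = \pm 1$. As $\rho(w) \in H$, conjugation by $\rho(w)$ is an inner, hence algebraic, automorphism of $H$, so it restricts to an isomorphism of algebraic groups $\iota_{13} \colon V_{12} \stackrel{\sim}{\longrightarrow} V_{13}$ with $\iota_{13}(\theta_{12}(t)) = \theta_{13}(\varepsilon_1 t)$; using a lift of $(1\,3)$ I likewise obtain $\iota_{32} \colon V_{12} \stackrel{\sim}{\longrightarrow} V_{32}$ with $\iota_{32}(\theta_{12}(t)) = \theta_{32}(\varepsilon_2 t)$. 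Composing one of these with the inversion morphism of $V_{12}$ if necessary, I may assume $\varepsilon_1 \varepsilon_2 = 1$, and then define
$$
\mu \colon A \times A \longrightarrow H, \qquad \mu(a,b) = \iota_{13}(a)\,\iota_{32}(b)\,\iota_{13}(a)^{-1}\,\iota_{32}(b)^{-1},
$$
a morphism of varieties, since it is built from the multiplication of $H$. On the dense subset $f(R) \times f(R) \subset A \times A$ it takes, by (R3) (which is an exact identity, with no higher commutator terms), the value $\rho([\,e_{13}(\varepsilon_1 r),\, e_{32}(\varepsilon_2 s)\,]) = \rho(e_{12}(rs)) = f(rs)$; hence $\mu$ factors through the closed subset $A = V_{12} \subset H$, yielding a morphism $A \times A \to A$, still denoted $\mu$, with $\mu(f(r), f(s)) = f(rs)$ for all $r, s \in R$. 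Finally, for each pair $(i,j)$ I let $\psi_{ij} \colon A \to H$ be the isomorphism $V_{12} \stackrel{\sim}{\longrightarrow} V_{ij}$ given by conjugation by the appropriate lift, adjusted by the inversion morphism if necessary and followed by the inclusion $V_{ij} \hookrightarrow H$, normalized so that $\psi_{ij}(f(t)) = \theta_{ij}(t) = \rho(e_{ij}(t))$ — this is (\ref{E:AR-1}); being the composite of an isomorphism onto a closed subvariety, $\psi_{ij}$ is regular and injective.

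It then remains to check that $(A, +, \mu)$ is an associative unital ring and that $f$ is a ring homomorphism with Zariski-dense image. Density and additivity of $f$ hold by construction, and $\mu(f(r), f(s)) = f(rs)$ was just noted; every remaining axiom follows from the elementary fact that two morphisms of varieties into $A$ that agree on a Zariski-dense subset coincide. Since $\overline{f(R)} = A$, the set $f(R)^k$ is dense in $A^k$, so the morphisms $(a,b,c) \mapsto \mu(\mu(a,b),c)$ and $(a,b,c) \mapsto \mu(a,\mu(b,c))$ agree on $f(R)^3$ — because $R$ is associative — hence everywhere; left and right distributivity follow in the same way, using the additivity of $f$ to rewrite $f(r) + f(r') = f(r+r')$ and the biadditivity of $\mu$ (itself checked on $f(R)$). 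Likewise $f(1_R)$ is a two-sided identity for $\mu$, since $a \mapsto \mu(f(1_R), a)$ and $a \mapsto \mu(a, f(1_R))$ agree with the identity on $f(R)$. That the structure maps of $A$ are morphisms of varieties is automatic, being inherited from $H$, so $A$ is an associative algebraic ring with the stated properties.

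I expect the only real difficulty to be the sign and compatibility bookkeeping in the second step: one must choose the conjugating Weyl-type elements so that $\varepsilon_1 \varepsilon_2 = 1$ — equivalently, so that $f$ is genuinely multiplicative rather than multiplicative up to a sign — and, to make (\ref{E:AR-1}) hold simultaneously for all pairs $(i,j)$, arrange the various identifications $\iota_{ij}$ compatibly. This is precisely where the construction runs parallel to the commutative case treated in \cite{IR}, and I would expect to invoke or adapt that analysis rather than reproduce it in full.
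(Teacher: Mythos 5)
Your proposal is correct and follows essentially the same route as the paper's proof: take $A$ to be the Zariski closure of the image of a single root subgroup with addition given by the group law, obtain the multiplication from the commutator relation (R3) after conjugating by Weyl-type elements $w_{ij}(1)$, verify everything (ring axioms, that $\mu$ lands in $A$) by density of $f(R)$, and get the remaining $\psi_{ij}$ by conjugation. The only cosmetic differences are that the paper bases the construction on $e_{13}(R)$ and chooses the conjugating elements $\overline{w}_{12},\overline{w}_{23}$ so that no sign adjustments are needed, whereas you work from $e_{12}(R)$ and fix the signs by composing with inversion.
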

\begin{proof}
This statement goes back to \cite{Kas} (see also \cite{IR}, Theorem 3.1). For the sake of completeness, we indicate
the main points of the construction. Let $A = \overline{\rho (e_{13} (R))}$. If $\ba \colon A \times A \to A$ denotes the restriction of the matrix product in $H$ to $A$, it is clear $(A, \ba)$ is a commutative algebraic subgroup of $H$. We let $f \colon R \to A$ be the map defined by $t \mapsto \rho(e_{13} (t))$. From the definition, it follows that
$$
\ba (f(t_1), f(t_2)) = f(t_1 + t_2)
$$
for all $t_1, t_2 \in R.$ To define the multiplication operation $\bm \colon A \times A \to A$, we will need the following elements:
$$
\overline{w}_{12} = e_{12} (1) e_{21} (-1) e_{12} (1) \ \ \ \mathrm{and} \ \ \ \overline{w}_{23} = e_{23} (1) e_{32}(-1) e_{23} (1)
$$
(notice that these are simply the images under $\pi_{R}$ of the elements $w_{ij} (1)$ considered in \S \ref{S:KT}).
By direct computation, one sees that
$$
\overline{w}_{12}^{-1} e_{13}(r) \overline{w}_{12} = e_{23} (r), \ \ \ \overline{w}_{23} e_{13} (r) \overline{w}_{23}^{-1} = e_{12} (r)
$$
and
$$
[e_{12} (r), e_{23} (s)] = e_{13} (rs)
$$
for all $r, s \in R$, where $[g, h] = g h g^{-1} h^{-1}.$ Now let $\bm \colon A \times A \to H$  be the regular map defined by
$$
\bm (a_1, a_2) = [\rho(\overline{w}_{23}) a_1 \rho(\overline{w}_{23})^{-1}, \rho(\overline{w}_{12})^{-1} a_2 \rho(\overline{w}_{12})].
$$
Then the above relations yield
$$
\bm (f(t_1), f(t_2)) = f(t_1 t_2),
$$
so, in particular, $\bm (f(R) \times f(R)) \subset f(R),$ which implies that
$\bm (A \times A) \subset A$ and allows us to view $\bm$ as a regular map $\bm \colon A \times A \to A.$ Since by our assumption $R$ is a (unital) associative ring and $f$ has Zariski-dense image, it follows that
$(A, \ba, \bm)$ is a (unital) associative algebraic ring, as defined in \cite{IR}, \S 2. Furthermore, by our construction, (\ref{E:AR-1}) obviously holds for the inclusion map $\psi_{13} \colon A \to H.$ Finally, using an appropriate element $\overline{w}_{ij},$ we can conjugate any root subgroup $e_{ij} (R)$ into $e_{13} (R),$ from which the
existence of all the other maps $\psi_{ij}$ follows.
\end{proof}

\vskip2mm

\noindent {\bf Remark 3.4.} Observe that if $R$ is an infinite division ring,
then the algebraic ring $A$ constructed in Proposition \ref{NC:P-AR} is automatically connected. Indeed, the
connected component $A^{\circ}$ is easily seen to be a two-sided ideal of $A$. So, if $A \neq A^{\circ},$ then $f^{-1} (A^{\circ})$ would be a proper two-sided ideal of finite index in $R$, which is impossible. In particular, we see that in the situation of Theorem \ref{NC:T-NonComm1}, the algebraic ring associated to $\rho$ is connected.

\vskip2mm

Next, we show that the representation $\rho$ can be lifted to a representation of the Steinberg group $\mathrm{St}_n (A).$ The precise statement is given by the following proposition.

\addtocounter{thm}{1}

\begin{prop}\label{NC:P-St1}
Suppose $R$ is an associative ring, $K$ an algebraically closed field, and $n \geq 3$, and let
$\rho \colon E_n (R) \to GL_m (K)$ a representation. Furthermore, let $A$ and $f \colon R \to A$ be the algebraic ring and ring homomorphism constructed in Proposition \ref{NC:P-AR}.
Then there exists a group homomorphism $\tilde{\tau} \colon \mathrm{St}_n (A) \to H \subset GL_m (K)$ such that $\tilde{\tau} \colon x_{ij} (a) \mapsto \psi_{ij} (a)$ for all $a \in A$ and all $i,j \in \{1, \dots, n \}, i \neq j.$ Consequently, $\tilde{\tau} \circ \tilde{F} = \rho \circ \pi_R,$ where $\tilde{F} \colon \mathrm{St}_n (R) \to \mathrm{St}_n (A)$ is the homomorphism induced by $f.$
\end{prop}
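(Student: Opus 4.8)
The plan is to exploit the presentation of $\mathrm{St}_n(A)$ by the generators $x_{ij}(a)$ ($i \neq j$, $a \in A$) and the Steinberg relations (R1)--(R3), and simply to set
$$
\tilde\tau(x_{ij}(a)) = \psi_{ij}(a),
$$
where the $\psi_{ij} \colon A \to H$ are the regular maps produced in Proposition \ref{NC:P-AR}. By the universal property of a group presentation, this prescription will extend to a homomorphism $\mathrm{St}_n(A) \to H \subset GL_m(K)$ (note that the image automatically lands in $H$, since each $\psi_{ij}(a)$ lies in $H$ and $H$ is a group) as soon as one checks that the elements $\psi_{ij}(a)$ satisfy the analogues of (R1)--(R3).

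To verify those relations I would fix a choice of indices and view each relation as an identity between two regular maps $A \times A \to H$: e.g.\ (R1) becomes $(a,b) \mapsto \psi_{ij}(a)\psi_{ij}(b)$ versus $(a,b) \mapsto \psi_{ij}(\ba(a,b))$; (R2), for $i \neq l$, $j \neq k$, becomes $(a,b) \mapsto [\psi_{ij}(a),\psi_{kl}(b)]$ versus the constant map $1$; and (R3), for $i \neq l$, becomes $(a,b) \mapsto [\psi_{ij}(a),\psi_{jl}(b)]$ versus $(a,b) \mapsto \psi_{il}(\bm(a,b))$. These are all morphisms of varieties, because $\psi_{ij}$, the operations $\ba,\bm$ on the algebraic ring $A$, and the multiplication, inversion and commutator maps of the algebraic group $H$ are regular. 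On $f(R) \times f(R)$ each of these identities holds: indeed $\psi_{ij}(f(t)) = \rho(e_{ij}(t))$ by (\ref{E:AR-1}), the elementary matrices satisfy (R1)--(R3) in $E_n(R)$ since $n \geq 3$, $\rho$ is a homomorphism, and $\ba,\bm$ were built in Proposition \ref{NC:P-AR} so that $\ba(f(t_1),f(t_2)) = f(t_1+t_2)$ and $\bm(f(t_1),f(t_2)) = f(t_1 t_2)$. Since $f(R)$ is Zariski-dense in $A$, the set $f(R)\times f(R)$ is Zariski-dense in $A \times A$, and as $H$ is a variety (hence separated), two morphisms $A\times A \to H$ that agree on a dense subset agree everywhere. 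Thus (R1)--(R3) hold on all of $A$, and $\tilde\tau$ is a well-defined homomorphism with the stated effect on generators.

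For the concluding assertion, $\tilde\tau\circ\tilde F$ and $\rho\circ\pi_R$ are both homomorphisms $\mathrm{St}_n(R)\to GL_m(K)$, and on a generator $x_{ij}(t)$ of $\mathrm{St}_n(R)$ they agree: $\tilde\tau(\tilde F(x_{ij}(t))) = \tilde\tau(x_{ij}(f(t))) = \psi_{ij}(f(t)) = \rho(e_{ij}(t)) = \rho(\pi_R(x_{ij}(t)))$, again by (\ref{E:AR-1}); since these generators generate $\mathrm{St}_n(R)$, the two homomorphisms coincide. The only substantive point in the argument is the Zariski-density/separatedness step of the second paragraph: this is where one genuinely uses that $A$ is an algebraic ring with $f(R)$ dense and that all the structure maps are regular. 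Granting (R1)--(R3) for the $\psi_{ij}$, everything else is formal manipulation of presentations.
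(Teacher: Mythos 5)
Your proposal is correct and follows essentially the same route as the paper: the paper also defines $\tilde{\tau}$ on the generators $x_{ij}(a)$ via $\psi_{ij}(a)$ and verifies the Steinberg relations (R1)--(R3) using (\ref{E:AR-1}) together with the Zariski-density of $f(R)$ in $A$ (your density-plus-separatedness argument is just the detailed form of this step, as carried out in \cite{IR}, Proposition 4.2), and the final identity $\tilde{\tau}\circ\tilde{F}=\rho\circ\pi_R$ is checked on generators exactly as you do.
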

\begin{proof}
This proposition is proved in exactly the same way as (\cite{IR}, Proposition 4.2). We simply note that since
$\mathrm{St}_n (A)$ is generated by the symbols $x_{ij} (a)$ subject to the relations (R1)-(R3) given in \S \ref{S:KT}, to establish the existence of $\tilde{\tau},$ it suffices to verify that
relations (R1)-(R3) are satisfied if the $x_{\ij} (a)$ are replaced by $\psi_{ij} (a)$, which follows from (\ref{E:AR-1}) and the fact that $f$ has Zariski-dense image.
For the second statement, we observe that
the maps $\tilde{\tau} \circ \tilde{F}$ and $\rho \circ \pi_R$ both send the symbol $x_{ij} (s)$ to $\psi_{ij} (f(s)) = \rho (e_{ij} (s)) = (\rho \circ \pi_R)(x_{ij} (s)),$ so they must coincide on $\mathrm{St}_n (R).$
\end{proof}

To analyze the representation $\tilde{\sigma}$ that we have just constructed, we will need some additional information on the structure of the group $\mathrm{St}_n (A).$


\begin{prop}\label{NC:P-St2}
Let $K$ be an algebraically closed field of characteristic 0 and $n$ and integer $\geq 3.$ Suppose $A$ is an associative algebraic ring over $K$ such that $2 \in A^{\times}$ and denote by $A^{\circ}$ the connected component of $0_A.$ Then

\vskip1mm

\noindent $\mathrm{(i)}$ $\mathrm{St}_n (A) = \mathrm{St}_n (A^{\circ}) \times P$, where $P$ is a finite group;

\vskip1mm

\noindent $\mathrm{(ii)}$ $K_2 (n, A^{\circ})$ is a central subgroup of $\mathrm{St}_n (A^{\circ}).$
\end{prop}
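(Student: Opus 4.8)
The plan is to deduce both assertions from the structure theory of algebraic rings in characteristic zero (\cite{IR}, \S2) together with Proposition \ref{KT:P-BR2}. The key input is the following: over an algebraically closed field $K$ of characteristic $0$, the connected component $A^{\circ}$ of $0_A$ is a two-sided ideal of finite index which is (the variety underlying) a \emph{unital} finite-dimensional $K$-algebra, and $A$ splits as a direct product of rings $A = A^{\circ} \times A_f$, where $A_f \simeq A / A^{\circ}$ is a finite ring; the hypothesis $2 \in A^{\times}$ then forces $2 \in A_f^{\times}$.

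Granting this, part $\mathrm{(ii)}$ is immediate: $A^{\circ}$ is a finite-dimensional algebra over the algebraically closed field $K$, so Proposition \ref{KT:P-BR2} applies and yields $K_2 (n, A^{\circ}) \subseteq U_n (A^{\circ})$, which is a central subgroup of $\mathrm{St}_n (A^{\circ})$. For part $\mathrm{(i)}$, I would use the standard fact, valid for $n \geq 3$, that $\mathrm{St}_n$ carries a finite direct product of rings to the corresponding direct product of groups, i.e. $\mathrm{St}_n (A) = \mathrm{St}_n (A^{\circ} \times A_f) \simeq \mathrm{St}_n (A^{\circ}) \times \mathrm{St}_n (A_f)$ (one checks this directly on the Steinberg generators using the relations (R1)-(R3) and $n \geq 3$: the subgroups generated by the $x_{ij}((a,0))$ and by the $x_{ij}((0,b))$ commute elementwise, generate $\mathrm{St}_n(A)$, and are identified with $\mathrm{St}_n(A^{\circ})$ and $\mathrm{St}_n(A_f)$ via the coordinate projections). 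Setting $P := \mathrm{St}_n (A_f)$, it remains only to show that $P$ is finite.

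For this, observe that $E_n (A_f) \subseteq GL_n (A_f)$ is a finite group, and it is perfect since $n \geq 3$; by Proposition \ref{KT:P-1}(a) (applicable because $A_f$ is finite, hence $A_f / \mathrm{Rad}(A_f)$ is artinian and $A_f$ is finitely generated as a module over its center) the subgroup $K_2 (n, A_f)$ is central in $\mathrm{St}_n (A_f)$. Thus $1 \to K_2 (n, A_f) \to \mathrm{St}_n (A_f) \to E_n (A_f) \to 1$ is a central extension of a finite group by $\mathrm{St}_n(A_f)$, which is itself perfect; consequently $\mathrm{St}_n (A_f)$ is a quotient of the universal central extension of $E_n (A_f)$, whose kernel is the finite Schur multiplier $H_2 (E_n (A_f), \Z)$. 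Hence $K_2 (n, A_f)$ — and therefore $P = \mathrm{St}_n (A_f)$ — is finite, which finishes $\mathrm{(i)}$.

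The main obstacle is the structural input of the first step. That $A^{\circ}$ is a finite-dimensional $K$-algebra is part of the basic theory of algebraic rings in \cite{IR}, \S2 (in characteristic $0$ the additive group of an algebraic ring is a vector group, so $A^{\circ} \simeq \Af^{d}$ as a variety and inherits a finite-dimensional $K$-algebra structure). For the ring-theoretic splitting $A = A^{\circ} \times A_f$, if it is not available off the shelf one argues that $A^{\circ}$, being an artinian two-sided ideal, possesses an identity element, which is then forced to be a central idempotent $e \in A$ (using $1_A \cdot x = x$ for $x \in A^{\circ}$, together with lifting of idempotents modulo the radical if necessary), so that $A = eA \times (1 - e)A = A^{\circ} \times A_f$ with $A_f$ finite. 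With this decomposition in hand, everything else is formal manipulation of the Steinberg presentation and an appeal to the results of \S\ref{S:KT}.
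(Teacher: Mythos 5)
Your proposal is correct and follows the same skeleton as the paper: split $A = A^{\circ} \oplus S$ with $S$ finite using the structure theory of algebraic rings in characteristic zero (the paper cites \cite{IR}, Proposition 2.14 for precisely this), deduce $\mathrm{St}_n(A) = \mathrm{St}_n(A^{\circ}) \times \mathrm{St}_n(S)$, obtain (ii) from Proposition~\ref{KT:P-BR2}, and reduce (i) to the finiteness of $K_2(n,S)$ via centrality. You deviate in two places in the finiteness step. First, for centrality of $K_2(n,A_f)$ you invoke Proposition~\ref{KT:P-1}(a) (a finite ring is module-finite over its center), whereas the paper uses $2 \in S^{\times}$ together with Proposition~\ref{KT:P-BR2}; as Remark~3.7 explains, this is exactly where the hypothesis $2 \in A^{\times}$ enters the paper's argument, so your route in fact shows that hypothesis is not needed for part (i). Second, you conclude finiteness by noting that the perfect central extension $\mathrm{St}_n(A_f)$ of the finite group $E_n(A_f)$ is a quotient of its universal central extension, whose kernel is the finite Schur multiplier $H_2(E_n(A_f),\Z)$, while the paper runs the inflation--restriction (Hochschild--Serre) sequence with $\Q/\Z$-coefficients; the two arguments are equivalent in substance, both resting on the finiteness of the second (co)homology of a finite group, and yours is arguably a bit cleaner. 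One caution: your fallback justification of the ring splitting --- ``an artinian two-sided ideal possesses an identity element'' --- is not valid in general (a square-zero ideal is a counterexample); this is harmless only because the splitting really is available off the shelf from \cite{IR}, Proposition 2.14, which is the route the paper takes.
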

\begin{proof}
(i) First, since char $K = 0$, by (\cite{IR}, Proposition 2.14), we have $A = A^{\circ} \oplus S$, with $S$ a finite ring.
So,
$$
\mathrm{St}_n (A) = \mathrm{St}_n (A^{\circ}) \times \mathrm{St}_n (S),
$$
and we need to show that $\mathrm{St}_n(S)$ is a finite group. Now, since $E_n (S)$ is obviously a finite group and $K_2 (n, S)$ is by definition the kernel of the canonical map $\pi_S \colon \mathrm{St}_n (S) \to E_n (S)$, we see that the finiteness of $\mathrm{St}_n (S)$ is equivalent to that of $K_2 (n, S).$ On the other hand, since $2 \in S^{\times}$, Proposition \ref{KT:P-BR2} implies that
$K_2 (n,S)$ is a central subgroup of $\mathrm{St}_n (S)$. So, we can use the argument given in the proof of (\cite{IR}, Proposition 4.5) and consider the Hochschild-Serre spectral sequence
$$
H^1 (\mathrm{St}_n (S), \Q/ \Z) \to H^1 (K_2 (\Phi, S), \Q/ \Z)^{\mathrm{St}_n (S)} \to H^2 (E_n (S), \Q/ \Z)
$$
(where all groups act trivially on $\Q/ \Z$) corresponding to the short exact sequence
$$
1 \to K_2 (n, S) \to \mathrm{St}_n(S) \stackrel{\pi_S}{\longrightarrow} E_n (S) \to 1
$$
to conclude that $K_2 (n,S)$ is finite.


\vskip2mm

\noindent (ii) By (\cite{IR}, Proposition 2.14), $A^{\circ}$ is a finite-dimensional $K$-algebra, so
the assertion follows from Proposition \ref{KT:P-BR2}.
\end{proof}

\vskip2mm

\noindent {\bf Remark 3.7} We would like to point out that the assumption that $2 \in A^{\times}$ is needed to guarantee that the finite ring $S$ that appears in the proof of Proposition \ref{NC:P-St2}(i) above is additively generated by its units, which then enables us to use Proposition \ref{KT:P-BR2} to conclude that $K_2 (n,S)$ is a central subgroup of $\mathrm{St}_n (S).$ If $S$ is a finite commutative ring, then, as we show in (\cite{IR}, Proposition 4.5), this assumption is not needed since in that case $S$ can be written as a finite product of commutative local rings, which are automatically generated by their units.

\vskip2mm

To complete the proofs of Theorems \ref{NC:T-NonComm1} and \ref{NC:T-NonComm2}, the basic idea will be to show that
the homomorphism $\tilde{\tau}$ constructed in Proposition \ref{NC:P-St1} descends to a (rational) representation of $E_n (A).$ Let us make this more precise. Given a representation $\rho \colon E_n (R) \to GL_m (K),$ let $f \colon R \to A$ be the ring homomorphism associated to $\rho$ (Proposition \ref{NC:P-AR}), and
denote by
$\tilde{F} \colon \mathrm{St}_n (R) \to \mathrm{St}_n (A)$ and $F \colon E_n (R) \to E_n (A)$ the group homomorphisms induced $f.$
Then under the hypotheses of Theorems \ref{NC:T-NonComm1} and \ref{NC:T-NonComm2}, we have $\mathrm{St}_n (A) = \mathrm{St}_n (A^{\circ})$ (Remark 3.4) and $\mathrm{St}_n (A) = \mathrm{St}_n (A^{\circ}) \times P$ (Proposition \ref{NC:P-St2}), respectively,
so in both cases,
$\tilde{\Delta} := \tilde{F}^{-1} (\mathrm{St}_n (A^{\circ}))$ and $\Delta : = \pi_R (\tilde{\Delta})$ are finite-index subgroups of  $\mathrm{St}_n (R)$ and $E_n (R)$. Moreover, it is clear that
$F(\Delta) \subset E_n (A^{\circ})$.
Thus, letting
$\tilde{\sigma}$ denote the restriction of $\tilde{\tau}$ to $\mathrm{St}_n (A^{\circ})$, we see that the solid arrows in
\begin{equation}\label{NC:D-3}
\xymatrix{\tilde{\Delta} \ar[r]^{\tilde{F}} \ar[d]_{\pi_R} & \mathrm{St}_n (A^{\circ}) \ar[d]^{\pi_{A^{\circ}}} \ar[rrdd]^{\tilde{\sigma}} \\ \Delta \ar[rrrd]_{\rho} \ar[r]^{F} & E_n(A^{\circ}) \ar@{.>}[rrd]^{\sigma} \\ & & & H^{\circ} \\}
\end{equation}
form a commutative diagram. In the remainder of this section, we will show that under our assumptions, there exists
a group homomorphism $\sigma \colon E_n (A^{\circ}) \to H^{\circ}$ (in fact, a morphism of algebraic groups)
making the full diagram commute. In the situation of Theorem \ref{NC:T-NonComm1}, the existence of the required {\it abstract} homomorphism $\sigma$ will be shown in Proposition \ref{NC:P-SL} below. For Theorem \ref{NC:T-NonComm2},
we will first need to establish the somewhat weaker result that
that there exists a homomorphism $\bar{\sigma} \colon E_n (A^{\circ}) \to \bar{H}$ such that $\bar{\sigma} \circ \pi_{A^{\circ}} = \nu \circ \tilde{\sigma}$, where $Z(H^{\circ})$ is the center of $H^{\circ}$, $\bar{H} = H^{\circ}/ Z(H^{\circ})$, and $\nu \colon H^{\circ} \to \bar{H}$ is the canonical map (see Proposition \ref{NC:P-FI})

\begin{prop}\label{NC:P-SL}
Suppose $k$ and $K$ are fields of characteristic 0, with $K$ algebraically closed, $D$ is a finite-dimensional central division algebra over $k$, and $n$ is an integer $\geq 3.$ Let $\rho \colon E_n (D) \to GL_m (K)$ be a representation and denote by $A$ the algebraic ring associated to $\rho$ (Proposition \ref{NC:P-AR}).
Then $A = A^{\circ}$ is a finite-dimensional $K$-algebra and there exists a homomorphism of abstract groups $\sigma \colon E_n (A^{\circ}) \to H^{\circ}$ making the diagram $\mathrm{(\ref{NC:D-3})}$ commute.
\end{prop}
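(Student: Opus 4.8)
The first assertion is quick. Since $D$ is an infinite division ring, the algebraic ring $A$ produced by Proposition \ref{NC:P-AR} is connected (Remark 3.4), and a connected algebraic ring over a field of characteristic $0$ is a finite-dimensional $K$-algebra by (\cite{IR}, Proposition 2.14). For the homomorphism $\sigma$, Proposition \ref{NC:P-St1} already supplies the lift $\tilde{\tau}\colon\mathrm{St}_n(A)\to H^{\circ}$, and the diagram-chase preceding the statement reduces the existence of a $\sigma\colon E_n(A^{\circ})\to H^{\circ}$ making $(\ref{NC:D-3})$ commute to the single assertion that $\tilde{\tau}$ vanishes on $\ker(\pi_A)=K_2(n,A)$. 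So the whole problem is to prove $\tilde{\tau}|_{K_2(n,A)}=1$.

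The plan is to pass to a commutative ring. By Lemma \ref{KT:L-DA} we identify $A\simeq M_s(C)$, with $C=\overline{f(k)}$ a commutative finite-dimensional $K$-algebra and $s^2=\dim_k D$. Proposition \ref{KT:P-1}(b) then provides compatible isomorphisms $\mathrm{St}_n(A)\simeq\mathrm{St}_{ns}(C)$, $E_n(A)\simeq E_{ns}(C)$, inducing $K_2(n,A)\simeq K_2(ns,C)$. Transporting $\tilde{\tau}$ through them, the claim becomes that the resulting homomorphism $\tilde{\tau}'\colon\mathrm{St}_{ns}(C)\to H^{\circ}$ kills $K_2(ns,C)$ — a statement about the commutative algebraic ring $C$, which I would establish directly, in the spirit of \cite{IR}. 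Now $C$ is semilocal and additively generated by its units (\cite{IR}, Corollary 2.5), so by Stein's theorem (\cite{St2}, Theorem 2.13), equivalently by Proposition \ref{KT:P-DA}, $K_2(ns,C)=U_{ns}(C)$ is generated by the Steinberg symbols $c_{1,s+1}(u,v)$, $u,v\in C^{\times}$; under the identifications these are the $\tilde{\psi}$-images of the elements $c(t_u,t_v)$ with $t_u={\rm diag}(u,1,\dots,1)\in M_s(C)$, so $\tilde{\tau}'(c_{1,s+1}(u,v))=\tilde{\tau}(c(t_u,t_v))$.

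The key observation is that $(u,v)\mapsto\tilde{\tau}(c(t_u,t_v))$ is a \emph{regular} map $C^{\times}\times C^{\times}\to H^{\circ}$: $c(t_u,t_v)$ is a fixed word in the symbols $x_{ij}(\cdot)$ evaluated at $t_u,t_v,t_u^{-1},t_v^{-1},t_vt_u$ and constants, $\tilde{\tau}(x_{ij}(a))=\psi_{ij}(a)$ is regular in $a$ (Proposition \ref{NC:P-AR}), and inversion and multiplication in $H$ are regular. This map is also bimultiplicative ($c$ being bimultiplicative on commuting units), takes values in $Z(H^{\circ})$ ($K_2(n,A)$ being central, Proposition \ref{NC:P-St2}(ii)), and satisfies the Steinberg relation $\tilde{\tau}(c(t_u,t_{1-u}))=1$ whenever $u,1-u\in C^{\times}$ (since $c_{1,s+1}(u,1-u)=1$ already in $\mathrm{St}_{ns}(C)$). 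So everything reduces to: a regular bihomomorphism $\phi\colon C^{\times}\times C^{\times}\to Z(H^{\circ})$ satisfying this Steinberg relation is trivial. Decomposing $C$ as a finite product of local algebras $K\oplus J$ ($J$ nilpotent, $K$ being algebraically closed) splits the symbol over the factors, so one is reduced to a single factor, where $C^{\times}=K^{\times}\cdot(1+J)$. On $K^{\times}\times K^{\times}$, $\phi$ is trivial by rigidity of split tori. On $K^{\times}\times(1+J)$ and $(1+J)\times K^{\times}$: with the $(1+J)$-argument fixed, $\phi$ is a regular homomorphism $\mathbb{G}_m\to Z(H^{\circ})^{\circ}$, hence a cocharacter of the maximal torus, and as this varies regularly over the connected group $1+J$ it stays constant, namely the trivial cocharacter, so $\phi\equiv 1$ there. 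On $(1+J)\times(1+J)$ — an abelian unipotent $K$-group — logarithmic coordinates turn $\phi$ into a $K$-bilinear form on the vector group $J$ valued in the unipotent part of $Z(H^{\circ})^{\circ}$; feeding values $u=t\exp(x)$, $t\in K^{\times}\setminus\{1\}$, $x\in J$, into the Steinberg relation and discarding the factors just shown trivial yields a polynomial-in-$t$ identity that, since $K$ is infinite, forces the form to vanish (already in the case $J^2=0$, $K\oplus J=K[\varepsilon]$, the relation collapses to a nonzero scalar multiple of the form being zero). Hence $\tilde{\tau}'$ kills $K_2(ns,C)$, so $\tilde{\tau}$ kills $K_2(n,A)$, and the desired abstract homomorphism $\sigma$ exists.

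I expect the last step — showing the regular Steinberg symbol $\phi$ is trivial, i.e. the bilinear-form computation for a general nilpotent $J$ — to be the main obstacle; the governing principle (regularity of the symbol together with the infiniteness of $K$ and the Steinberg relation) is robust, but organizing the higher-order terms cleanly takes care, and this is where the finite-dimensionality of $C$ over $K$ really enters. One also needs the standard bimultiplicativity of the symbols $c$ on commuting units in the relevant rank range, the centrality of $K_2(n,A)$ in $\mathrm{St}_n(A)$ (Proposition \ref{NC:P-St2}(ii), via Proposition \ref{KT:P-BR2}), and the identification $A\simeq M_s(C)$ of Lemma \ref{KT:L-DA}.
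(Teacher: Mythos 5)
Your first assertion and your reduction of the problem to showing that $\tilde{\tau}$ kills $K_2(n,A)$ agree with the paper, as does the identification $A\simeq M_s(C)$, $K_2(n,A)\simeq K_2(ns,C)$ with generators $c(t_u,t_v)$, $u,v\in C^{\times}$, and the observation that $(u,v)\mapsto\tilde{\tau}(c(t_u,t_v))$ is regular. The gap is in the final step: the abstract principle you reduce everything to --- that every \emph{regular} bimultiplicative map $\phi\colon C^{\times}\times C^{\times}\to Z$ into a commutative algebraic group satisfying $\phi(u,1-u)=1$ is trivial --- is false as soon as some local factor $K\oplus J$ of $C$ has $\dim_K J\geq 2$. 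Indeed, take $J^2=0$ and a nonzero alternating $K$-bilinear form $B$ on $J$; every unit is uniquely $u=t(1+x_u)$ with $t\in K^{\times}$, $x_u\in J$, and $x_{uu'}=x_u+x_{u'}$, so $\phi(u,v):=B(x_u,x_v)\in \mathbb{G}_a$ is regular and biadditive, while $\phi(u,1-u)=-B(x,x)/(t(1-t))=0$; yet $\phi\neq 0$. (It is also antisymmetric and kills $\{u,-u\}$, so throwing in the other standard symbol relations does not help; by van der Kallen's presentation of $K_2$ of rings with many units such a $\phi$ really does come from a character of $K_2(ns,C)$, and since $H^{\circ}$ need not be reductive here, $Z(H^{\circ})$ can perfectly well contain $\mathbb{G}_a$ factors.) Your computation for $K[\varepsilon]$ succeeds only because there $\dim_K J=1$, where ``alternating'' forces zero; the step you flagged as the main obstacle --- the bilinear-form computation for general nilpotent $J$ --- is not just delicate but impossible from the formal properties (regularity, bimultiplicativity, centrality, Steinberg relation) alone.

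What is missing is input about $\tilde{\tau}$ itself, i.e.\ about $\rho$, and this is exactly where the division-algebra structure is used in the paper. Proposition \ref{KT:P-DA} shows that $K_2(n,A)$ is generated by the symbols $c(u,v)$ with $u,v\in\overline{f(L^{\times})}$ for a maximal subfield $L\subset D$: the point is that the matrices $t_u$ are diagonal, hence lie in the image of $L\otimes_k C\simeq D_s(C)$ under the identification of Lemma \ref{KT:L-DA}. For $u,v\in L^{\times}$ the element $h_{12}(u)h_{12}(v)h_{12}(vu)^{-1}$ already maps to the identity in $E_n(D)$ because $L$ is commutative, so $\tilde{\tau}(c(f(u),f(v)))=\rho(1)=1$; then the regularity of $(u,v)\mapsto\tilde{\tau}(c(u,v))$ on $A^{\times}\times A^{\times}$ (which you did establish) and the Zariski density of $f(L^{\times})$ in $\overline{f(L^{\times})}$ give vanishing on all generators, hence on $K_2(n,A)$. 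Replacing your formal triviality lemma by this maximal-subfield-plus-density argument repairs the proof; the rest of your outline is sound.
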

\begin{proof}
We have $A = A^{\circ}$ by Remark 3.4, and $A^{\circ}$ is a finite-dimensional $K$-algebra by (\cite{IR}, Proposition 2.14).
Next, by Proposition \ref{KT:P-DA}, $K_2 (n, A)$ coincides with the subgroup
$$
U'_n (A) = < c(u,v) \mid u,v \in \overline{f(L^{\times})} >,
$$
of $\mathrm{St}_n (A)$,
where $L$ is an arbitrary maximal subfield of $D$ and $f \colon D \to A$ is the ring homomorphism associated to $\rho.$ Now,
from the construction of $\tilde{\sigma}$ and the definition of $c(u,v),$ we have
$$
\tilde{\sigma}(c(u,v)) = H_{12} (u) H_{12} (v) H_{12} (vu)^{-1},
$$
where for $r \in A^{\times},$ we set
$$
H_{12} (r)= W_{12} (r) W_{12} (-1) \ \ \ \mathrm{and} \ \ \ W_{12} (r) = \psi_{12} (r) \psi_{21} (-r^{-1}) \psi_{12} (r).
$$
By (\cite{IR}, Proposition 2.4), the map $A^{\times} \to A^{\times},$ $t \mapsto t^{-1}$ is regular, which implies that the map
$$
\Theta \colon A^{\times} \times A^{\times} \to H, \ \ \ (u,v) \mapsto \tilde{\tau} (c(u,v))
$$
is also regular. On the other hand, as we observed earlier, $\pi_D (h_{ij} (u)) \in E_n (D)$ is a diagonal matrix with $u$ as the $i$th diagonal entry, $u^{-1}$ as the $j$th diagonal entry, and 1's everywhere else on the diagonal. In particular, for $u, v \in L^{\times}$ it follows that
$$
\pi_D (h_{12} (u) h_{12} (v) h_{12} (vu)^{-1}) = 1.
$$
So, by Proposition \ref{NC:P-St1},
$$
\tilde{\sigma} (c(f(u), f(v))) = \rho (\pi_D (h_{12} (u) h_{12} (v) h_{12} (vu)^{-1})) = 1
$$
for all $u, v \in L^{\times}.$ By the regularity of $\Theta,$ we obtain that $\tilde{\sigma} (c(a,b)) = 1$ for all $a, b \in \overline{f(L^{\times})},$ and consequently $\tilde{\sigma}$ vanishes on $K_2 (n,A).$ Since the canonical homomorphism $\pi_A \colon \mathrm{St}_n (A) \to E_n (A)$ is surjective and by definition $K_2 (n,A) = \ker \pi_A,$
the existence of $\sigma$ now follows.
\end{proof}

The proof of Theorem \ref{NC:T-NonComm2} will require the following proposition,
which contains analogs of results established in (\cite{IR}, \S 5).

\begin{prop}\label{NC:P-St3}
Suppose $R$ is an associative ring with $2 \in R^{\times}$, $K$ is an algebraically closed field of characteristic 0,
and $n \geq 3.$ Let $\rho \colon E_n (R) \to GL_m (K)$ be a representation, set $H = \overline{\rho (E_n (R))}$, and denote by $A$ the algebraic ring associated to $\rho.$  Then

\vskip1mm

\noindent $\mathrm{(i)}$ \parbox[t]{16cm}{The group $H^{\circ}$ coincides with $\tilde{\sigma} (\mathrm{St}_n (A^{\circ}))$ and is its own commutator.}

\vskip1mm

\noindent $\mathrm{(ii)}$ \parbox[t]{16cm}{Let $U$ and $Z(H^{\circ})$ be the unipotent radical and center of $H^{\circ},$ respectively.
If $U$ is commutative, then $Z(H^{\circ}) \cap U = \{e \}$, and consequently, $Z(H^{\circ})$ is finite and is contained in any Levi subgroup of $H^{\circ}.$}
\end{prop}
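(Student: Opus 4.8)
The plan is to deduce (i) from Propositions \ref{NC:P-AR}, \ref{NC:P-St1} and \ref{NC:P-St2} together with the commutative diagram (\ref{NC:D-3}), and then to derive (ii) from (i) by a group-theoretic argument based on the Levi decomposition of $H^{\circ}$ in characteristic $0$.

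For (i), I would first note that the subgroup $\tilde{\sigma}(\mathrm{St}_n(A^{\circ})) \subseteq H$ is generated by the subsets $\psi_{ij}(A^{\circ})$ ($i \neq j$), each of which is the image of the irreducible variety $A^{\circ}$ under a regular map into $H$ and contains the identity matrix; hence by (\cite{Bo}, Proposition 2.2) it is a closed connected subgroup, so $\tilde{\sigma}(\mathrm{St}_n(A^{\circ})) \subseteq H^{\circ}$. For the reverse inclusion, I would use that $\Delta = \pi_R(\tilde{\Delta})$ has finite index in $E_n(R)$ and that, by commutativity of (\ref{NC:D-3}), $\rho(\Delta) = \tilde{\sigma}(\tilde{F}(\tilde{\Delta})) \subseteq \tilde{\sigma}(\mathrm{St}_n(A^{\circ}))$; since this group is closed and $[E_n(R):\Delta] < \infty$, the closure $\overline{\rho(\Delta)}$ is a closed finite-index subgroup of $H = \overline{\rho(E_n(R))}$, hence contains $H^{\circ}$, and we get $H^{\circ} \subseteq \overline{\rho(\Delta)} \subseteq \tilde{\sigma}(\mathrm{St}_n(A^{\circ}))$, so equality holds. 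Finally, since $A^{\circ}$ is a finite-dimensional, hence unital, $K$-algebra, for $n \geq 3$ every generator of $\mathrm{St}_n(A^{\circ})$ is a commutator, e.g.\ $x_{ij}(a) = [x_{ik}(a), x_{kj}(1)]$ with $k \notin \{i,j\}$; thus $\mathrm{St}_n(A^{\circ})$ is perfect, and therefore so is its homomorphic image $H^{\circ}$.

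For (ii), I would fix a Levi decomposition $H^{\circ} = U \rtimes L$ (which exists since char $K = 0$), with $L$ reductive; by (i), $L \simeq H^{\circ}/U$ is perfect, hence semisimple. As $U$ is commutative, the conjugation action of $H^{\circ}$ on $U$ factors through $L$, making $U$ a rational $L$-module, and complete reducibility gives an $L$-module decomposition $U = U^{L} \oplus U'$, where $U^{L}$ is the fixed subspace and $U'$ has no nonzero $L$-fixed vector. One checks that an element of $U$ lies in $Z(H^{\circ})$ iff it commutes with $L$ (it commutes with $U$ automatically), i.e.\ iff it lies in $U^{L}$, so $Z(H^{\circ}) \cap U = U^{L}$. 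Since $U^{L}$ is central in $H^{\circ}$ while $U' \rtimes L$ is a closed subgroup with $U^{L} \cap (U' \rtimes L) = U^{L} \cap U' = \{e\}$ and $U^{L} \cdot (U' \rtimes L) = H^{\circ}$, we get a direct product decomposition $H^{\circ} = U^{L} \times (U' \rtimes L)$; then $[H^{\circ}, H^{\circ}] \subseteq \{e\} \times (U' \rtimes L)$, which is proper unless $U^{L} = \{e\}$, so perfectness of $H^{\circ}$ forces $U^{L} = \{e\}$, i.e.\ $Z(H^{\circ}) \cap U = \{e\}$. The remaining assertions then follow quickly: the projection $H^{\circ} \to H^{\circ}/U \simeq L$ maps $Z(H^{\circ})$ injectively onto a central subgroup of the semisimple group $L$, which is finite, so $Z(H^{\circ})$ is finite; hence each $z \in Z(H^{\circ})$ has finite order, so is semisimple (char $0$), so is conjugate into any prescribed Levi subgroup $L'$ of $H^{\circ}$ (it lies in a maximal torus of $H^{\circ}$, all maximal tori are conjugate, and $L'$ contains one of them) — and being central, $z$ is fixed by that conjugating element, so $z \in L'$. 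Thus $Z(H^{\circ})$ is contained in every Levi subgroup of $H^{\circ}$.

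I do not expect any single step to be a genuine obstacle, the argument being mostly bookkeeping with (\ref{NC:D-3}) and standard structure theory of linear algebraic groups; the one place where the hypotheses are really combined is the identification $Z(H^{\circ}) \cap U = U^{L}$ and the ensuing splitting $H^{\circ} = U^{L} \times (U' \rtimes L)$, which plays the commutativity of $U$ off against the perfectness of $H^{\circ}$.
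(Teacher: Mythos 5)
Your proof is correct and follows essentially the same route as the paper: in (i) you use the same application of (\cite{Bo}, Proposition 2.2) together with a finite-index argument — the paper gets the reverse inclusion by observing that $\tilde{\tau}(\mathrm{St}_n(A))$ is closed (being a finite union of cosets of $\tilde{\sigma}(\mathrm{St}_n(A^{\circ}))$) and hence equals $H$, while you pass through $\overline{\rho(\Delta)}$, which is the same finite-index trick — and the perfectness claim is the same elementary commutator computation in $\mathrm{St}_n(A^{\circ})$. For (ii) the paper simply invokes the argument of (\cite{IR}, Proposition 5.5), and your Levi-decomposition/complete-reducibility/perfectness argument is precisely the standard argument being cited, so nothing is missing.
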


\begin{proof}
(i) It follows from Proposition \ref{NC:P-St1} that
$\tilde{\sigma} (\mathrm{St}_n (A^{\circ}))$ coincides with the (abstract) group $\mathcal{H} \subset H$ generated by all the $\psi_{ij} (A^{\circ})$, with $i, j \in \{ 1, \dots, n \}, i \neq j.$
Since $\psi_{\alpha} (A^{\circ})$ is clearly a connected subgroup of $H$,
by (\cite{Bo}, Proposition 2.2)
$\mathcal{H}$ is Zariski-closed and connected, hence $\mathcal{H} \subset H^{\circ}.$
On the other hand, by Proposition \ref{NC:P-St2}, $\mathrm{St}_n (A^{\circ})$ is a finite-index subgroup of $\mathrm{St}_n (A),$ from which it follows that $\tilde{\sigma} (\mathrm{St}_n (A))$ is Zariski-closed. Since $\tilde{\sigma} (\mathrm{St}_n (A))$ contains $\rho (E_n (R))$, it is Zariski-dense in $H$, and therefore coincides with $H$. So, $\mathcal{H}$ is a closed subgroup of finite index in $H$, hence $\mathcal{H} \supset H^{\circ}$, and consequently $\mathcal{H} = H^{\circ}.$  Furthermore, from the definition of the Steinberg group, one
easily sees that $\mathrm{St}_n (A^{\circ})$ coincides with its commutator subgroup, so the same is true for $H^{\circ}.$

\vskip1mm

\noindent (ii) Using the fact that $H^{\circ}$ coincides with its commutator subgroup, one can now apply the argument given in the proof of (\cite{IR}, Proposition 5.5).

\end{proof}

Now set $\bar{H} = H^{\circ}/ Z(H^{\circ}).$ Since $Z(H^{\circ})$ is a closed normal subgroup of $H^{\circ}$, $\bar{H}$ is an (affine) algebraic group and the canonical map $\nu \colon H^{\circ} \to \bar{H}$ is a morphism of algebraic groups (\cite{Bo}, Theorem 6.8).

\begin{prop}\label{NC:P-FI}
Suppose $R$ is an associative ring with $2 \in R^{\times}$, $K$ is an algebraically closed field of characteristic 0,
and $n \geq 3.$ Let $\rho \colon E_n (R) \to GL_m (K)$ be a representation, set $H = \overline{\rho (E_n (R))}$, and denote by $A$ the algebraic ring associated to $\rho.$  Then $A^{\circ}$ is a finite-dimensional $K$-algebra and
there exists a homomorphism
$\bar{\sigma} \colon E_n (A^{\circ}) \to \bar{H}$ such that $\bar{\sigma} \circ \pi_{A^{\circ}} = \nu \circ \tilde{\sigma}.$
\end{prop}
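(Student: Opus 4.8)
The plan is to obtain $\bar\sigma$ by pushing the homomorphism $\nu \circ \tilde\sigma$ down through the canonical surjection $\pi_{A^\circ} \colon \mathrm{St}_n (A^\circ) \to E_n (A^\circ)$. First, since $\mathrm{char}\, K = 0$, the ring $A^\circ$ is a finite-dimensional $K$-algebra by (\cite{IR}, Proposition 2.14); moreover, as $2 \in R^\times$ and $f(R)$ is Zariski-dense in $A$, we have $2 \in A^\times$, and hence $2 \in (A^\circ)^\times$. Recall from diagram (\ref{NC:D-3}) that $\tilde\sigma$ is the restriction to $\mathrm{St}_n (A^\circ)$ of the lift $\tilde\tau$ produced in Proposition \ref{NC:P-St1}, so that $\tilde\sigma \colon \mathrm{St}_n (A^\circ) \to H^\circ$, while $\ker \pi_{A^\circ} = K_2 (n, A^\circ)$ by definition. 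Thus the entire proof comes down to verifying that $\nu \circ \tilde\sigma$ is trivial on $K_2 (n, A^\circ)$, equivalently that $\tilde\sigma (K_2 (n, A^\circ)) \subseteq Z(H^\circ) = \ker \nu$.

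This containment is the crux, and it follows by combining two facts already established. By Proposition \ref{NC:P-St2}(ii) --- this is exactly where the hypothesis $2 \in A^\times$ is used, through Proposition \ref{KT:P-BR2} --- the subgroup $K_2 (n, A^\circ)$ is central in $\mathrm{St}_n (A^\circ)$. On the other hand, Proposition \ref{NC:P-St3}(i) asserts that $\tilde\sigma (\mathrm{St}_n (A^\circ)) = H^\circ$ (as an honest equality of groups, not merely Zariski-density). Consequently every element of $\tilde\sigma (K_2 (n, A^\circ))$ commutes with every element of $\tilde\sigma (\mathrm{St}_n (A^\circ)) = H^\circ$, so it lies in $Z(H^\circ)$. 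Note that this argument yields only centrality, not triviality, of $\tilde\sigma (K_2 (n, A^\circ))$; in the division-algebra setting of Proposition \ref{NC:P-SL} one can do better because $K_2 (n, A)$ is there described explicitly via Steinberg symbols coming from a maximal subfield of $D$, which forces $\tilde\sigma$ to annihilate $K_2 (n, A)$. No such description is available for a general associative ring $R$, which is precisely why one is led here to the weaker conclusion involving $\bar H$.

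Finally, since $\pi_{A^\circ}$ is surjective with kernel $K_2 (n, A^\circ)$, and $\nu \circ \tilde\sigma \colon \mathrm{St}_n (A^\circ) \to \bar H$ is trivial on $K_2 (n, A^\circ)$ by the preceding paragraph, it factors uniquely through $E_n (A^\circ) \cong \mathrm{St}_n (A^\circ)/K_2 (n, A^\circ)$: there is a group homomorphism $\bar\sigma \colon E_n (A^\circ) \to \bar H$, determined by $\bar\sigma (\pi_{A^\circ}(x)) = \nu (\tilde\sigma (x))$ for $x \in \mathrm{St}_n (A^\circ)$, and by construction $\bar\sigma \circ \pi_{A^\circ} = \nu \circ \tilde\sigma$. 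I do not anticipate any real obstacle in this argument: the substantive input --- the centrality of $K_2 (n, A^\circ)$ in $\mathrm{St}_n (A^\circ)$ and the surjectivity of $\tilde\sigma$ onto $H^\circ$ --- is supplied by Propositions \ref{KT:P-BR2}, \ref{NC:P-St2}, and \ref{NC:P-St3}, and what remains is only the formal factorization just described.
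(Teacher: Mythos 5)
Your argument is correct and is essentially the paper's own proof: the paper likewise deduces the existence of $\bar{\sigma}$ from the equality $H^{\circ} = \tilde{\sigma}(\mathrm{St}_n(A^{\circ}))$ (Proposition \ref{NC:P-St3}) together with the centrality of $K_2(n,A^{\circ}) = \ker \pi_{A^{\circ}}$ in $\mathrm{St}_n(A^{\circ})$, and then factors $\nu \circ \tilde{\sigma}$ through $E_n(A^{\circ})$ exactly as you do. The only slight inaccuracy is attributing the centrality to the hypothesis $2 \in A^{\times}$ via Proposition \ref{NC:P-St2}(ii): since $A^{\circ}$ is a finite-dimensional algebra over the algebraically closed field $K$, Proposition \ref{KT:P-BR2} gives centrality directly, with no invertibility of $2$ needed (that hypothesis only enters for the finite summand $S$ of $A$ in Proposition \ref{NC:P-St2}(i)).
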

\begin{proof}
Since char $K = 0,$ by (\cite{IR}, Proposition 2.14) $A^{\circ}$ is a finite-dimensional $K$-algebra.
Furthermore, $H^{\circ} = \tilde{\sigma} (\mathrm{St}_n (A^{\circ}))$ by Proposition \ref{NC:P-St3} and $K_2 (n, A^{\circ}) = \ker \pi_{A^{\circ}}$ is a central subgroup of $\mathrm{St}_n (A^{\circ})$ by Proposition \ref{KT:P-BR2}, from which the existence of $\bar{\sigma}$ follows.
\end{proof}

The remaining step in the proof is to show
that the (abstract) homomorphisms $\sigma \colon E_n (A^{\circ}) \to H^{\circ}$ and $\bar{\sigma} \colon E_n (A^{\circ}) \to \bar{H}$ constructed in Propositions \ref{NC:P-SL} and \ref{NC:P-FI}, respectively, are actually morphisms of algebraic groups (see Proposition \ref{NC:P-R2} below). In the latter case, this will allow us to lift $\bar{\sigma}$
to a
morphism of algebraic groups $\sigma \colon E_n (A^{\circ}) \to H^{\circ}$ making the diagram (\ref{NC:D-3}) commute.
Our proof of rationality here will deviate from the approach of \cite{IR}, as rather than using results about the ``big cell" of $E_n (A^{\circ})$, we will instead apply the following geometric lemma.


\begin{lemma}\label{NC:L-R1}
Let $X, Y, Z$ be irreducible varieties over an algebraically closed field $K$ of characteristic 0. Suppose $s \colon X \to Y$ and $t \colon X \to Z$ are regular maps, with $s$ dominant, such that for any $x_1, x_2 \in X$ with $s(x_1) = s(x_2),$ we have $t(x_1) = t(x_2).$ Then there exists a rational map $h \colon Y \dashrightarrow Z$ such that $h \circ s= t$ on a suitable open subset of $X$.
\end{lemma}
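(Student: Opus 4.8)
The plan is to replace $t$ by the graph of $(s,t)$ and to show that the projection of this graph onto $Y$ is birational; the inverse of this birational map, composed with the projection onto $Z$, will be the desired $h$.

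First I would set $\Gamma = \overline{(s,t)(X)} \subseteq Y \times Z$, the Zariski closure of the image of the morphism $(s,t) \colon X \to Y \times Z$. Since $X$ is irreducible, $\Gamma$ is an irreducible variety and $(s,t) \colon X \to \Gamma$ is dominant. Let $p \colon \Gamma \to Y$ and $q \colon \Gamma \to Z$ be induced by the two projections, so that $p \circ (s,t) = s$ and $q \circ (s,t) = t$; since $s$ is dominant, so is $p$. The hypothesis relating $s$ and $t$ says precisely that $p$ is injective on the image $(s,t)(X)$: if $\gamma_i = (s,t)(x_i)$ and $p(\gamma_1) = p(\gamma_2)$, then $s(x_1) = s(x_2)$, hence $t(x_1) = t(x_2)$, hence $\gamma_1 = \gamma_2$. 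By Chevalley's theorem $(s,t)(X)$ is constructible and dense in $\Gamma$, hence it contains a dense open subset $U_0 \subseteq \Gamma$, and $p$ is injective on $U_0$.

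Next I would show that $p$ is birational. Injectivity of $p$ on the irreducible open set $U_0$ forces the fibres of $p|_{U_0}$ to be finite, so $\dim \Gamma = \dim U_0 \le \dim Y$; combined with the dominance of $p$ this gives $\dim \Gamma = \dim Y$, so $p$ is generically finite. The closed set $\Gamma \setminus U_0$ has dimension $< \dim Y$, so the closure of its image in $Y$ is a proper closed subset; deleting it and shrinking once more, I obtain a dense open $V \subseteq Y$ such that $p^{-1}(V) \subseteq U_0$ and $p^{-1}(V) \to V$ is finite of degree $d = [K(\Gamma) : K(Y)]$, a separable extension since $\mathrm{char}\, K = 0$. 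As $p$ is injective on $p^{-1}(V) \subseteq U_0$ and the generic fibre is reduced (by separability), we conclude $d = 1$, i.e. $p$ is birational. Letting $p^{-1} \colon Y \dashrightarrow \Gamma$ denote its rational inverse, I set $h = q \circ p^{-1} \colon Y \dashrightarrow Z$. Then $(s,t) = p^{-1} \circ s$ as rational maps $X \dashrightarrow \Gamma$ (because $p \circ (s,t) = s$ and $s$ is dominant), whence $h \circ s = q \circ p^{-1} \circ s = q \circ (s,t) = t$ as rational maps $X \dashrightarrow Z$; since $t$ is a morphism, this means $h \circ s$ and $t$ agree on a dense open subset of $X$, as required. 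Equivalently, one may argue with function fields: $p$ birational means $p^{*} \colon K(Y) \to K(\Gamma)$ is an isomorphism, so $q^{*}(K(Z)) \subseteq p^{*}(K(Y))$ determines $h$, and $(h \circ s)^{*} = s^{*} \circ h^{*} = (s,t)^{*} \circ q^{*} = t^{*}$.

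The step I expect to be the main obstacle is the passage from "$p$ generically finite and injective on a dense open set" to "$p$ birational": one has to be sure that set-theoretic injectivity really forces the degree $d = [K(\Gamma):K(Y)]$ to be $1$, which is exactly where characteristic $0$ enters, through reducedness of the generic fibre (separability), and one has to take some care in shrinking $Y$ so that the relevant preimage lands inside the open set $U_0$ on which injectivity is known. Everything else is formal once the graph $\Gamma$ has been introduced.
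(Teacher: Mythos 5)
Your argument is correct and takes essentially the same route as the paper: both pass to the closure of the image of $(s,t)$ in $Y \times Z$, use the hypothesis to get injectivity of the projection to $Y$ on a dense constructible subset of it, exploit characteristic $0$ (separability) to conclude that this projection is birational, and then set $h = q \circ p^{-1}$. The only difference is that the paper simply cites the standard criterion that an injective, dominant, separable morphism of irreducible varieties is birational, whereas you re-derive that criterion by hand via the degree of a generically finite map.
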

\begin{proof}
Let $W \subset X \times Y \times Z$ be the subset
$$
W = \{ (x,y, z) \colon s(x) = y, t(x) = z \}.
$$
Notice that $W$ is the graph of the morphism
$$
\varphi \colon X \to Y \times Z, \ \ \ x \mapsto (s(x), t(x)),
$$
so $W$ is an irreducible variety isomorphic to $X$. Now consider the projection $\mathrm{pr}_{Y \times Z} \colon X \times Y \times Z \to Y \times Z$, and let $U = \mathrm{pr}_{Y \times Z} (W)$ and $V = \bar{U}$, where the bar denotes the Zariski closure. Then $V$ is an irreducible variety. Moreover, $U$ is constructible by (\cite{H}, Theorem 4.4), so in particular contains a dense open subset $P$ of $V$, which is itself an irreducible variety. Let now $p \colon P \to Y$ be the projection to the first component. We claim that $p$ gives a birational isomorphism between $P$ and $Y$. From our assumptions, we see that $p$ is dominant, and since char $K = 0,$ $p$ is also separable. So, it follows from (\cite{H}, Theorem 4.6) that to show that $p$ is birational, we only need to verify that it is injective. Suppose that $u_1 = (y_1, z_1), u_2 = (y_2, z_2) \in P$ with $y_1 = y_2.$ By our construction, there exist $x_1, x_2 \in X$ such that $s(x_1) = y_1$, $t(x_1) = z_1$ and $s(x_2) = y_2, t(x_2) = z_2.$ Since $s(x_1) = s(x_2),$ we have $t(x_1) = t(x_2),$ so $u_1 = u_2$, as needed.

Since $p$ is birational, we can now take $h = \pi_Z \circ p^{-1}\colon Y \dashrightarrow Z,$ where $\pi_Z \colon Y \times Z \to Z$ is the projection.
\end{proof}

Now let $\rho \colon E_n (R) \to GL_m (K)$ be a representation as in Theorem \ref{NC:T-NonComm1} or \ref{NC:T-NonComm2} and denote by $A$ the algebraic ring associated to $\rho.$ Also let $Q$ be the set of all pairs $(i,j)$ with $1 \leq i, j \leq n$, $i \neq j.$ Then, as we already observed at the beginning of this section, $E_n (A^{\circ})$ is the connected algebraic group generated by the images $W_{q} = \mathrm{im} \ \varphi_{q}$ of the regular maps
$$
\varphi_{q} \colon A^{\circ} \to GL_n (A^{\circ}), \ \ \ a \mapsto e_{q} (a),
$$
for all $q \in Q.$
In particular, (\cite{Bo}, Proposition 2.2) implies that there exists a finite sequence $(\alpha(1), \dots, \alpha(v))$ in $Q$ such that
$$
E_n (A^{\circ}) = W^{\varepsilon_1}_{\alpha(1)} \cdots W^{\varepsilon_v}_{\alpha(v)},
$$
where each $\varepsilon_i = \pm 1.$ Let
$$
X = \prod_{i=1}^v (A^{\circ})_{\alpha(i)}
$$
be the product of $v$ copies of $A^{\circ}$ indexed by the $\alpha(i)$ and define a regular map $s \colon X \to E_n (A^{\circ})$ by
\begin{equation}\label{NC:E-Reg}
s(a_{\alpha(1)}, \dots, a_{\alpha(v)}) = \varphi_{\alpha(1)}(a_{\alpha(1)})^{\varepsilon_1} \cdots \varphi_{\alpha(v)} (a_{\alpha(v)})^{\varepsilon_v}.
\end{equation}
Also let
\begin{equation}\label{NC:E-Reg1}
 t\colon X \to H^{\circ} \ \ \ t(a_{\alpha(1)}, \dots, a_{\alpha(v)}) =  \psi_{\alpha(1)}(a_{\alpha(1)})^{\varepsilon_1} \cdots \psi_{\alpha(v)} (a_{\alpha(v)})^{\varepsilon_v},
\end{equation}
where the $\psi_{\alpha(i)}$ are the regular maps from Proposition \ref{NC:P-AR}. With this set-up, we can now prove

\begin{prop}\label{NC:P-R2}
The homomorphisms $\sigma \colon E_n(A^{\circ}) \to H^{\circ}$ and $\bar{\sigma} \colon E_n (A^{\circ}) \to \bar{H}$
constructed in Propositions \ref{NC:P-SL} and \ref{NC:P-FI}, respectively, are morphisms of algebraic groups.
\end{prop}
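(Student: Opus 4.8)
The plan is to realize $\sigma$ and $\bar\sigma$ as rational maps via Lemma \ref{NC:L-R1}, and then promote ``rational group homomorphism'' to ``morphism of algebraic groups'' by the standard translation argument. It is convenient to treat the two cases uniformly: write $(\tau, Y)$ for $(\sigma, H^{\circ})$ in the setting of Theorem \ref{NC:T-NonComm1} (where $\sigma$ is furnished by Proposition \ref{NC:P-SL}), and for $(\bar\sigma, \bar H)$ in the setting of Theorem \ref{NC:T-NonComm2} (where $\bar\sigma$ is furnished by Proposition \ref{NC:P-FI}); in the latter case one replaces the map $t$ of (\ref{NC:E-Reg1}) by $\nu \circ t$.

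First I would record the compatibility $\tau \circ s = t$ on all of $X$. Indeed, from $\sigma \circ \pi_{A^{\circ}} = \tilde\sigma$ (resp. $\bar\sigma \circ \pi_{A^{\circ}} = \nu \circ \tilde\sigma$), together with $\tilde\sigma(x_{ij}(a)) = \psi_{ij}(a)$ and $\pi_{A^{\circ}}(x_{ij}(a)) = e_{ij}(a)$, one obtains $\sigma(e_{ij}(a)) = \psi_{ij}(a)$ (resp. $\bar\sigma(e_{ij}(a)) = \nu(\psi_{ij}(a))$); since $\tau$ is a group homomorphism, comparing the definitions (\ref{NC:E-Reg}) of $s$ and (\ref{NC:E-Reg1}) of $t$ gives the claim. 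Now $X = \prod_{i=1}^{v}(A^{\circ})_{\alpha(i)}$ is irreducible ($A^{\circ}$ being a finite-dimensional $K$-algebra, hence an affine space), $E_n(A^{\circ})$ is irreducible as a connected algebraic group, and $Y$ is irreducible for the same reason; moreover $s$ is surjective by the choice of $(\alpha(1),\dots,\alpha(v))$, hence dominant, and $s(x_1) = s(x_2)$ forces $t(x_1) = \tau(s(x_1)) = \tau(s(x_2)) = t(x_2)$. Lemma \ref{NC:L-R1} then provides a rational map $h \colon E_n(A^{\circ}) \dashrightarrow Y$, regular on a dense open $V$, with $h \circ s = t$ on a dense open $X' \subseteq X$. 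Since $s$ is dominant, $s(s^{-1}(V) \cap X')$ is a dense constructible subset of $E_n(A^{\circ})$ on which $\tau$ and $h$ agree, so it contains a dense open subset; intersecting with $V$ shows that $\tau$ coincides with the regular map $h$ on some dense open $U \subseteq E_n(A^{\circ})$, i.e.\ $\tau$ is regular on $U$.

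It remains to pass from regularity on $U$ to a morphism. For every $g \in E_n(A^{\circ})$ the translate $gU$ is open and nonempty, and on it the homomorphism property of $\tau$ gives $\tau = \ell_{\tau(g)} \circ (\tau|_U) \circ \ell_{g^{-1}}$, where $\ell_x$ denotes left translation; each factor is a morphism of varieties, so $\tau|_{gU}$ is regular. As the $gU$ cover $E_n(A^{\circ})$, the map $\tau$ is a morphism of varieties, hence, being a homomorphism, a morphism of algebraic groups; taking $\tau = \sigma$ and then $\tau = \bar\sigma$ gives the proposition. Since the geometric content has been isolated in Lemma \ref{NC:L-R1}, the only point here that requires a little care is the transition, via constructibility of $s(s^{-1}(V) \cap X')$, from the set-theoretic identity $\tau \circ s = t$ to the regularity of $\tau$ on a dense open subset of $E_n(A^{\circ})$; the concluding translation argument is entirely routine.
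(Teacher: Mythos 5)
Your argument is correct and follows essentially the same route as the paper: one checks $\sigma\circ s=t$ (resp.\ $\bar\sigma\circ s=\nu\circ t$) for the maps $s,t$ of (\ref{NC:E-Reg}), (\ref{NC:E-Reg1}), applies Lemma \ref{NC:L-R1} to get regularity on a dense open set, and then concludes by the translation argument. The only differences are presentational: you spell out (via constructibility) why $\sigma$ agrees with the rational map produced by Lemma \ref{NC:L-R1} on a dense open subset, and you prove the translation step inline, whereas the paper invokes Lemma \ref{L:R-3} (i.e.\ \cite{IR}, Lemma 6.4) for it.
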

\begin{proof}
We will only consider $\sigma$ as the argument for $\bar{\sigma}$ is completely analogous. Set
$Y = E_n (A^{\circ})$ and $Z = H^{\circ}$, and let $s \colon X \to Y$ and $t \colon X \to Z$ be the regular maps defined in (\ref{NC:E-Reg}) and (\ref{NC:E-Reg1}).
From the construction of $\sigma,$ it is clear that
$(\sigma \circ s) (x) = t(x),$ so in particular
$s(x_1) = s(x_2)$ for $x_1, x_2 \in X$ implies that
$t(x_1) = t(x_2).$ Hence, by Lemma \ref{NC:L-R1}, $\sigma$ is a rational map. Therefore, there exists
an open subset $V \subset E_n (A^{\circ})$ such that $\sigma \vert_V$ is regular. So, it follows from the next lemma
that $\sigma \colon E_n (A^{\circ}) \to H^{\circ}$ is a morphism.
\end{proof}

\begin{lemma}\label{L:R-3} {\rm (\cite{IR}, Lemma 6.4)}
Let $K$ be an algebraically closed field and let $\mathscr{G}$ and $\mathscr{G}'$ be affine algebraic groups over $K$, with $\mathscr{G}$ connected. Suppose $f \colon \mathscr{G} \to \mathscr{G}'$ is an abstract group homomorphism\footnotemark and assume there exists a Zariski-open set $V \subset \mathscr{G}$
such that $\varphi := f \vert_V$ is a regular map. Then $f$ is a morphism of algebraic groups. \footnotetext{Here we tacitly identify $\mathscr{G}$ and $\mathscr{G}'$ with the corresponding groups $\mathscr{G}(K)$ and $\mathscr{G}'(K)$ of $K$-points.}
\end{lemma}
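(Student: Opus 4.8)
The plan is to use the group structure of $\mathscr{G}$ to propagate the regularity of $f$ from the given open set $V$ to all of $\mathscr{G}$, exploiting the fact that regularity of a map is a Zariski-local property. (Equivalently, one may first reduce to proving that $f$ is regular in a neighborhood of the identity and then translate; the argument below does both at once.)

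First I would note that for any point $g_0 \in \mathscr{G}$ the left translate $g_0 V$ is again a nonempty Zariski-open subset of $\mathscr{G}$, since $x \mapsto g_0 x$ is an automorphism of the variety $\mathscr{G}$. On $g_0 V$ the abstract homomorphism $f$ admits the factorization
$$
f(x) = f(g_0) \cdot \varphi(g_0^{-1} x),
$$
valid because $g_0^{-1} x \in V$ and $f$ is a group homomorphism. The right-hand side is a composite of morphisms of varieties: the map $x \mapsto g_0^{-1} x$ restricts to an isomorphism of $g_0 V$ onto $V$, the map $\varphi = f\vert_V$ is regular by hypothesis, and left multiplication by the fixed element $f(g_0) \in \mathscr{G}'$ is an automorphism of the variety $\mathscr{G}'$. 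Hence $f\vert_{g_0 V}$ is a regular map for every $g_0 \in \mathscr{G}$.

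Next I would observe that the family $\{\, g_0 V : g_0 \in \mathscr{G} \,\}$ is an open cover of $\mathscr{G}$: given $g \in \mathscr{G}$, choose any $v \in V$ and set $g_0 = g v^{-1}$, so that $g = g_0 v \in g_0 V$. (The connectedness of $\mathscr{G}$ guarantees in addition that $V$ is dense, but density is not actually needed for this covering statement.) Since $f$ is regular on each member of an open cover of $\mathscr{G}$, it is a morphism of varieties $\mathscr{G} \to \mathscr{G}'$; being simultaneously an abstract group homomorphism, it is a morphism of algebraic groups, as claimed.

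There is essentially no serious obstacle here. The only point requiring verification is that the displayed factorization genuinely realizes $f\vert_{g_0 V}$ as a composite of morphisms of varieties, and this is immediate from the defining axioms of an algebraic group (regularity of the translation maps and of multiplication). In particular, unlike the rationality considerations used elsewhere, this lemma needs no information about the structure of $\mathscr{G}$ beyond the group axioms and the local nature of regularity.
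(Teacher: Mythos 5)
Your argument is correct: translating $V$ by group elements, writing $f(x)=f(g_0)\,\varphi(g_0^{-1}x)$ on $g_0V$, and using that regularity is Zariski-local is exactly the standard proof of this statement, which the paper itself does not reprove but simply imports from \cite{IR}, Lemma 6.4. Your observation that connectedness (and hence density of $V$) is not actually needed for the covering step is also accurate; the hypothesis is carried along because it holds in all applications and because one must at least implicitly assume $V\neq\emptyset$, which your choice of $v\in V$ uses.
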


Theorem \ref{NC:T-NonComm1} now follows from
Propositions \ref{NC:P-SL} and \ref{NC:P-R2} with $\B = A^{\circ} (= A).$ For Theorem \ref{NC:T-NonComm2}, we again take $\B = A^{\circ}$, and it remains to show that one can lift the morphism
$\bar{\sigma} \colon E_n (A^{\circ}) \to \bar{H}$ to a morphism $\sigma \colon E_n (A^{\circ}) \to H^{\circ}$ making the diagram (\ref{NC:D-3}) commute.
This accomplished through a suitable modification of the argument used in the proof of (\cite{IR}, Proposition 6.6). For this, we need some analogs of results contained in (\cite{IR}, \S 6) regarding the structure of
$E_n (B)$ as an algebraic $K$-group, where $B$ is an arbitrary finite-dimensional algebra over an algebraically closed field $K$.
Let $J = J(B)$ be the Jacobson radical of $B$. Then by the Wedderburn-Malcev Theorem (cf. \cite{P}, Corollary 11.6), there exists a semisimple subalgebra $\bar{B} \subset B$ such that $B = \bar{B} \oplus J$ as $K$-vector spaces and $\bar{B} \simeq B/J$ as $K$-algebras. Furthermore, since $K$ is algebraically closed, the Artin-Wedderburn Theorem implies that
$$
\bar{B} = M_{n_1} (K) \times \cdots \times M_{n_r} (K).
$$
Now consider the group homomorphism
$E_n (B) \to E_n (\bar{B})$ induced by the canonical map $B \to B/J$ (notice that this is a morphism of algebraic groups as $B \to B/J$ is a homomorphism of algebraic rings --- cf. \cite{IR}, Lemma 2.9), and let $E_n (J)$ be its kernel.
It is clear that $E_n (J)$ is a closed normal subgroup of $E_n (B).$ Note that
$$
E_n(M_{n_i} (K)) \simeq E_{nn_i} (K) \simeq SL_{nn_i} (K),
$$
so $E_n (\bar{B})$ is a semisimple simply-connected algebraic group. It is also easy to see that for any $a, b \geq 1,$ we have
$$
[GL_n (B, J^a), GL_n (B, J^b)] \subset GL_n (B, J^{a+b}),
$$
where $GL_n (B, J^s) = \ker (GL_n (B) \to GL_n (B/J^s)).$ Since $J$ is a nilpotent ideal, it follows that $E_n (J)$ is a nilpotent group. In particular, we obtain that
\begin{equation}\label{E:Levi}
E_n (B) = E_n (J) \rtimes E(\bar{B})
\end{equation}
is a Levi decomposition of $E_n (B)$ (cf. \cite{IR}, Proposition 6.5).

Now, using the Levi decomposition (\ref{E:Levi}) for $B = \B$, as well as the fact that the center
$Z(H^{\circ})$ is finite (Proposition \ref{NC:P-St3}), one can directly imitate the argument of (\cite{IR}, Proposition 6.6) to conclude the proof of Theorem \ref{NC:T-NonComm2}.

Finally, to derive Theorem 1 from Theorem \ref{NC:T-NonComm1}, we first note that by Lemma \ref{KT:L-DA}, we have $K$-algebra isomorphisms
$$
\B \simeq D \otimes_k C \simeq M_s (C)
$$
where $s^2 = \dim_k D$ and $C = \overline{f(k)}$ (as above, $f \colon D \to \B$ is the ring homomorphism associated to $\rho$). Consequently, $E_n (\B) \simeq E_n (M_s (C)) \simeq E_{ns} (C).$ Moreover, since $C$ is a finite-dimensional $K$-algebra, in particular a semilocal commutative ring, $E_{ns} (C) \simeq SL_{ns} (C)$ (cf. \cite{Mat}, Corollary 2). So, using the fact that $G = {\bf SL}_{n,D}$ is $K$-isomorphic to $SL_{ns}$ (\cite{PR}, 2.3.1),  we see that $E_n (\B) \simeq G(C).$ Letting $f_C \colon k \to C$ be the restriction of $f$ to $k$, we now obtain Theorem 1.

\vskip5mm

\section{Applications to representation varieties and deformations of representations}\label{S:D}

In this section, we will prove Theorem 2. To estimate the dimension of the character variety $X_n (\Gamma)$ for an elementary subgroup $\Gamma$ as in the statement of Theorem 2, we will exploit the well-known connection, going back to A.~Weil, between the tangent space of $X_n (\Gamma)$ at a given point and the 1-cohomology of $\Gamma$ with coefficients in the space of a naturally associated representation. We then use the results of \cite{IR} on standard descriptions of representations of $\Gamma$ to relate the latter space to a certain space of derivations of the finitely generated ring $R$ used to define $\Gamma$ (cf. Proposition \ref{D:P-1}). Since the dimensions of spaces of derivations are finite and are bounded by a constant depending only on $R$, we obtain the required bound on $\dim X_n (\Gamma)$.
Throughout this section, we will work over a fixed algebraically closed field $K$ of characteristic 0.

We begin by summarizing some key definitions and basic properties related to representation and character varieties, mostly following the first two chapters of the paper of Lubotzky-Magid \cite{LM}.
Let $\Gamma$ be a finitely generated group and fix an integer $n \geq 1.$ Recall that the $n^{\mathrm{th}}$ {\it representation scheme} of $\Gamma$ is the functor $\fR_n (\Gamma)$ from the category of commutative $K$-algebras to the category of sets defined by
$$
\fR_n (\Gamma) (A) = \Hom (\Gamma, GL_n (A)).
$$
More generally, if $\mathcal{G}$ is a linear algebraic group over $K$, we let the representation scheme of $\Gamma$ with values in $\G$ be the functor $\fR (\Gamma, \G)$ defined by
$$
\fR (\Gamma, \G)(A) = \Hom (\Gamma, \G(A)).
$$
Using the fact that for any commutative $K$-algebra $A$, a homomorphism $\rho \colon \Gamma \to GL_n (A)$ is determined by the images of the generators, subject to the defining relations of $\Gamma$, one shows
that $\fR_n (\Gamma)$ is an affine $K$-scheme represented by a finitely-generated $K$-algebra $\mathfrak{A}_n (\Gamma).$ Similarly, $\fR (\Gamma, \G)$ is an affine $K$-scheme represented by a finitely-generated $K$-algebra $\mathfrak{A} (\Gamma, \G)$ (cf. \cite{LM}, Proposition 1.2). The set $\fR_n (\Gamma)(K)$ of $K$-points of $\fR_n (\Gamma)$ is then denoted by $R_n (\Gamma)$, and is called the $n^{\mathrm{th}}$~{\it representation variety} of $\Gamma.$ It is an affine variety over $K$ with coordinate ring $A_n (\Gamma) = \mathfrak{A}_n (\Gamma)_{\mathrm{red}},$ the quotient of $\mathfrak{A}_n (\Gamma)$ by its nilradical. The representation variety $R(\Gamma, \G)$ is defined analogously.

Now let $\rho_0 \in R(\Gamma, \G).$ To describe the Zariski tangent space of
$\fR (\Gamma, \G)$ at $\rho_0$, denoted by $T_{\rho_0} (\fR (\Gamma, \G))$, we will use the algebra of dual numbers $K[\varepsilon]$ (where $\varepsilon^2 = 0$). More specifically, it is well-known that $\fR (\Gamma, \G) (K[\varepsilon])$ is the tangent bundle of $\fR (\Gamma, \G)$, and therefore $T_{\rho_0} (\fR (\Gamma, \G))$
can be identified with the fibre over $\rho_0$ of the map $\mu \colon \fR (\Gamma, \G) (K[\varepsilon]) \to \fR (\Gamma, \G) (K)$ induced by the augmentation homomorphism $K[\varepsilon] \to K, \varepsilon \mapsto 0$ (cf. \cite{Bo}, AG 16.2).
In other words, we have
$$
T_{\rho_0}(\fR (\Gamma, \G)) =  \{ \rho \in \mathrm{Hom} (\Gamma, \G(K[\varepsilon])) \mid \mu \circ \rho = \rho_0 \}.
$$
For us, it will be useful to have the following alternative description of $T_{\rho_0} (\fR (\Gamma, \G)).$
Let $\tilde{\g}$ be the Lie algebra of $\G.$ Notice that $\tilde{\g}$ has a natural $\Gamma$-action given by
$$
\gamma \cdot x = \mathrm{Ad}(\rho_0 (\gamma)) x,
$$
for $\gamma \in \Gamma$ and $x \in \tilde{\g}$, where $\mathrm{Ad} \colon \G(K) \to GL(\tilde{\g})$ is the adjoint representation.
Now
$T_{\rho_0} (\fR (\Gamma, \G))$ can be identified with the space $Z^1 (\Gamma, \tilde{\g})$ of 1-cocycles (cf. \cite{LM}, Proposition 2.2). Indeed, an element $c \in Z^1 (\Gamma, \tilde{\g})$ is by definition a map $c \colon \Gamma \to \tilde{\g}$ such that
$$
c (\gamma_1 \gamma_2) = c(\gamma_1) + \mathrm{Ad}(\rho_0 (\gamma_1)) c(\gamma_2).
$$
On the other hand, we have an isomorphism
$\G (K[\varepsilon]) \simeq \tilde{\g} \rtimes \G$ given by
$$
B + C \varepsilon \mapsto (CB^{-1}, B).
$$
Hence an element $\rho \in  T_{\rho_0}(\fR(\Gamma, \G))$ is a homomorphism $\rho \colon \Gamma \to \tilde{\g} \rtimes \G$ whose projection to the second factor is $\rho_0.$ In other words, it arises from a map
$c \colon \Gamma \to \tilde{\g}$ such that the map
$$
\Gamma \to \tilde{\g} \rtimes \G, \ \ \ \  \gamma \mapsto (c (\gamma), \rho_0(\gamma))
$$
is a group homomorphism. With the above identification, this translates into the condition
$$
c (\gamma_1 \gamma_2) = c(\gamma_1) + \mathrm{Ad}(\rho_0 (\gamma_1)) c(\gamma_2),
$$
giving the required isomorphism of $T_{\rho_0} (\fR (\Gamma, \G))$ with $Z^1 (\Gamma, \tilde{\g}).$
Also notice that for any finite-index subgroup $\Delta \subset \Gamma$ (which is automatically finitely generated), the natural restriction maps $\fR (\Gamma, \G) \to \fR (\Delta, \G)$ and $Z^1 (\Gamma, \tilde{\g}) \to Z^1 (\Delta, \tilde{\g})$ induce a commutative diagram
$$
\xymatrix{T_{\rho_0} (\fR (\Gamma, \G)) \ar[r] \ar[d] & Z^1 (\Gamma, \tilde{\g}) \ar[d] \\ T_{\rho_0} (\fR (\Delta, \G)) \ar[r] & Z^1 (\Delta, \tilde{\g})}
$$
where the horizontal maps are the isomorphisms described above.

Next, let us recall a characterization of the space $B^1 (\Gamma, \tilde{\g})$ of 1-coboundaries that will be used later; for this, we need to consider the action of $\G(K)$ on $R (\Gamma, \G).$ Given $\rho_0 \in R (\Gamma, \G)$,
let $\psi_{\rho_0} \colon \G(K) \to R (\Gamma, \G)$ be the orbit map, i.e. the map defined by
$$
\psi_{\rho_0}(T) = T \rho_0 T^{-1}, \ \ \  T \in \G (K).
$$
By direct computation, one shows that under the isomorphism
$T_{\rho_0} (\fR (\Gamma, \G)) \simeq Z^1 (\Gamma, \tilde{\g})$, the image of the differential
$(d \psi_{\rho_0})_e \colon T_{e} (\G) \to T_{\rho_0} (R (\Gamma, \G)) \subset T_{\rho_0} (\fR (\Gamma, \G))$
consists of maps $\tau \colon \Gamma \to \tilde{\g}$ such that there exists $A \in \tilde{\g}$ with
$$
\tau (\gamma) = A - \mathrm{Ad}(\rho_0 (\gamma)) A
$$
for all $\gamma \in \Gamma$, i.e. the image coincides with
$B^1 (\Gamma, \tilde{\g})$ (cf. \cite{LM}, Proposition 2.3). In fact, if $O (\rho_0)$ is the orbit of $\rho_0$ in $R(\Gamma, \G)$ under the action of $\G(K)$, then $B^1 (\Gamma, \tilde{\g})$ can be identified with $T_{\rho_0} (O (\rho_0)) \subset T_{\rho_0} (R(\Gamma, \G))$ (\cite{LM}, Corollary 2.4).

As a special case of the above constructions, we can consider
the action of $GL_n (K)$ on $R_n (\Gamma).$ The $n^{\mathrm{th}}$ {\it character variety} of $\Gamma$, denoted $X_n (\Gamma)$, is by definition the (categorical) quotient of $R_n (\Gamma)$ by $GL_n (K)$, i.e. it is the affine $K$-variety with coordinate ring $A_n (\Gamma)^{GL_n (K)}.$ Let $\pi \colon R_n (\Gamma) \to X_n (\Gamma)$ be the canonical map. Then each fiber $\pi^{-1} (x)$ contains a semisimple representation, and moreover if $\rho_1, \rho_2 \in R_n (\Gamma)$ are semisimple with $\pi (\rho_1) = \pi (\rho_2),$ then $\rho_1 = T \rho_2 T^{-1}$ for some $T \in GL_n (K).$ In particular, we see that $\pi$ induces a bijection between the isomorphism classes of semisimple representations and the points of $X_n (\Gamma)$ (cf. \cite{LM}, Theorem 1.28).

Now let us turn to the proof of Theorem 2. In the remainder of this section, $\Gamma$ will be the elementary subgroup $E(\Phi, R) \subset G(R)$, where $\Phi$
is a reduced irreducible root system of rank $\geq 2$, $G$ a universal Chevalley-Demazure group scheme of type $\Phi$, and $R$ a finitely generated commutative ring such that $(\Phi, R)$ is a nice pair.
By recent work of Ershov, Jaikin, and Kassabov \cite{EJK}, it is known that $\Gamma$ has Kazhdan's property (T). In particular, $\Gamma$ is finitely generated and satisfies the condition

\vskip2mm

\noindent (FAb) \ \ \parbox[t]{15cm}{for any finite-index subgroup $\Delta \subset \Gamma,$ the abelianization $\Delta^{\mathrm{ab}} = \Delta / [\Delta, \Delta]$ is finite}

\vskip2mm
\noindent (cf. \cite{HV}). This has the following consequence.
\begin{prop}\label{D:P-KG}
(cf. \cite{AR}, Proposition 2) Let $\Gamma$ be a group satisfying {\rm (FAb)}. For any $n \geq 1$, there exists a finite collection $G_1, \dots, G_d$ of algebraic subgroups of $GL_n (K)$, such that for any completely reducible representation $\rho \colon \Gamma \to GL_n (K)$, the Zariski closure $\overline{\rho (\Gamma)}$ is conjugate to one of the $G_i.$ Moreover, for each $i$, the connected component $G_i^{\circ}$ is a semisimple group.
\end{prop}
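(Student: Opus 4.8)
The plan is to first pin down the identity component of $H := \overline{\rho(\Gamma)}$, and then to deduce the finiteness statement by controlling, uniformly in $\rho$, how $H$ is assembled from $H^{\circ}$ and the finite group $H/H^{\circ}$. I would begin by showing $H^{\circ}$ is semisimple: since $\rho$ is completely reducible, $K^{n}$ is a semisimple module over $\rho(\Gamma)$, hence over its Zariski closure $H$, hence over $H^{\circ}$, so the unipotent radical of $H^{\circ}$ acts trivially on each simple summand and thus vanishes, i.e. $H^{\circ}$ is reductive. Setting $\Delta = \rho^{-1}(H^{\circ})$ — a finite-index normal subgroup of $\Gamma$ — we have $\overline{\rho(\Delta)} = H^{\circ}$, because $\overline{\rho(\Delta)}$ has finite index in $\overline{\rho(\Gamma)} = H$ and lies in the connected group $H^{\circ}$; hence the composite $\Delta \to H^{\circ} \to T := H^{\circ}/[H^{\circ},H^{\circ}]$ has Zariski-dense image in the torus $T$. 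But this map factors through $\Delta^{\mathrm{ab}}$, which is finite by (FAb), so $T$ is trivial and $H^{\circ} = [H^{\circ},H^{\circ}]$ is semisimple. In particular, every group eventually placed on the list $G_{1},\dots,G_{d}$ will have semisimple identity component, which is the last assertion of the proposition.

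The crucial step is a bound $|H/H^{\circ}| \le M$ with $M = M(\Gamma,n)$ independent of $\rho$. Recall that $GL_{n}(K)$ has only finitely many conjugacy classes of connected semisimple subgroups (a maximal torus of such a subgroup has rank $\le n$; there are finitely many isomorphism types of semisimple $K$-groups of bounded rank; and each has only finitely many faithful $\le n$-dimensional representations up to isomorphism). Hence $H^{\circ}$, and with it $N := N_{GL_{n}(K)}(H^{\circ})$, lies in one of finitely many conjugacy classes, so the linear algebraic group $N/H^{\circ}$ embeds into some $GL_{m}(K)$ with $m$ bounded in terms of $n$. On the other hand $H = \rho(\Gamma)\cdot H^{\circ}$, so $H/H^{\circ} \cong \rho(\Gamma)/(\rho(\Gamma)\cap H^{\circ})$ is a finite quotient of $\Gamma$, and it sits inside $GL_{m}(K)$. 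Now for any finite quotient $Q$ of $\Gamma$ that embeds in $GL_{m}(K)$, Jordan's theorem on finite linear groups provides an abelian normal subgroup $B \le Q$ with $[Q:B] \le J(m)$; the preimage of $B$ in $\Gamma$ is one of the finitely many subgroups of $\Gamma$ of index $\le J(m)$ (finite in number since $\Gamma$ is finitely generated), and $B$, being abelian, is a quotient of its abelianization, which is finite by (FAb). Thus $|B|$, hence $|Q| = |H/H^{\circ}|$, is bounded purely in terms of $\Gamma$ and $n$.

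It remains to bound, for each of the finitely many conjugacy classes of semisimple subgroups $S \le GL_{n}(K)$ and each integer $q \le M$, the number of conjugacy classes of subgroups $H$ with $H^{\circ} = S$ and $|H/S| = q$. Since $H \supseteq S$, such an $H$ is the full preimage of the finite subgroup $H/S$ under the quotient map $\mathcal{N} := N_{GL_{n}(K)}(S) \to \mathcal{N}/S$; thus these $H$ are in conjugacy-preserving bijection with the finite subgroups of order $q$ of the linear algebraic group $\mathcal{N}/S$. For a fixed finite group $F$, the variety $\mathrm{Hom}(F,\mathcal{N}/S)$ is of finite type, and at a point $f$ its tangent space is $Z^{1}(F,\mathfrak{n})$, where $\mathfrak{n} = \mathrm{Lie}(\mathcal{N}/S)$ is made an $F$-module via $\mathrm{Ad}\circ f$ (exactly as in the description of $\mathfrak{R}(\Gamma,\mathcal{G})$ recalled above). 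Since $|F|$ is invertible in $K$, $H^{1}(F,\mathfrak{n}) = 0$, so $Z^{1}(F,\mathfrak{n}) = B^{1}(F,\mathfrak{n})$, which is the tangent space to the conjugacy orbit of $f$; hence every orbit is open — and therefore also closed, its complement being a union of other orbits — in $\mathrm{Hom}(F,\mathcal{N}/S)$. As the latter has finitely many irreducible components, there are finitely many orbits, and letting $F$ range over the finitely many isomorphism types of groups of order $\le M$ gives the desired finiteness. Selecting one representative $G_{i}$ in each resulting conjugacy class finishes the proof.

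The step I expect to be the main obstacle is the uniform bound $|H/H^{\circ}| \le M$: without (FAb) the proposition is simply false — a free group of rank $\ge 2$ has completely reducible representations with arbitrarily large finite image — so (FAb) must be used essentially, and here it enters precisely through the interplay of finite generation of $\Gamma$ (finitely many subgroups of each bounded index) with (FAb) (finiteness of their abelianizations), packaged via Jordan's theorem. A secondary technical point is the finiteness of the number of conjugacy classes of finite subgroups of bounded order in a fixed linear algebraic group, which I treat above via the vanishing of $H^{1}$ with adjoint coefficients.
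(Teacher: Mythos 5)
There is nothing in the paper to compare your argument with: the proposition is quoted from \cite{AR} (Proposition 2) and the paper gives no proof of it, so your proposal has to be judged on its own. Judged so, it is essentially correct and follows the expected route: semisimplicity of $H^{\circ}$ from (FAb) via the triviality of the torus $H^{\circ}/[H^{\circ},H^{\circ}]$, finiteness of the set of conjugacy classes of connected semisimple subgroups of $GL_n(K)$, a uniform bound on the component group, and rigidity of finite subgroups of a fixed algebraic group. Two caveats. First, your bound on $|H/H^{\circ}|$ uses that $\Gamma$ has only finitely many subgroups of index at most $J(m)$, i.e. that $\Gamma$ is finitely generated; this is not literally contained in the hypothesis (FAb) as stated, so you should add it as a hypothesis --- it is harmless in the paper's context, where $\Gamma = E(\Phi,R)$ has property (T) and hence is finitely generated, and it is how the cited result is meant to be read, but without it the step ``finitely many subgroups of bounded index'' has no justification. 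Second, the deduction ``$Z^1(F,\mathfrak{n})=B^1(F,\mathfrak{n})$, hence every orbit is open'' deserves the one-line Weil argument: the orbit of $f$ is smooth of dimension $\dim B^1(F,\mathfrak{n})=\dim Z^1(F,\mathfrak{n})\geq \dim_f \mathrm{Hom}(F,\mathcal{N}/S)\geq \dim$ of the orbit, so $\mathrm{Hom}(F,\mathcal{N}/S)$ is smooth at $f$, there is a unique irreducible component through $f$, and the orbit is open in it; alternatively you could simply invoke the standard fact that a linear algebraic group in characteristic zero has only finitely many conjugacy classes of finite subgroups of bounded order. With these small repairs the argument is complete, and you correctly identify where (FAb) (together with finite generation, packaged through Jordan's theorem) is the essential input.
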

Thus, if $R_n (\Gamma)_{ss}$ denotes the set of completely reducible representations $\rho \colon \Gamma \to GL_n (K)$, then we have\footnotemark \footnotetext{Observe that if $\G \subset GL_n (K)$ is an algebraic subgroup such that $\G^{\circ}$ is semisimple, then $\G$ is completely reducible, hence any representation $\rho \colon \Gamma \to GL_n (K)$ with $\overline{\rho(\Gamma)} = \G$ is completely reducible.}
$$
R_n (\Gamma)_{ss} = \bigcup_{\substack{ i \in \{1, \dots, d \}, \\ g \in GL_n(K)}} g R'(\Gamma, G_i) g^{-1},
$$
where for an algebraic subgroup $\mathcal{G} \subset GL_n(K),$ we set
$$
R' (\Gamma, \G) = \{ \rho \colon \Gamma \to \G \mid \overline{\rho(\Gamma)} = \G \}.
$$
Therefore, letting $\pi \colon R_n (\Gamma) \to X_n (\Gamma)$ be the canonical map, we obtain that
\begin{equation}\label{E:D-SS1}
X_n (\Gamma) = \bigcup_{i=1}^d \pi (R' (\Gamma, G_i)).
\end{equation}
Notice that if $\G \subset GL_n (K)$ is an algebraic group such that $\G^{\circ}$ is semisimple, then $R' (\Gamma, \G)$ is an open subvariety of $R (\Gamma, \G)$. Indeed, let
$$
R'' (\Gamma, \G) = \{ \rho \colon \Gamma \to \G \mid \overline{\rho (\Gamma)} \supset \G^{\circ} \}.
$$
Since $\G^{\circ}$ is semisimple, $R'' (\Gamma, \G)$ is easily seen to be an open in $R(\Gamma, \G)$ (cf. \cite{AR1}, Lemma 4). On the other hand, we obviously have
$$
R' (\Gamma, \G) = R'' (\Gamma, \G) \cap (R (\Gamma, \G) \setminus \cup_{i=1}^{\ell} R(\Gamma, \mathcal{H}_i)),
$$
where $\mathcal{H}_1, \dots, \mathcal{H}_{\ell}$ are the algebraic subgroups of $\G$ such that
$$
\G \supsetneq \mathcal{H}_i \supset \G^{\circ}.
$$
Now let $W \subset X_n (\Gamma)$ be an irreducible component of maximal dimension, so that $\dim X_n (\Gamma) = \dim W.$ Then it follows from (\ref{E:D-SS1}) that we can find an irreducible component $V$ of some $R' (\Gamma, G_i)$ such that $\overline{\pi (V)} = W.$ Since $\pi \vert_{V}$ is dominant and separable (as char $K = 0$), it follows from (\cite{Bo}, AG 17.3) that there exists $\rho_0 \in V$ which is a simple point (of $R'(\Gamma, G_i)$) such that $\pi (\rho_0)$ is simple and the differential
\begin{equation}\label{E:D-Diff1}
(d \pi)_{\rho_0} \colon T_{\rho_0} (V) \to T_{\pi (\rho_0)} (W)
\end{equation}
is surjective. Next, let $\psi_{\rho_0} \colon G_i \to R(\Gamma, G_i)$ be the orbit map. By the construction of $\pi$, we have $(\pi \circ \psi_{\rho_0} )(T) = \pi (\rho_0)$ for any $T \in G_i$, so $d(\pi \circ \psi_{\rho_0})_e = 0.$
On the other hand, as we noted above,
the image of the differential $(d \psi_{\rho_0})_e$ is the space $B = B^1 (\Gamma, \tilde{\g}_i)$, where $\tilde{\g}_i$ is the Lie algebra of $G_i$ with $\Gamma$-action given by $\mathrm{Ad} \circ \rho_0$. Since $\rho_0$ is a simple point, it lies on a unique irreducible component of $R'(\Gamma, G_i)$, so it follows that the image of $\psi_{\rho_0}$ (i.e. the orbit of $\rho_0$) is contained in $V$. Consequently, (\ref{E:D-Diff1}) factors through
$$
T_{\rho_0} (V)/ B \to T_{\pi (\rho_0)} (W).
$$
Since obviously $\dim_K T_{\rho_0} (V) \leq \dim_K T_{\rho_0} (\fR(\Gamma, G_i))$ and $T_{\rho_0} (\fR (\Gamma, G_i)) \simeq Z^1 (\Gamma, \tilde{\g})$,
we therefore obtain that
\begin{equation}\label{E:D-TSpDim}
\dim X_n (\Gamma) = \dim W \leq \dim_K H^1 (\Gamma, \tilde{\g}_i).
\end{equation}
Thus, the proof of Theorem 2 is now reduced to considering the following situation.
Let $\rho_0 \colon \Gamma \to GL_n (K)$ be a representation, set $\G = \overline{\rho_0 (\Gamma)}$, and let $\tilde{\g}$ be the Lie algebra of $\G$, considered as a $\Gamma$-module via $\mathrm{Ad} \circ \rho_0.$
Assume that the connected component $\G^{\circ}$ is semisimple. Then we need to give an upper bound on $\dim_K H^1 (\Gamma, \tilde{\g}).$ This will be made more precise in Proposition \ref{D:P-1} below, after some preparatory remarks.

First, notice that for the purpose of estimating $\dim_K H^1 (\Gamma, \tilde{\g}),$ we may compose $\rho_0$ with the adjoint representation and assume without loss of generality that the group $\G$ is adjoint.
Now, since $\G^{\circ}$ is semisimple, $\rho_0$ has a standard description by (\cite{IR}, Theorem 6.7),
i.e.
there exists a commutative finite-dimensional $K$-algebra $A_0$,
a ring homomorphism
\begin{equation}\label{E:2}
f_0 \colon R \to A_0
\end{equation}
with Zariski-dense image, and a morphism of algebraic groups
\begin{equation}\label{E:3}
\theta \colon G(A_0) \to \G
\end{equation}
such that on a suitable finite-index subgroup $\Delta \subset \Gamma$, we have
\begin{equation}\label{E:1}
\rho_0 \vert_{\Delta} = (\theta \circ F_0) \vert_{\Delta},
\end{equation}
where $F_0 \colon \Gamma \to G(A_0)$ is the group homomorphism induced by $f_0$. Moreover, it follows from (\cite{IR}, Proposition 5.3) that $\theta (G(A_0)) = \G^{\circ}$.




Next, let $\G_1, \dots, \G_r$ be the (almost) simple components of $\G^{\circ}$ (cf. \cite{Bo}, Proposition 14.10).
Since $\G^{\circ}$ is adjoint,
the product map
$$
\G_1 \times \cdots \times \G_r \to \G^{\circ}
$$
is an isomorphism. The following lemma gives a more detailed description of $A_0.$

\begin{lemma}\label{D:L-1}
The algebraic ring $A_0$ is isomorphic to the product
$
\underbrace{K \times \cdots \times K}_{r \ \mathrm{copies}}.
$
\end{lemma}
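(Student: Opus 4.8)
Our starting point is the way $A_0$ is produced: recall that $A_0$ is the \emph{algebraic ring associated to} $\rho_0$ in the sense of (the Chevalley-group analogue of) Proposition \ref{NC:P-AR} — namely the Zariski closure of the image under $\rho_0$ of a fixed root subgroup — so that the accompanying injective regular map $\psi_{\alpha}\colon A_0\hookrightarrow \G$ satisfies $\rho_0(e_{\alpha}(t))=\psi_{\alpha}(f_0(t))$ for $t\in R$, and, by the very construction of $\theta$ out of the lifted Steinberg representation (cf.\ \cite{IR}, \S6), $\theta(e_{\alpha}(a))=\psi_{\alpha}(a)$ for \emph{all} $a\in A_0$. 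The plan is to combine this with the Levi structure of $G$ over $A_0$ and the structure theory of semisimple groups.

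First I would decompose $A_0$. As a finite-dimensional commutative $K$-algebra it is a finite product $A_0=A_1\times\cdots\times A_m$ of local algebras, and since $K$ is algebraically closed each residue field equals $K$, so $A_i=K\oplus\mathfrak{m}_i$ with $\mathfrak{m}_i$ nilpotent. Correspondingly $G(A_0)=\prod_i G(A_i)$, and — exactly as in the Levi decomposition \eqref{E:Levi} above, but with the universal Chevalley group $G$ in place of $E_n$ (cf.\ \cite{IR}, \S6) — each $G(A_i)$ has unipotent radical $G(A_i,\mathfrak{m}_i)=\ker(G(A_i)\to G(K))$ with semisimple quotient $G(K)$. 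Hence $R_u(G(A_0))=\prod_i G(A_i,\mathfrak{m}_i)$ and $G(A_0)/R_u(G(A_0))\cong G(K)^m$. Since $\theta(G(A_0))=\G^{\circ}$ is semisimple, $\theta\bigl(R_u(G(A_0))\bigr)$ is a connected normal unipotent subgroup of $\G^{\circ}$, hence trivial, so $\theta$ descends to a surjection $\theta'\colon G(K)^m\twoheadrightarrow \G^{\circ}$.

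Writing $\bar a_i\in K$ for the class of $a_i$ modulo $\mathfrak{m}_i$, the identity $e_{\alpha}(a_i)=e_{\alpha}(\bar a_i)\,e_{\alpha}(a_i-\bar a_i)$ with $e_{\alpha}(a_i-\bar a_i)\in R_u(G(A_0))$ shows that $\psi_{\alpha}\colon A_0\to\G$, $a\mapsto\theta(e_{\alpha}(a))$, factors as $A_0\xrightarrow{q}K^m\to\G$ with $q(a_1,\dots,a_m)=(\bar a_1,\dots,\bar a_m)$. Since $\psi_{\alpha}$ is injective, so is $q$, forcing $\mathfrak{m}_i=0$ for every $i$; thus $A_i=K$ and $A_0\cong K^m$. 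It remains to see $m=r$. Now $R_u(G(A_0))=1$, so $\theta$ is itself a surjection $G(K)^m\twoheadrightarrow\G^{\circ}$, and each restriction $\bar\theta_i$ to the $i$-th factor is nontrivial — otherwise $\psi_{\alpha}(c)=\prod_i\bar\theta_i(e_{\alpha}(c_i))$ would be independent of $c_i$. As $G$ is universal and $\Phi$ irreducible, $G(K)$ is almost simple, so each image $\bar\theta_i(G(K))$ is a connected subgroup of $\G^{\circ}$ that is normalized by (indeed commutes with) the remaining factor-images, which together with it generate $\G^{\circ}$; hence it is a nontrivial connected normal subgroup of $\G^{\circ}$, i.e.\ a subproduct of the $\G_j$, and since $G(K)$ has no proper positive-dimensional normal subgroup it must be a single almost simple factor $\G_{\tau(i)}$. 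The resulting map $\tau\colon\{1,\dots,m\}\to\{1,\dots,r\}$ is surjective because the $\bar\theta_i(G(K))$ generate $\G^{\circ}$, and injective because $\tau(i_1)=\tau(i_2)$ with $i_1\neq i_2$ would make the commuting subgroups $\bar\theta_{i_1}(G(K))$ and $\bar\theta_{i_2}(G(K))$ both equal to $\G_{\tau(i_1)}$, forcing that semisimple group to be abelian. Therefore $m=r$ and $A_0\cong\underbrace{K\times\cdots\times K}_{r}$.

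The step I expect to require the most care is the second paragraph: one must make sure the injective maps $\psi_{\alpha}$ coming from the construction of $A_0$ genuinely compute $\theta\circ e_{\alpha}$ on all of $A_0$ (not merely on $f_0(R)$), and that the Levi decomposition of $G(A_i)$ over the local Artinian ring $A_i$ is available in the form used (which is the Chevalley-group counterpart of the discussion around \eqref{E:Levi}); once the reduction to $A_0\cong K^m$ and $\theta\colon G(K)^m\twoheadrightarrow\G^{\circ}$ is in place, the identification $m=r$ is a routine application of the structure theory of semisimple groups.
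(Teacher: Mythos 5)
Your argument is correct, and its second half (the identification $m=r$) is in substance the same as the paper's: the paper also gets $s\geq r$ from the surjectivity $\theta(G(A_0))=\G^{\circ}$ together with the almost simplicity of $G(K)$, and rules out $s>r$ by observing that a killed factor of $G(A_0)$ would meet each root subgroup $e_{\alpha}(A_0)$ and so contradict the injectivity of $\psi_{\alpha}$ from (\cite{IR}, Theorem 3.1) --- exactly your "each $\bar\theta_i$ is nontrivial" plus the pigeonhole/commutativity argument, which you simply spell out more explicitly via the bijection $\tau$. Where you genuinely diverge is the first half: the paper disposes of the radical in one line, quoting (\cite{IR}, Lemma 5.7) (reductivity of $\G^{\circ}$ forces $J_0=\{0\}$) and then (\cite{IR}, Proposition 2.20) to get $A_0\simeq K\times\cdots\times K$, whereas you re-derive this semisimplicity from scratch: you use the congruence subgroup $G(A_0,J_0)$ (normal, unipotent, hence with trivial image in the semisimple group $\G^{\circ}$, so that $\theta\circ e_{\alpha}$ factors through $A_0/J_0$) together with the injectivity of $\psi_{\alpha}$ to force $J_0=0$. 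This buys self-containedness --- you avoid invoking Lemma 5.7 of \cite{IR}, and in fact you only need that $G(A_0,J_0)$ is a normal unipotent closed subgroup, not the full Levi decomposition of (\cite{IR}, Proposition 6.5) that you cite --- at the cost of redoing, for this special case, the radical-versus-unipotent-radical dictionary that \cite{IR} has already packaged; the paper's route is correspondingly shorter. The two facts you flag as delicate (that $\theta(e_{\alpha}(a))=\psi_{\alpha}(a)$ holds for all $a\in A_0$, and the structure of $G$ over the Artinian algebra $A_0$) are indeed exactly what the paper itself imports from (\cite{IR}, Theorem 3.1, Proposition 4.2 and \S 6), so no gap there.
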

\begin{proof}
Let $J_0$ be the Jacobson radical of $A_0.$ Since $\G^{\circ}$ is semisimple (in particular, reductive), $J_0 = \{ 0 \}$ by (\cite{IR}, Lemma 5.7), and consequently by (\cite{IR}, Proposition 2.20), we have
$$
A_0 \simeq K^{(1)} \times \cdots \times K^{(s)},
$$
where $K^{(i)} \simeq K$ for all $i.$
Thus $G(A_0) = G(K^{(1)}) \times \cdots \times G(K^{(s)}).$ As we observed above, the map $\theta$ is surjective, so, since $G(K)$ is an almost simple group, it follows that $s \geq r.$ On the other hand, by (\cite{IR}, Theorem 3.1), for each root $\alpha \in \Phi$, there exists an injective map $\psi_{\alpha} \colon A_0 \to \G$ such that
\begin{equation}\label{D:E-Inj1}
\theta (e_{\alpha} (a)) = \psi_{\alpha} (a),
\end{equation}
where $e_{\alpha} (A_0)$ is the 1-parameter root subgroup of $G(A_0)$ corresponding to the root $\alpha$ (cf. \cite{IR}, Proposition 4.2).
Now if $s > r$, then $\theta$ would kill some simple component $G(K^{(i)})$ of $G(A_0).$ Since $G(K^{(i)})$ intersects each root subgroup $e_{\alpha} (A_0),$ the maps $\psi_{\alpha}$ would not be injective, a contradiction. So, $s = r$, as claimed.
\end{proof}

Thus, we can write $f_0 \colon R \to A_0$ as
\begin{equation}\label{D:E-hom}
f_0 (t) = (f_0^{(1)} (t), \dots, f_0^{(r)}(t)),
\end{equation}
for some ring homomorphisms $f_0^{(i)} \colon R \to K$.

\vskip2mm

\noindent {\bf Remark 4.3.} Notice that for each $i$, the image $\theta(G(K^{(i)}))$ intersects a unique simple factor of $\G^{\circ}$, say $\theta(G(K^{(i)})) \cap \G_i \neq \{ e \}$, and then $\theta(G(K^{(i)})) = \G_i.$ Furthermore, it follows from the proof of Lemma \ref{D:L-1} that $\theta$ is an isogeny,
so since char $K = 0$, the differential $(d \theta)_{e} \colon \g \to \tilde{\g}_i$ gives an isomorphism of Lie algebras. In particular, we see that the Lie algebras of all the simple factors $\G_i$ are isomorphic (in fact, they are isomorphic as $G(K)$-modules, with $G(K)$ acting via $\mathrm{Ad} \circ \theta$).


\vskip2mm

To formulate the next result, we need to introduce the following notation. Suppose $g \colon R \to K$ is a ring homomorphism. Then we will denote by $\mathrm{Der}^g (R,K)$ the space of $K$-valued derivations of $R$ with respect to $g$, i.e. an element $\delta \in \mathrm{Der}^g (R,K)$
is a map $\delta \colon R \to K$ such that for any $r_1, r_2, \in R$,
$$
\delta (r_1 + r_2) = \delta (r_1) + \delta(r_2) \ \ \ \mathrm{and} \ \ \  \delta (r_1 r_2) = \delta(r_1) g(r_2) + g(r_1) \delta (r_2).
$$


\addtocounter{thm}{1}

We now have

\begin{prop}\label{D:P-1}
Suppose $\rho_0 \colon \Gamma \to GL_n (K)$ is a linear representation and set $\G = \overline{\rho_0 (\Gamma)}$. Denote by $\tilde{\g}$ the Lie algebra of $\G$ and assume that $\G^{\circ}$ is semisimple. Then
$$
\dim_K H^1 (\Gamma, \tilde{\g}) \leq \sum_{i=1}^r \dim_K \mathrm{Der}^{f_0^{(i)}} (R,K),
$$
where the $f_0^{(i)}$ are the ring homomorphisms appearing in $\mathrm{(\ref{D:E-hom})}$.
\end{prop}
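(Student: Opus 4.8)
The plan is to realise $H^1(\Gamma,\tilde\g)$ as a space of first‑order deformations of $\rho_0$ and to transport these, via the standard description of $\rho_0$ provided by \cite{IR}, to first‑order deformations of the ring homomorphism $f_0$, which are exactly the derivations in the statement. For the reductions, as noted above we may assume $\G$ is adjoint, so that $\G^{\circ}=\G_1\times\cdots\times\G_r$ with the $\G_i$ almost simple adjoint and $\tilde\g=\bigoplus_{i=1}^r\g_i$, $\g_i=\mathrm{Lie}(\G_i)$, with $\G^{\circ}$ acting on $\g_i$ through $\G_i$. By (\cite{IR}, Theorem 6.7), Lemma \ref{D:L-1} and Remark 4.3 there are a finite‑index subgroup $\Delta\subset\Gamma$ (which we may take normal), ring homomorphisms $f_0^{(i)}\colon R\to K$ with Zariski‑dense image, and an isogeny $\theta\colon G(A_0)=G(K)^{\,r}\to\G^{\circ}$ (an isomorphism on Lie algebras, since char $K=0$) with $\rho_0\vert_{\Delta}=(\theta\circ F_0)\vert_{\Delta}$, where $F_0$ is induced by $f_0=(f_0^{(1)},\dots,f_0^{(r)})$. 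Since char $K=0$ and $\tilde\g$ is a $K$‑vector space, the composite of restriction and corestriction on $H^1(\Gamma,\tilde\g)$ is multiplication by $[\Gamma:\Delta]$, hence invertible, so restriction $H^1(\Gamma,\tilde\g)\hookrightarrow H^1(\Delta,\tilde\g)$ is injective; and since $\rho_0(\Delta)\subset\G^{\circ}$ preserves each summand $\g_i$, we get $\tilde\g\vert_{\Delta}=\bigoplus_i\g_i$ as $\Delta$‑modules and $H^1(\Delta,\tilde\g)=\bigoplus_iH^1(\Delta,\g_i)$. Thus the problem reduces to proving $\dim_KH^1(\Delta,\g_i)\le\dim_K\mathrm{Der}^{f_0^{(i)}}(R,K)$ for each $i$.

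Fix $i$. Identifying $\g_i$ with the Lie algebra $\g$ of the Chevalley group $G$ over $K$ via the $i$‑th component $\theta_i\colon G(K)\to\G_i$ of $\theta$, the $\Delta$‑action on $\g_i$ becomes $\mathrm{Ad}$ composed with $\phi_i:=F_0^{(i)}\vert_{\Delta}\colon\Delta\to G(K)$. Exactly as in the identification $\G(K[\varepsilon])\simeq\tilde\g\rtimes\G$ recalled above, $Z^1(\Delta,\g)$ is the set of group homomorphisms $\tilde\phi\colon\Delta\to G(K[\varepsilon])$ reducing to $\phi_i$ mod $\varepsilon$ (first‑order deformations of $\phi_i$), and $B^1(\Delta,\g)$ consists of those $\tilde\phi$ obtained from $\phi_i$ by conjugation by an element of $1+\varepsilon\g$; hence $H^1(\Delta,\g)$ is the set of such deformations modulo $(1+\varepsilon\g)$‑conjugacy. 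The core claim is that, after conjugation by a suitable element of $1+\varepsilon\g$, every deformation $\tilde\phi$ of $\phi_i$ is induced by functoriality from a ring homomorphism $\tilde f\colon R\to K[\varepsilon]$ lifting $f_0^{(i)}$, i.e.\ $\tilde f=f_0^{(i)}+\varepsilon\,\delta$ with $\delta\in\mathrm{Der}^{f_0^{(i)}}(R,K)$. Granting this, the assignment $[\tilde\phi]\mapsto\delta$ is a well‑defined $K$‑linear injection $H^1(\Delta,\g)\hookrightarrow\mathrm{Der}^{f_0^{(i)}}(R,K)$; summing over $i$ and combining with the injection of the previous paragraph yields Proposition \ref{D:P-1}.

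To prove the core claim one argues along the lines of \cite{IR}: the restriction of $\tilde\phi$ to a root subgroup of $\Gamma$ (intersected with $\Delta$, whose parameter set $\{t\in R:e_\alpha(t)\in\Delta\}$ is a finite‑index additive subgroup of $R$) is a deformation of an abelian unipotent subgroup, and so can — after one overall conjugation — be assumed to land in the corresponding root subgroup of $G(K[\varepsilon])$; the Steinberg relations among the root subgroups then force $\tilde\phi$ on root elements to be of the form $e_\alpha(t)\mapsto e_\alpha\!\big(f_0^{(i)}(t)+\varepsilon\,\delta(t)\big)$ for a single additive map $\delta\colon R\to K$ satisfying the Leibniz rule relative to $f_0^{(i)}$, i.e.\ $\delta\in\mathrm{Der}^{f_0^{(i)}}(R,K)$; that this recipe genuinely defines a homomorphism is the assertion that $\tilde f$ induces a well‑defined group homomorphism into $E(\Phi,K[\varepsilon])$, which follows from the functoriality of the Steinberg group together with the vanishing of the relevant relative $K_2$‑contribution, via Stein's description of $K_2$ of the semilocal ring $K[\varepsilon]$ (\cite{St2}). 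The main obstacle is that the standard description is available only on the finite‑index subgroup $\Delta$, whereas the Steinberg presentation that drives the last step lives on all of $E(\Phi,R)$; reconciling the two is the technical heart, to be handled either by carrying out the root‑subgroup normalization and relation‑checking over the finite‑index additive subgroups $e_\alpha(R)\cap\Delta$, or by first extending the deformation from $\Delta$ to $E(\Phi,R)$ using that $[\Gamma:\Delta]$ is invertible in $K$ and then invoking the argument of \cite{IR} verbatim. The remaining verifications are routine.
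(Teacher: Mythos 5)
Your reduction to the claim $\dim_K H^1(\Delta,\g_i)\le\dim_K\mathrm{Der}^{f_0^{(i)}}(R,K)$ replaces the proposition by a strictly stronger statement about the cohomology of the finite-index subgroup $\Delta$, and the "core claim" you then need --- that every deformation of $\phi_i$ defined on $\Delta$ is, up to coboundary, induced by a ring homomorphism $R\to K[\varepsilon]$ --- is exactly where the argument stops being routine; it is left unproved, and neither of your two proposed repairs works as stated. The subgroup $\Delta$ is not of the form $E(\Phi,R')$: it need not be generated by its intersections with the root subgroups, the parameter sets $\{t\in R: e_\alpha(t)\in\Delta\}$ are not stable under the products $rs$ occurring in the commutator relations, and the machinery you invoke (attaching an algebraic ring, lifting to the Steinberg group, Stein's computation of $K_2$ of $K[\varepsilon]$) is built for the full elementary group and gives no handle on homomorphisms defined only on $\Delta$. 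The alternative of "extending the deformation from $\Delta$ to $\Gamma$ using that $[\Gamma:\Delta]$ is invertible in $K$" also fails: invertibility of the index gives injectivity of restriction via $\mathrm{cor}\circ\mathrm{res}=[\Gamma:\Delta]$, but it provides no mechanism for extending a homomorphism $\Delta\to\G(K[\varepsilon])$ lifting $\rho_0\vert_\Delta$ to one on $\Gamma$. So the technical heart of your proof is genuinely missing.

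The paper avoids this issue entirely by never bounding $H^1(\Delta,\cdot)$: it only bounds the image of $\mathrm{res}\colon H^1(\Gamma,\tilde{\g})\to H^1(\Delta,\tilde{\g})$, which suffices by injectivity of restriction. A class in $H^1(\Gamma,\tilde{\g})$ is represented by a homomorphism $\rho\colon\Gamma\to\G(K[\varepsilon])$ lifting $\rho_0$ on \emph{all} of $\Gamma$; such a $\rho$ is again an abstract linear representation of $E(\Phi,R)$ whose image has closure with commutative unipotent radical, so the standard description theorem (\cite{IR}, Theorem 6.7) applies directly to $\rho$ itself, producing an algebraic ring $A$ (identified in Lemma \ref{D:L-2} as a product of copies of $K$ and $K[\varepsilon]$) and a factorization $\rho=\sigma\circ F$ on a finite-index subgroup $\Delta'$. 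One then compares this with the standard description of $\rho_0$, conjugates Levi subgroups of $\G(K[\varepsilon])^{\circ}$ by an element of $1+\varepsilon\tilde{\g}$ (which only changes the cocycle by a coboundary), and applies Schur's lemma to see that $\sigma\vert_{\g_i}$ is a scalar multiple of $(d\theta)_e$; this identifies the restricted class with one of the explicit classes $\psi(\delta_1,\dots,\delta_r)$ built from derivations, with no new Steinberg-relation analysis and no deformation theory on $\Delta$. Your outline would become correct if you replaced the reduction to $H^1(\Delta,\g_i)$ by this restriction-image argument and replaced your "core claim" by an application of \cite{IR}, Theorem 6.7, to the deformed homomorphism $\rho$ itself.
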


We first note two facts that will be needed in the proof. Let $\Lambda \subset \Gamma$ be any finite-index subgroup. Then,
as we have already seen, the space of 1-cocycles $Z^1 (\Lambda, \tilde{\g})$ can be naturally identified with the tangent space
\begin{equation}\label{E:D-TSp}
T_{\rho_0}(\fR (\Lambda, \G)) =  \{ \rho \in \mathrm{Hom} (\Lambda, \G(K[\varepsilon])) \mid \mu \circ \rho = \rho_0 \}.
\end{equation}
Also observe that the restriction map
$$
\mathrm{res}_{\Gamma/ \Lambda} \colon H^1 (\Gamma, \tilde{\g}) \to H^1 (\Lambda, \tilde{\g})
$$
is injective. Indeed, since $[\Gamma : \Lambda] < \infty,$ the corestriction map $\mathrm{cor}_{\Gamma/ \Lambda} \colon H^1 (\Lambda, \tilde{\g}) \to H^1 (\Gamma, \tilde{\g})$ is defined and the composition $\mathrm{cor}_{\Gamma/ \Lambda} \circ \mathrm{res}_{\Gamma/ \Lambda}$ coincides with multiplication by $[\Gamma : \Lambda].$ Since char~$K = 0,$ the injectivity of $\mathrm{res}_{\Gamma/ \Lambda}$ follows.

\vskip2mm

\noindent {\it Proof of Proposition \ref{D:P-1}.} Set
$$
X = \mathrm{Der}^{f_0^{(1)}} (R, K) \oplus \cdots \oplus \mathrm{Der}^{f_0^{(r)}} (R, K).
$$
and let $\Delta \subset \Gamma$ be the finite-index subgroup appearing in (\ref{E:1}).
We will show that there exists a linear map
$\psi \colon  X \to H^1 (\Delta, \tilde{\g})$ such that
$
\mathrm{res}(H^1 (\Gamma, \tilde{\g})) \subset \mathrm{im} (\psi).
$
The proposition then follows from the injectivity of the restriction map.

The map $\psi$ is constructed as follows. Choose derivations $\delta_{i} \in \mathrm{Der}^{f_0^{(i)}} (R,K)$, for $i = 1, \dots, r,$ and let
$$
B = \underbrace{K[\varepsilon] \times \cdots \times K[\varepsilon]}_{r \ \mathrm{copies}}
$$
(with $\varepsilon^2 = 0$).
Then
$$
f_{\delta_1, \dots, \delta_r} \colon R \to B, \ \ \ s \mapsto (f_0^{(1)} (s) + \delta_1 (s) \varepsilon, \dots, f_0^{(r)} (s) + \delta_r (s) \varepsilon)
$$
is a ring homomorphism, hence induces a group homomorphism
$$
F_{\delta_1, \dots, \delta_r} \colon \Gamma \to G(B)
$$
(recall that $\Gamma = E(R) \subset G(R)$).
On the other hand, we have
$$
G(B) \simeq (\g \oplus \cdots \oplus \g) \rtimes (G(K) \times \cdots \times G(K)) \simeq \mathrm{Lie} (G(A_0)) \rtimes G(A_0)
$$
and
$$
\G (K [\varepsilon]) \simeq \tilde{\g} \rtimes \G,
$$
so we can define a group homomorphism $\tilde{\theta} \colon G(B) \to \G (K [\varepsilon])$ by the formula
$$
(x, g) \mapsto ((d \theta)_e (x), \theta (g)),
$$
where $\theta \colon G(A_0) \to \G$ is the morphism appearing in (\ref{E:3}). Notice that since by Remark 4.3, the differential of $\theta$ gives a homomorphism
$(d \theta)_e \colon \g \to \tilde{\g}_i$ for each factor $\g$ of $\mathrm{Lie}(G(A_0))$, the map
$\tilde{\theta}$ can also be described as follows: let $x_1, \dots, x_r \in \g$ and $g \in G(A_0).$ Then
$$
\tilde{\theta} (x_1, \dots, x_r, g) = \left( \sum_{i=1}^r (d \theta)_e (x_i), \theta (g) \right),
$$
Now, $\tilde{\theta} \circ F_{\delta_1, \dots, \delta_r}$ is a homomorphism $\Gamma \to \G (K[\varepsilon])$, and in view of (\ref{E:1}), we have $$\mu \circ (\tilde{\theta} \circ F_{\delta_1, \dots, \delta_r} \vert_{\Delta}) = \rho_0.$$
It follows from (\ref{E:D-TSp}) that
$$
c_{\delta_1, \dots, \delta_r} := \tilde{\theta} \circ \mathrm{pr} \circ F_{\delta_1, \dots, \delta_r} \vert_{\Delta},
$$
where $\mathrm{pr} \colon G(B) \to \mathrm{Lie}(G(A_0))$ is the projection, is an element of $Z^1 (\Delta, \tilde{\g}).$ Now put
$$
\psi((\delta_1, \dots, \delta_r)) = [c_{\delta_1, \dots, \delta_r}],
$$
where $[c_{\delta_1, \dots, \delta_r}]$ denotes the class of $c_{\delta_1, \dots, \delta_r}$ in $H^1 (\Delta, \tilde{\g}).$

Conversely, suppose $\rho \colon \Gamma \to \G (K [\varepsilon])$ is a homomorphism with $\mu \circ \rho = \rho_0$. By (\cite{IR}, Proposition 2.14 and Theorem 3.1), we can associative to $\rho$ a commutative finite-dimensional $K$-algebra $A$ together with a ring homomorphism $f \colon R \to A$ with Zariski-dense image.

\begin{lemma}\label{D:L-2}
Let $A$ be the finite-dimensional commutative $K$-algebra associated to $\rho.$ Then
$$
A \simeq \tilde{K}^{(1)} \times \cdots \times \tilde{K}^{(r)},
$$
where, as above, $r$ is the number of simple components of $\G^{\circ}$, and for each
$i$, $\tilde{K}^{(i)}$ is isomorphic to either $K$ or $K[\varepsilon]$ $($with $\varepsilon^2 = 0).$
\end{lemma}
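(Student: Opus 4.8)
The plan is to pin down $A$ by combining the structure theory of commutative algebraic rings from \cite{IR}, \S 2, the standard description of $\rho_0$ already recorded in Lemma~\ref{D:L-1} and Remark~4.3, and the injectivity of the root maps $\psi_\alpha$ (\cite{IR}, Theorem~3.1). Fix a root $\alpha_0$ so that $A = \overline{\rho(e_{\alpha_0}(R))}$, with $\psi_{\alpha_0}\colon A\hookrightarrow\tilde{H} := \overline{\rho(\Gamma)}$ the inclusion, and observe that $\tilde{H}$ is an algebraic $K$-group inside $\G(K[\varepsilon])$ on which the reduction morphism $\nu\colon\G(K[\varepsilon])\to\G$ restricts to a surjection onto $\G$ with $\nu\circ\rho = \mu\circ\rho = \rho_0$. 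Since $\nu$ respects matrix multiplication and the conjugation formulas defining $\bm$, its restriction $p := \nu\vert_A\colon A\to A_0$ is a surjective homomorphism of algebraic rings with $p\circ f = f_0$; put $I = \ker p$, an ideal of $A$. Writing $F\colon\Gamma\to G(A)$ and $F_0\colon\Gamma\to G(A_0)$ for the homomorphisms induced by $f$ and $f_0$ (so $F_0 = \mathrm{red}_p\circ F$, where $\mathrm{red}_p\colon G(A)\to G(A_0)$ is induced by $p$), a comparison of the standard descriptions $\rho\vert_{\Delta'} = \Theta\circ F$ and $\rho_0\vert_{\Delta} = \theta\circ F_0$ --- with $\Theta\colon G(A)\to\tilde{H}^{\circ}$ a morphism of algebraic groups onto $\tilde{H}^{\circ}$, as in \cite{IR}, Theorem~6.7 and Proposition~5.3 --- together with $\nu\circ\rho = \rho_0$ gives $\nu\circ\Theta = \theta\circ\mathrm{red}_p$ on a Zariski-dense subgroup of $G(A)$, hence identically.

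First I would show that $A$ is a product of $r$ local rings with square-zero radicals. By \cite{IR}, Proposition~2.20 (together with Corollary~2.6 and Proposition~2.19), $A = A_1\times\cdots\times A_s$ with each $A_k$ a local Artinian algebraic ring and $A_k/J(A_k)\cong K$, and since $A_0\cong K^r$ is semisimple (Lemma~\ref{D:L-1}) we have $J(A)\subseteq I$. The key observation is that $I^{2} = 0$: every element of $\ker\nu\subset\G(K[\varepsilon])$ has the form $I_m + \varepsilon X$, and as $\bm$ is built from matrix conjugations and commutators while $\varepsilon^{2} = 0$, a short computation gives $\bm(a_1,a_2) = I_m = 0_A$ for all $a_1,a_2\in I$. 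Thus $I$ is a nilpotent ideal, so $I\subseteq J(A)$; hence $I = J(A)$ and $A/J(A)\cong A_0\cong K^r$. In particular $s = r$, each $J(A_k)^{2} = 0$, and after relabelling $p$ becomes the componentwise reduction $\prod_k A_k\to\prod_k(A_k/J(A_k))$, identified with $A_0 = \prod_k K^{(k)}$ so that $\theta(G(K^{(k)})) = \G_k$ (Remark~4.3).

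It then remains to prove $\dim_K J(A_k)\le 1$ for each $k$, which is the heart of the argument. Set $N := R_u(\tilde{H}^{\circ})$; since $\tilde{H}^{\circ}$ is an extension of $\G^{\circ}$ by $\ker(\nu)\cap\tilde{H}$, and the latter is a vector subgroup of $\ker\nu\cong(\mathrm{Lie}(\G),+)$, we get $N = \ker(\nu)\cap\tilde{H}$, a commutative unipotent group. Restrict $\Theta$ to the factor $G(A_k)$ of $G(A) = \prod_k G(A_k)$. Since $p$ kills $J(A_k)$ and $\nu\circ\Theta = \theta\circ\mathrm{red}_p$, the congruence subgroup $G(A_k, J(A_k))$ is carried by $\Theta$ into $N$; because $J(A_k)^{2} = 0$ one has $G(A_k, J(A_k))\cong\g\otimes_K J(A_k)$ as a $G(K^{(k)})$-module --- $G(K^{(k)})$ acting by the adjoint representation on $\g$ and trivially on $J(A_k)$ --- and $\Theta$ restricted to this subgroup is $G(K^{(k)})$-equivariant for the action on $N$ obtained through $\nu\circ\Theta\vert_{G(K^{(k)})} = \theta\vert_{G(K^{(k)})}$, whose image is $\G_k$. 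As $\g$ is a nontrivial irreducible $G(K^{(k)})$-module (char $K = 0$, $\Phi$ irreducible) while $\G_k$ acts trivially on $\mathrm{Lie}(\G_i)$ for $i\ne k$, the image $\Theta(G(A_k, J(A_k)))$ lies in the $\g$-isotypic summand $\mathrm{Lie}(\G_k)$ of $N\subseteq\mathrm{Lie}(\G) = \bigoplus_i\mathrm{Lie}(\G_i)$, and Schur's lemma gives $\Theta\vert_{G(A_k, J(A_k))} = \mathrm{id}_{\g}\otimes\ell_k$ for some linear functional $\ell_k\colon J(A_k)\to K$. Evaluating on $e_{\alpha_0}(a)\leftrightarrow X_{\alpha_0}\otimes a$ yields $\psi_{\alpha_0}(a) = \ell_k(a)\,\iota_k(X_{\alpha_0})$ for $a\in J(A_k)$, where $\iota_k = (d\theta)_e\colon\g\xrightarrow{\sim}\mathrm{Lie}(\G_k)$ and $X_{\alpha_0}\ne 0$; since $\psi_{\alpha_0}$ is injective (\cite{IR}, Theorem~3.1), $\ell_k$ is injective, so $\dim_K J(A_k)\le 1$. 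Consequently each $A_k$ is isomorphic either to $K$ or to $K[\varepsilon]$, which proves the lemma.

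The step I expect to be the main obstacle is the last one: locating $\Theta(G(A_k, J(A_k)))$ precisely inside $\mathrm{Lie}(\G)$. That this image is unipotent is automatic; what makes the argument work is that $G(A_k, J(A_k))$ is $\g$-isotypic as a $G(K^{(k)})$-module with $\g$ \emph{nontrivial}, so equivariance forbids the image from having any component in the factors $\mathrm{Lie}(\G_i)$, $i\ne k$, on which $\G_k$ acts trivially; only this makes the injectivity of $\psi_{\alpha_0}$ descend to the sharp bound $\dim_K J(A_k)\le 1$ rather than the useless $\dim_K J(A)\le\dim\g$. By contrast, the computation $I^{2} = 0$ is routine once $\bm$ is unwound, but it should be carried out carefully, as it is precisely what forces $p$ to be the componentwise reduction and hence $s = r$.
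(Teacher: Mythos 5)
Your proof is correct and follows essentially the same route as the paper: reduce modulo the radical to identify $A/J(A)$ with $A_0 \simeq K^r$ via the map induced by $\mu$, split $A$ into $r$ local factors with square-zero radicals, and then use the standard description of $\rho$ together with the injectivity of the root maps from (\cite{IR}, Theorem 3.1) to force $\dim_K J(A_k) \leq 1$. The only differences are expository: you verify $(\ker\nu)^2 = 0$ directly (where the paper invokes \cite{IR}, Lemma 5.7, and is terse about $\ker\nu = J(A)$), and you make the final bound explicit via $G(K^{(k)})$-equivariance and Schur's lemma, which is exactly the content of the paper's appeal to ``the same argument as in the proof of Lemma \ref{D:L-1}.''
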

\begin{proof}
Let $J$ be the Jacobson radical of $A.$ Observe that since the unipotent radical $U$ of $\overline{\rho (\Gamma)}^{\circ}$ is commutative (which follows from the fact that $\tilde{\g}$ is the unipotent radical of $\G (K[\varepsilon])$), we have $J^2 = \{ 0 \}$ by (\cite{IR}, Lemma 5.7). Now by our assumption, $\mu \circ \rho = \rho_0,$ where, $\mu \colon \G (K[\varepsilon]) \to \G (K)$ is the homomorphism induced by ring homomorphism $K[\varepsilon] \to K,$ $\varepsilon \mapsto 0.$ In particular, for any root $\alpha \in \Phi$, we have
\begin{equation}\label{D:E-RHom}
\mu (\rho (e_{\alpha} (r))) = \rho_0 (e_{\alpha} (r)),
\end{equation}
for all $r \in R.$ Since $\mu$ is a morphism of algebraic groups and the algebraic rings $A$ and $A_0$ associated to $\rho$ and $\rho_0,$ respectively, are by construction the connected components of $\overline{\rho(e_{\alpha} (R))}$ and $\overline{\rho_0 (e_{\alpha} (R))}$ for any root $\alpha$ (cf. \cite{IR}, Theorem 3.1), it follows that $\mu$ induces a surjective map $\nu \colon A \to A_0.$ Moreover, since (\ref{D:E-RHom}) holds for all roots $\alpha \in \Phi$, the construction of the ring operations on $A$ and $A_0$ given in (\cite{IR}, Theorem 3.1) implies that $\nu$ is actually a ring homomorphism. Also notice that since $J$ is commutative and nilpotent, we have
$J \subset \ker \nu$ by the definition of $\mu$. On the other hand, the ring $A_0$ is semisimple by Lemma \ref{D:L-1}, so $J = \ker \nu.$ Thus $A_0 \simeq A/J \simeq K \times \cdots \times K.$

Next, by the Wedderburn-Malcev Theorem, we can find a semisimple subalgebra $\tilde{B} \subset A$ such that $A = \tilde{B} \oplus J$ as $K$-vector spaces and $\tilde{B} \simeq A/J \simeq K \times \cdots \times K$ as $K$-algebras (cf. \cite{P}, Corollary 11.6). Let $e_i \in \tilde{B}$ be the $i$th standard basis vector. Since $e_1, \dots, e_r$ are idempotent, and we have $e_1 + \cdots + e_r = 1$ and $e_i e_j = 0$ for $i \neq j,$ it follows that we can write
$A = \oplus_{i=1}^{r} A_i,$ where $A_i = e_i A.$ Clearly, $A_i = \tilde{B}_i \oplus J_i$ with $\tilde{B}_i = e_i \tilde{B} \simeq K$ and $J_i = e_i J$; in particular, $A_i$ is a local $K$-algebra with maximal ideal $J_i$ such that $J_i^2 = \{ 0 \}.$ To complete the proof, it obviously suffices to show that $s_i := \dim_K J_i \leq 1$ for all $i.$

Now, by (\cite{IR}, Proposition 6.5), for each $i = 1, \dots, r$, we have a Levi decomposition
$$
G(A_i) = \underbrace{(\g \oplus \cdots \oplus \g)}_{s_i \ \mathrm{copies}} \rtimes G(K),
$$
where $\g$ is the Lie algebra of $G(K).$ Also, by (\cite{IR}, Theorem 6.7), there exists a morphism
\begin{equation}\label{E:D-10}
\sigma \colon G(A) \to \G(K[\varepsilon])
\end{equation}
of algebraic groups such that for a suitable subgroup of finite index $\Delta' \subset \Gamma$, we have
\begin{equation}\label{E:4}
\rho \vert_{\Delta'} = \sigma \circ F \vert_{\Delta'},
\end{equation}
where $F \colon \Gamma \to G(A)$ denotes the group homomorphism induced by $f.$ Since $\mu \circ \rho = \rho_0$, and for $\tilde{\Delta} = \Delta \cap \Delta',$ we have
$$
\rho_0 \vert_{\tilde{\Delta}} = (\theta \circ F_0) \vert_{\tilde{\Delta}} \ \ \ \mathrm{and} \ \ \ \rho \vert_{\tilde{\Delta}} = \sigma \circ F \vert_{\tilde{\Delta}}
$$
by (\ref{E:1}) and (\ref{E:4}), it follows that the diagram
\begin{equation}\label{D:E-CommDiag1}
\xymatrix{G(A) \ar[r]^{\sigma} \ar[d]^{\tilde{\nu}} & \G (K[\varepsilon]) \ar[d]_{\mu} \\ G(A_0) \ar[r]^{\theta} & \G}
\end{equation}
commutes (where $\tilde{\nu}$ is the homomorphism induced by $\nu$). Now Remark 4.3, together with the definition of $\nu$, implies that $(\theta \circ \tilde{\nu}) (G(A_i)) = \G_i$, where $\G_i$ is a simple factor of $\G.$ Since $G(A_i)$ coincides with its commutator subgroup (cf. \cite{St1}, Corollary 4.4), we obtain that $\sigma (G(A_i))$ is a subgroup of $\G (K[\varepsilon])$ that maps to $\G_i$ under $\mu$ and coincides with its commutator, so the fact that the simple factors $\G_1, \dots, \G_r$ of $\G$ commute elementwise implies that $\sigma (G(A_i)) \subset \tilde{\g}_i \rtimes \G_i,$ where $\g_i$ is the Lie algebra of $\G_i.$ On the other hand, by (\cite{IR}, Theorem 3.1),
for each root $\alpha \in \Phi$, there exists an injective map $\tilde{\psi_{\alpha}} \colon A \to \G (K[\varepsilon])$ such that
\begin{equation}
\sigma (e_{\alpha} (a)) = \tilde{\psi}_{\alpha} (a),
\end{equation}
where $e_{\alpha} (A)$ is the 1-parameter root subgroup of $G(A)$ corresponding to the root $\alpha$.
So, since $\tilde{\g}_i \simeq \g$ by Remark 4.3, the same argument as in the proof of Lemma \ref{D:L-1} shows that $s_i \leq 1.$


\end{proof}
For ease of notation, we will view $A$ as a subalgebra of
\begin{equation}\label{D:E-AR11}
\tilde{A} := \underbrace{K[\varepsilon] \times \cdots \times K[\varepsilon]}_{r \ \mathrm{copies}}
\end{equation}
Then, using the lemma and the assumption that $\mu \circ \rho = \rho_0$,
we can write the homomorphism $f \colon R \to A$ in the form
\begin{equation}\label{E:5}
f(t) = (f_0^{(1)} (t) + \delta_1 (t) \varepsilon, \dots, f_0^{(r)} (t) + \delta_{r} (t)\varepsilon)
\end{equation}
with $(\delta_1, \dots, \delta_r) \in X$ and $\delta_i = 0$ for $i = r_2+1, \dots, r.$

To describe the cohomology class corresponding to $\rho$, we will now need to analyze more closely the morphism
$\sigma$ introduced in (\ref{E:D-10}). First, we note that if $\bar{A} = A/J$ and $G(A,J)$ is the congruence subgroup
$$
G(A,J) = \ker (G(A) \to G(\bar{A})),
$$
then by (\cite{IR}, Proposition 6.5),
$$
G(A) = G(A,J) \rtimes G(\bar{A})
$$
is a Levi decomposition of $G(A)$. Now by (\cite{Bo}, Proposition 11.23), any two Levi subgroups of $(\G (K[\varepsilon]))^{\circ}$ are conjugate under an element of the unipotent radical $R_u (\G (K[\varepsilon]))^{\circ},$ which can be identified with $\G^{\circ} (K[\varepsilon], (\varepsilon)) \simeq \tilde{\g}.$ In our case, we can apply this to the groups $\sigma (G(\bar{A}))$ and $\theta (G(A_0)) = G^{\circ}$ (where $\theta$ is the morphism from (\ref{E:3})) to conclude that $B \theta(G(A_0)) B^{-1} = \sigma (G(\bar{A}))$ for some $B \in \G(K[\varepsilon], (\varepsilon)) \simeq \tilde{\g}.$ By direct computation, one sees that for any $X \in \G$ and $B = I + \varepsilon Y \in \G(K[\varepsilon], (\varepsilon))$,
$$
B X B^{-1} = (I + \varepsilon (Y - XYX^{-1}))X,
$$
which shows that
$$
\rho (\gamma) = \sigma( F(\gamma)) = ( (\sigma \circ \mathrm{pr} \circ F)(\gamma) + Y - \mathrm{Ad}(\theta (F_0 (\gamma))(Y), \theta (F_0 (\gamma)))
$$
for all $\gamma \in \tilde{\Delta} = \Delta \cap \Delta'$ (where $\Delta$ and $\Delta'$ are the finite-index subgroups of $\Gamma$ appearing in (\ref{E:1}) and (\ref{E:4}), respectively).
Since $\theta (F_0 (\gamma)) = \rho_0 (\gamma)$ for $\gamma \in \tilde{\Delta}$, we can rewrite this as
$$
\rho (\gamma) = (c(\gamma), \rho_0 (\gamma)),
$$
where
$$
c(\gamma) = (\sigma \circ \mathrm{pr} \circ F)(\gamma) + Y - \mathrm{Ad}(\rho_0(\gamma))(Y).
$$
Using (\ref{E:D-TSp}), we obtain
$c \in Z^1 (\tilde{\Delta}, \tilde{\g}).$ Now let $b_Y \in B^1 (\tilde{\Delta}, \tilde{\g})$ be the 1-coboundary defined by $b_Y (\gamma) = Y - \mathrm{Ad}(\rho_0(\gamma))Y$, and put $\tilde{c} = c - b_Y$ (thus $\tilde{c}$ and $c$ define the same element of $H^1(\tilde{\Delta}, \tilde{g})$). Then
$$
\tilde{c}(\gamma) = (\sigma \circ \mathrm{pr} \circ F)(\gamma)
$$
for all $\gamma \in \tilde{\Delta}.$
To complete the proof of the proposition, we will need the following lemma.

\begin{lemma}\label{D:L-SS}
Assume that $K$ is an algebraically closed field of characteristic 0.
Let $\pi \colon \G \to \G'$ be an isogeny of absolutely almost simple algebraic groups. Let $\g$ (resp. $\g'$) denote the Lie algebra of $\G$ (resp. $\G'$). Set
$$
\mathcal{H} = \g \rtimes \G \ \ \ and \ \ \ \mathcal{H}' = \g' \rtimes \G',
$$
where $\G$ (resp. $\G'$) acts on $\g$ (resp. $\g'$) via the adjoint representation. Then for any morphism $\varphi \colon \mathcal{H} \to \mathcal{H}'$ such that $\varphi \vert_{\G} = \pi$, there exists $a \in K$ such that
$$
\varphi(X, g) = (a (d\pi)_e (X), \pi (g)).
$$
\end{lemma}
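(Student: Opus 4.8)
The plan is to show that $\varphi$ must respect the semidirect-product structure — in particular that it carries the normal vector subgroup $\g \subset \mathcal{H}$ into $\g' \subset \mathcal{H}'$ — and then to pin down the resulting linear map $\g \to \g'$ via Schur's Lemma. First I would observe that $\g$ (resp. $\g'$) is exactly the unipotent radical of $\mathcal{H}$ (resp. $\mathcal{H}'$), being a connected normal unipotent subgroup with reductive quotient $\G$ (resp. $\G'$). Let $p' \colon \mathcal{H}' \to \G'$ be the canonical projection; then $p' \circ \varphi\vert_{\g}$ is a morphism of algebraic groups whose image is a connected unipotent subgroup of $\G'$. Since $\G$ normalizes $\g$ in $\mathcal{H}$ and $p' \circ \varphi\vert_{\G} = \pi$, this image is normalized by $\pi(\G) = \G'$, hence is a connected normal subgroup of $\G'$. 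As $\G'$ is almost simple and is not unipotent, such a subgroup must be trivial, so $\varphi(\g) \subseteq \ker p' = \g'$.

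Granting this, $\varphi_0 := \varphi\vert_{\g} \colon \g \to \g'$ is a morphism of vector groups, hence $K$-linear since $\mathrm{char}\, K = 0$. Conjugating $(X,e)$ by $(0,g)$ inside $\mathcal{H}$ gives $(\mathrm{Ad}(g)X, e)$, so applying $\varphi$ and using $\varphi\vert_{\G} = \pi$ shows that
$$
\varphi_0 (\mathrm{Ad}(g) X) = \mathrm{Ad}(\pi(g))\, \varphi_0 (X) \qquad (g \in \G, \ X \in \g).
$$
Because $\pi$ is an isogeny and $\mathrm{char}\, K = 0$, its kernel is finite and reduced, so $(d\pi)_e \colon \g \to \g'$ is an isomorphism of Lie algebras satisfying the same equivariance relation. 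Therefore $\psi := (d\pi)_e^{-1} \circ \varphi_0$ is a $K$-linear endomorphism of $\g$ commuting with $\mathrm{Ad}(g)$ for all $g \in \G$. Since $\G$ is absolutely almost simple and $\mathrm{char}\, K = 0$, the Lie algebra $\g$ is simple, and as $\G$ is connected its $\mathrm{Ad}$-invariant subspaces are precisely the ideals of $\g$; thus $\g$ is an irreducible $\G$-module. By Schur's Lemma over the algebraically closed field $K$, $\psi = a\cdot \mathrm{id}_{\g}$ for some $a \in K$, i.e. $\varphi_0 = a\,(d\pi)_e$. Finally, writing $(X,g) = (X,e)(0,g)$ and multiplying out in $\mathcal{H}'$ using $\varphi(X,e) = (a(d\pi)_e(X), e)$ and $\varphi(0,g) = (0,\pi(g))$ gives $\varphi(X,g) = (a(d\pi)_e(X), \pi(g))$, as required.

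The main obstacle is really the first step: verifying that $\varphi$ preserves the Levi/unipotent-radical decomposition, so that $\varphi(\g) \subseteq \g'$. Once that is in hand, the equivariance computation and the application of Schur's Lemma are routine, and the only other places where the characteristic-zero hypothesis enters are the linearity of morphisms between vector groups and the separability of the isogeny $\pi$ (which makes $(d\pi)_e$ an isomorphism).
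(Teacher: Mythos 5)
Your proof is correct and follows essentially the same route as the paper: the key step in both is that $\varphi\vert_{\g}$ and $(d\pi)_e$ are $\G$-equivariant maps between irreducible adjoint modules, so Schur's Lemma forces $\varphi\vert_{\g} = a\,(d\pi)_e$. The only difference is that you explicitly verify the preliminary containment $\varphi(\g)\subseteq\g'$ (via the unipotent-radical/normality argument), a point the paper's proof leaves implicit.
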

\begin{proof}
Since char $K = 0$ and $\g$ and $\g'$ are simple Lie algebras, the adjoint representations $\mathrm{Ad} \colon \G \to GL (\g)$ and $\mathrm{Ad} \colon \G' \to GL (\g')$ are both irreducible. Let us now view $\g'$ as a $\G$-module, with $\G$ acting via $\pi.$
Then both $\varphi \vert_{\g}$ and $(d \pi)_e$ are $\G$-equivariant homomorphisms of irreducible $\G$-modules. So, by Schur's lemma, $\varphi \vert_{\g} = a (d \pi)_e$ for some $a \in K$ (cf. \cite{A}, Theorem 9.6).
\end{proof}

Now, as above, we consider $A$ as a subalgebra of the algebra $\tilde{A}$ appearing in (\ref{D:E-AR11}); after possible renumbering, we may assume that, in the notation of Lemma \ref{D:L-2}, we have $\tilde{K}^{(i)} \simeq K[\varepsilon]$ for $i = 1, \dots, s$, where $s = \dim_K J(A)$, and $\tilde{K}^{(i)} \simeq K$ for $i = s+1, \dots, r.$ We will view
$G(A)$ as a subgroup of
$$
G(\tilde{A}) \simeq \mathrm{Lie}(G(A_0)) \rtimes G(A_0),
$$
and write $G(A_0) = G(K^{(1)}) \times \cdots \times G(K^{(r)})$ and $\mathrm{Lie} (G(A_0)) = \g_1 \oplus \cdots \oplus g_r,$ where $G(K^{(i)}) = G(K)$ and $\g_i = \g$ for all $i.$ We will also regard $\sigma \colon G(A) \to G(K[\varepsilon])$ as a morphism
$\sigma \colon G(\tilde{A}) \to \G(K[\varepsilon])$, with $\sigma \vert_{\g_i} = 0$ for all $i > s.$
Now since by our construction, the cocycles $c$ and $\tilde{c}$ lie in the same cohomology class, we may assume without loss of generality that $\sigma$ has the form
$$
\sigma (x_1, \dots, x_r, g) = (\sigma \vert_{\g_1 \oplus \cdots \oplus \g_r} (x_1, \dots, x_r), \theta (g)),
$$
for $(x_1, \dots, x_r, g) \in (\g_1 \oplus \cdots \oplus \g_r) \rtimes G(\bar{A}).$
By Remark 4.3, for each factor $G(K^{(i)})$ of $G(A_0)$,
the differential $(d \theta)_e \colon \g_i \to \tilde{\g}_i$ yields an isomorphism of $G(K)$-modules (with $G(K)$ acting on $\tilde{\g}_i$ via $\mathrm{Ad} \circ \theta$). Furthermore, since $\sigma \vert_{G(\bar{A})} = \theta$,
the same argument as used in the proof of Lemma \ref{D:L-2} shows that
$\sigma (\g_i) = \tilde{\g}_i$, for $i = 1, \dots, s.$
Now applying Lemma \ref{D:L-SS} to the restrictions $\sigma \vert_{\g_i \rtimes G(K^{(i)})}$ and $((d \theta)_e, \theta) \vert_{\g_i \rtimes G(K^{(i)})}$, we obtain
$$
\sigma \vert_{\g_i} = a (d \theta)_e \vert_{\g_i}
$$
for some $a \in K$ (possibly 0). Repeating this for all factors shows that for $(x_1, \dots, x_r) \in \g_1 \oplus \cdots \oplus \g_r,$ we have
$$
\sigma \vert_{\g_1 \oplus \cdots \oplus \g_r} (x_1, \dots x_r) = \sum_{i=1}^r a_i (d \theta)_e (x_i).
$$
So, replacing the element $(\delta_1, \dots, \delta_r)$ in (\ref{E:5}) by $(a_1 \delta_1, \dots, a_r \delta_r)$,
we have
$$
\tilde{c}(\gamma) = c_{\delta_1, \dots, \delta_r} (\gamma)
$$
for all $\gamma \in \tilde{\Delta}.$ Now let $\psi((\delta_1, \dots, \delta_r)) = d_{\delta_1, \dots, \delta_r} \in Z^1 (\Delta, \tilde{\g})$ and let $c_{\rho}$ be the element of $Z^1(\Gamma, \tilde{\g})$ corresponding to $\rho.$ It follows that
$$
\mathrm{res}_{\Delta/ \tilde{\Delta}} (\mathrm{res}_{\Gamma / \Delta} ([c_{\rho}])) = \mathrm{res}_{\Delta / \tilde{\Delta}} ([d_{\delta_1, \dots, \delta_r}]),
$$
where
$$
\mathrm{res}_{\Gamma / \Delta} \colon H^1 (\Gamma, \tilde{\g}) \to H^1 (\Delta, \tilde{\g}) \ \ \ \mathrm{and} \ \ \ \mathrm{res}_{\Delta/ \tilde{\Delta}} \colon H^1 (\Delta, \tilde{\g}) \to H^1 (\tilde{\Delta}, \tilde{\g})
$$
are the restriction maps. So, the injectivity of the restriction maps yields
$$
\mathrm{res}_{\Gamma / \Delta} ([c_{\rho}]) = [d_{\delta_1, \dots, \delta_r}],
$$
which shows that
$$\mathrm{res}(H^1 (\Gamma, \tilde{\g})) \subset \mathrm{im} (\psi).$$ This completes the proof of the proposition.
$\hfill \Box$

\vskip3mm

\noindent {\it Proof of Theorem 2.}
In view of (\ref{E:D-TSpDim}) and Proposition \ref{D:P-1}, it remains to show that $r \leq n$ and to give a bound on the dimension of the space $\mathrm{Der}^g (R, K)$, for any ring homomorphism $g \colon R \to K,$ which is independent of $g$. Notice that
$\G^{\circ} \subset GL_n (K)$ and $\G^{\circ} = \G_1 \times \dots \times \G_r,$ we have
$$
n \geq \mathrm{rk} \G^{\circ} = \sum_{i=1}^r \mathrm{rk} \G_i \geq r,
$$
as needed. For the second task, we have the following (elementary) lemma.
\begin{lemma}\label{D:L-Der}
Let $R$ be a finitely generated commutative ring, and denote by $d$ the minimal number of generators of $R$ (i.e. the smallest integer such that there exists a surjection $\Z[x_1, \dots, x_d] \twoheadrightarrow R$). Then for any field $K$ and ring homomorphism $g \colon R \to K$, $\dim_K \mathrm{Der}^g (R, K) \leq d.$ If, moreover, $K$ is a field of characteristic 0, $R$ is an integral domain with field of fractions $L$, and
$g$ is injective, then $\dim_K \mathrm{Der}^g (R, K) \leq \ell,$ where $\ell$ is the transcendence degree of $L$ over its prime subfield.
\end{lemma}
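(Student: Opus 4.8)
The plan is to prove the two assertions separately, both by the same elementary device: a derivation is determined by its values on a suitable generating set, so evaluation at that set yields a $K$-linear injection of $\mathrm{Der}$ into an affine space of the desired dimension.

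First I would prove $\dim_K \mathrm{Der}^g(R,K) \le d$. Fix a surjection $\pi \colon \Z[x_1,\dots,x_d] \twoheadrightarrow R$ and set $r_i = \pi(x_i)$. Any $\delta \in \mathrm{Der}^g(R,K)$ satisfies $\delta(1) = \delta(1\cdot 1) = 2\delta(1)$, hence $\delta(1) = 0$, and so $\delta$ vanishes on $\Z \cdot 1_R$; combined with additivity and the Leibniz rule, this shows that for $r = \overline{P(x_1,\dots,x_d)}$ with $P$ a polynomial one has $\delta(r) = \sum_{i=1}^d g\!\left(\overline{\partial P/\partial x_i}\right)\delta(r_i)$. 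Therefore the $K$-linear map $\mathrm{Der}^g(R,K) \to K^d$, $\delta \mapsto (\delta(r_1),\dots,\delta(r_d))$, is injective, which gives the bound; note in particular that this is the source of the inequality $c \le d$ mentioned in the introduction.

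For the second assertion, I would first reduce to the fraction field. Since $g$ is injective and $K$ is a field, $g$ extends to a field embedding $\tilde g \colon L \to K$, and the usual quotient rule $\tilde\delta(a/b) = (\delta(a)g(b) - g(a)\delta(b))/g(b)^2$ shows that every $\delta \in \mathrm{Der}^g(R,K)$ extends uniquely to a $\tilde\delta \in \mathrm{Der}^{\tilde g}(L,K)$, with restriction to $R$ as inverse; hence $\mathrm{Der}^g(R,K) \cong \mathrm{Der}^{\tilde g}(L,K)$ as $K$-vector spaces and it suffices to bound the latter by $\ell$. As $R$ is a finitely generated $\Z$-algebra of characteristic $0$, $L$ is a finitely generated field extension of its prime subfield $\Q$, of transcendence degree $\ell$. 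Since $\mathrm{char}\, K = 0$, the extension $L/\Q$ is separably generated, so I may choose a separating transcendence basis $t_1,\dots,t_\ell$ and, by the primitive element theorem, an element $\theta$ with $L = F(\theta)$, where $F = \Q(t_1,\dots,t_\ell)$, and $\theta$ separable over $F$ with minimal polynomial $f$. Then $\tilde\delta$ vanishes on $\Q$, hence is determined on $F$ by $\tilde\delta(t_1),\dots,\tilde\delta(t_\ell)$; and applying $\tilde\delta$ to $f(\theta) = 0$ gives $\tilde g(f'(\theta))\,\tilde\delta(\theta) = -\sum_j \tilde\delta(a_j)\,\tilde g(\theta)^j$, where the $a_j$ are the coefficients of $f$, with $\tilde g(f'(\theta)) \neq 0$ by separability and the injectivity of $\tilde g$. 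Thus $\tilde\delta(\theta)$ — and hence $\tilde\delta$ on all of $L = F(\theta)$ — is determined by $\tilde\delta(t_1),\dots,\tilde\delta(t_\ell)$, so $\tilde\delta \mapsto (\tilde\delta(t_1),\dots,\tilde\delta(t_\ell))$ is a $K$-linear injection $\mathrm{Der}^{\tilde g}(L,K) \hookrightarrow K^\ell$, giving $\dim_K \mathrm{Der}^{\tilde g}(L,K) \le \ell$.

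There is no genuine obstacle here; the statement is elementary. The only points needing (routine) care are the well-definedness of the extension $\delta \mapsto \tilde\delta$ to the fraction field and the use of characteristic $0$ to secure a separating transcendence basis and a separable primitive element. Alternatively, the second part can be phrased more structurally: $\mathrm{Der}^{\tilde g}(L,K) = \Hom_L(\Omega_{L/\Q}, K)$ with $K$ regarded as an $L$-module via $\tilde g$, and $\Omega_{L/\Q}$ is an $L$-vector space of dimension $\ell$ because $L/\Q$ is finitely generated and separable, which yields $\dim_K \mathrm{Der}^{\tilde g}(L,K) = \ell$ directly.
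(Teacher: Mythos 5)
Your proof is correct. The first inequality is argued exactly as in the paper: a derivation is determined by its values on a generating set, giving a linear injection into $K^d$ (your explicit chain-rule formula just makes the paper's one-line claim precise). For the second inequality you take a slightly different, but equivalent, route: you extend $\delta$ to the fraction field via the quotient rule and then bound $\mathrm{Der}^{\tilde g}(L,K)$ using a separating transcendence basis together with a primitive element, in effect re-proving by hand the fact that derivations extend uniquely along separable algebraic extensions. The paper instead stays at the ring level: it localizes at $\Z\setminus\{0\}$, applies Noether normalization to realize $R$ as an integral extension of $S=\Q[x_1,\dots,x_\ell]$ with separable fraction field extension, and cites Lang (Ch.~VII, Theorem 5.1) for the statement that a derivation of $R$ is then determined by its restriction to $S$, after which the first-part argument bounds the dimension by $\ell$. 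Both arguments rest on the same two ingredients (determination of derivations by values on a transcendence-basis-like set, plus separability in characteristic $0$), so the difference is one of packaging; your field-level version avoids Noether normalization at the cost of invoking the primitive element theorem, and your closing remark via $\Hom_L(\Omega_{L/\Q},K)$ gives the cleanest formulation, even yielding equality $\dim_K\mathrm{Der}^{\tilde g}(L,K)=\ell$.
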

\begin{proof}
Let $S= \{r_1, \dots, r_d\}$ be a minimal set of generators of $R$. Since any element $\delta \in \mathrm{Der}^g (R,K)$ is completely determined by its values on the elements of $S$, the map
$$
\delta \mapsto (\delta (r_1), \dots, \delta (r_d))
$$
defines an injection $\mathrm{Der}^g (R,K) \to K^d,$ so $\dim_K \mathrm{Der}^g (R, K) \leq d,$ as claimed.

Now suppose that $R$ is a finitely generated integral domain and $g$ is injective. Since char $K = 0$,
after possibly localizing $R$ with respect to the multiplicative set $\Z \setminus \{ 0 \}$ (which does not affect the dimension of
the space $\mathrm{Der}^g (R, K)$), we can use Noether's normalization lemma to write $R$
as an integral extension of $S = \Q [x_1, \dots, x_{\ell}]$ so that the field of fractions of $R$ is a separable extension of that of $S$.
Combining this with
the assumption that $g$ is injective, one easily sees that any derivation $\delta$ of $R$ is uniquely determined by its restriction to $S$ (cf. \cite{La}, Ch. VII, Theorem 5.1), so, in particular,
$$
\dim_K \mathrm{Der}^g (R,K) \leq \dim_K \mathrm{Der}^g (S,K) =: s.
$$
On the other hand, the argument given in the previous paragraph paragraph shows that $s \leq \ell,$ which completes the proof.
\end{proof}

\vskip2mm

\noindent {\bf Remark 4.8.} Notice that the estimate $\dim_K \mathrm{Der}^g (R,K) \leq \ell$ may not be true if $g$ is not injective. Indeed, take $K = \bar{\Q}$, and let $R_0 = \Z[X,Y]$ and $R = \Z[X,Y]/(X^3 - Y^2).$ Furthermore, let
$$
f \colon \Z[X,Y] \to \bar{\Q}, \ \ \varphi(X,Y) \mapsto \varphi (0,0)
$$
and denote by $g \colon R \to \bar{\Q}$ the induced homomorphism. The space $\mathrm{Der}^f (R_0, \bar{\Q})$ is spanned by the linearly-independent derivations $\delta_x$ and $\delta_y$ defined by
$$
\delta_x (\varphi(X,Y)) = \frac{\partial \varphi}{\partial X} (0,0) \ \ \ \delta_y (\varphi(X,Y)) = \frac{\partial \varphi}{\partial Y} (0,0),
$$
so $\dim_{\bar{\Q}} \mathrm{Der}^f (R_0, \bar{\Q}) = 2$. Now notice that the natural map
$$
\mathrm{Der}^g (R, \bar{\Q}) \to \mathrm{Der}^f (R_0, \bar{\Q})
$$
is bijective. Indeed, it is obviously injective, and since any $\delta \in \mathrm{Der}^f (R_0, \bar{\Q})$ vanishes on the elements of the ideal $(X^3 - Y^2) R_0$, it is also surjective.
Thus, $\dim_{\bar{\Q}} \mathrm{Der}^g (R, \bar{\Q}) = 2.$ On the other hand, if $L$ is the fraction field of $R$, then $\ell := \text{tr. deg.}_{\Q}L$ is 1.

\section{Applications to rigidity}\label{S:SR}

In this section, we will show how our results from \cite{IR} imply various forms of classical rigidity for the
elementary groups $E (\Phi, \I)$, where $\Phi$ is a reduced irreducible root system of rank $> 1$ and $\I$ is a ring of algebraic integers (or $S$-integers) in a number field.
It is worth mentioning that all forms of rigidity ultimately boil down to the fact that $\I$ does not admit nontrivial derivations. 

To fix notations, let $\Phi$ be a reduced irreducible root system of rank $>1$, $G$ the universal Chevalley-Demazure group scheme of type $\Phi$, and $\I$ a ring of algebraic $S$-integers in a number field $L$ such that $(\Phi, \I)$ is a nice pair. 
Furthermore, let $\Gamma = E(\Phi, \I)$ be the elementary subgroup of $G(\I).$

\begin{prop}\label{SR:P-1}
Let $\rho \colon \Gamma \to GL_m (K)$ be an abstract linear representation over an algebraically closed field $K$ of characteristic 0. Then there exist

\vskip1mm

\noindent {\rm (i)} \parbox[t]{16cm}{a finite dimensional commutative $K$-algebra
$$
A \simeq K^{(1)} \times \cdots \times K^{(r)},
$$
with $K^{(i)} \simeq K$ for all $i$;}

\vskip1mm

\noindent {\rm (ii)} \parbox[t]{15cm}{a ring homomorphism $f = (f^{(1)}, \dots, f^{(r)}) \colon \I \to A$ with Zariski-dense image, where each $f^{(i)} \colon \I \to K^{(i)}$ is the restriction to $\I$ of an embedding $\varphi_i \colon L \hookrightarrow K$, and $\varphi_1, \dots, \varphi_r$ are all \emph{distinct}; and}

\vskip1mm

\noindent {\rm (iii)} \parbox[t]{13cm}{a morphism of algebraic groups $\sigma \colon G(A) \to GL_m (K)$}

\vskip1mm

\noindent such that for a suitable subgroup
of finite index $\Delta \subset \Gamma$, we have
$$
\rho \vert_{\Delta} = \sigma \vert_{\Delta}.
$$
\end{prop}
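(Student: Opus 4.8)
The plan is to derive the statement from the standard description of $\rho$ established in \cite{IR}, and then to rigidify that description using the one arithmetic input available here: a ring of $S$-integers in a number field admits no non-trivial derivations, its fraction field $L$ having transcendence degree $0$ over $\Q$. To begin, \cite{IR} (Theorem 6.7, together with Theorem 3.1 and Proposition 2.14, the latter using $\mathrm{char}\,K = 0$) attaches to $\rho$ a finite-dimensional commutative $K$-algebra $A$ — the connected component of the algebraic ring associated to $\rho$ — a ring homomorphism $f \colon \I \to A$ with Zariski-dense image, and a morphism of algebraic groups $\sigma \colon G(A) \to \overline{\rho(\Gamma)} \subseteq GL_m(K)$ such that $\rho\vert_{\Delta} = (\sigma \circ F)\vert_{\Delta}$ on a finite-index subgroup $\Delta \subseteq \Gamma$, where $F$ is induced by $f$. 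Everything in (i)--(iii) beyond this amounts to pinning down the structure of $A$ and of $f$.

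The crux — and the only place the arithmetic of $\I$ enters — is to show that the Jacobson radical $J = J(A)$ vanishes, and this is the step I expect to require the most care. I would argue by contradiction. If $J \neq 0$, then $J \neq J^{2}$ by Nakayama's lemma, so after replacing $A$ by $A/J^{2}$ — which keeps the image of $f$ Zariski-dense, surjective $K$-algebra maps being dominant — we may assume $J^{2} = 0$ and $J \neq 0$. Since $K$ is algebraically closed, Artin--Wedderburn decomposes $A$ into local factors, $A \simeq \prod_{i=1}^{r} A_i$ with $A_i = K e_i \oplus J_i$, $J_i^{2} = 0$, and $J_i \neq 0$ for at least one $i$. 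Writing the $i$-th component of $f$ as $f_i = g_i + \delta_i$, where $g_i \colon \I \to K$ is a ring homomorphism and $\delta_i \colon \I \to J_i$, multiplicativity of $f_i$ together with $J_i^{2} = 0$ forces $\delta_i(st) = g_i(s)\delta_i(t) + g_i(t)\delta_i(s)$; that is, $\delta_i$ is a $g_i$-derivation with values in the $K$-vector space $J_i$. Now $g_i$ is injective: its kernel is a prime ideal of $\I$, and a non-zero one would give a finite residue field, impossible since $\mathrm{char}\,K = 0$. As $L$ has transcendence degree $0$ over $\Q$, Lemma \ref{D:L-Der} gives $\mathrm{Der}^{g_i}(\I, K) = 0$, whence $\delta_i = 0$ for every $i$ and $f(\I) \subseteq \bigoplus_i K e_i \subsetneq A$, contradicting Zariski-density. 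Hence $J = 0$, and Artin--Wedderburn now gives $A \simeq K^{(1)} \times \cdots \times K^{(r)}$ with each $K^{(i)} \simeq K$, which is (i).

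With $A$ of this form, write $f = (f^{(1)}, \dots, f^{(r)})$ where $f^{(i)} \colon \I \to K$ is a ring homomorphism. Each $f^{(i)}$ is injective by the residue-field argument above, hence extends uniquely to a field embedding $\varphi_i \colon L \hookrightarrow K$ (every non-zero element of $\I$ becoming invertible in the field $K$), and $f^{(i)} = \varphi_i\vert_{\I}$; this is (ii) apart from distinctness. The $\varphi_i$ are pairwise distinct, since if $\varphi_i = \varphi_j$ for some $i \neq j$ then $f(\I)$ would lie in the proper closed subset $\{ x_i = x_j \} \subseteq A$, against Zariski-density. Finally, $\sigma \colon G(A) \to \overline{\rho(\Gamma)} \subseteq GL_m(K)$ and the relation $\rho\vert_{\Delta} = (\sigma \circ F)\vert_{\Delta}$, carried along from \cite{IR}, give (iii).

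The main obstacle is the vanishing of $J(A)$: one must check that the passage to $A/J^{2}$ really preserves Zariski-density of $f(\I)$, and that the derivation identity is extracted correctly once $J_i^{2} = 0$; underlying all of this is the non-trivial (but available) fact that $\rho$ has a standard description at all, into which the hypotheses $\mathrm{char}\,K = 0$ and $(\Phi,\I)$ nice really go. Once $J = 0$, the rest is elementary commutative algebra and number theory.
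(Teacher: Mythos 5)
There is a genuine gap at the very first step. You invoke the standard description of $\rho$ from \cite{IR}, Theorem~6.7 as an unconditionally "available" fact, but that theorem (like its noncommutative analogue, Theorem~\ref{NC:T-NonComm2} of this paper) carries the hypothesis that the unipotent radical of $H^{\circ}$, $H = \overline{\rho(\Gamma)}$, is \emph{commutative}. Nothing in the hypotheses of Proposition~\ref{SR:P-1} guarantees this a priori, and removing that restriction for $\Gamma = E(\Phi,\I)$ is precisely the main content of the paper's proof. The paper first proves that $H^{\circ}$ is automatically reductive: assuming the unipotent radical $U$ is nontrivial, it passes to $\check{H} = H/[U,U]$, whose connected component \emph{does} have commutative unipotent radical, applies Theorem~6.7 of \cite{IR} to the induced representation $\check{\rho}$, and then uses the no-derivations argument (Lemma~\ref{D:L-Der}, together with \cite{IR}, Lemma~5.7 giving $J^{2}=0$) to show the associated algebra $\check{A}$ is reduced, hence $\check{A}\simeq K\times\cdots\times K$; surjectivity of $\check{\sigma}$ onto $\check{H}^{\circ}$ (\cite{IR}, Proposition~5.3) then forces $\check{H}^{\circ}$ to be semisimple, whence $U=[U,U]$ and, $U$ being nilpotent, $U=\{e\}$, a contradiction. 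Only after this is Theorem~6.7 applied to $\rho$ itself. Your derivation argument is thus deployed at the wrong stage: it correctly kills the radical of $A$ \emph{once the standard description exists}, but it cannot by itself produce the standard description, which is exactly what fails without the commutativity (here: triviality) of the unipotent radical.

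Once that reductivity step is supplied, the remainder of your argument is essentially the paper's: the radical of $A$ vanishes because $\I$ admits no nonzero $g$-derivations into $K$ (your reduction modulo $J^{2}$ is a harmless variant of the paper's appeal to \cite{IR}, Lemma~5.7 and Proposition~2.20), each $f^{(i)}$ is injective since nonzero ideals of $\I$ have finite index while $\operatorname{char} K = 0$, hence extends to an embedding $\varphi_i \colon L \hookrightarrow K$, and the $\varphi_i$ are pairwise distinct by Zariski-density. So the fix is to prepend the reductivity argument (or an equivalent one) before citing \cite{IR}, Theorem~6.7 for $\rho$.
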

\begin{proof}
Let $H = \overline{\rho (\Gamma)}$, where, as before, the bar denotes Zariski closure. We begin by showing that the connected component $H^{\circ}$ is automatically reductive. Suppose this is not the case and let $U$ be the unipotent radical of $H^{\circ}.$ Since the commutator subgroup $U' = [U,U]$ is a closed normal subgroup of $H$, the quotient $\check{H} = H/U'$ is affine, so we have a closed embedding $\iota \colon \check{H} \to GL_{m'} (K)$ for some $m'$.
Then, $\check{\rho} = \iota \circ \pi \circ \rho,$ where $\pi \colon H \to \check{H}$ is the quotient map, is a linear representation of $\Gamma$ such that $\overline{\check{\rho} (\Gamma)}^{\circ} = \check{H}^{\circ}$ has commutative unipotent radical. So, we can now apply (\cite{IR}, Theorem 6.7) to obtain a finite-dimensional commutative
$K$-algebra $\check{A}$, a ring homomorphism $\check{f} \colon \I \to \check{A}$ (which is injective as any nonzero ideal in $\I$ has finite index) with Zariski-dense image, and a morphism $\check{\sigma} \colon G(\check{A}) \to \check{H}$ of algebraic groups such that for a suitable finite-index subgroup $\check{\Delta} \subset \Gamma$, we have
$$
\check{\rho} \vert_{\check{\Delta}} = (\check{\sigma} \circ \check{F})\vert_{\check{\Delta}},
$$
where $\check{F} \colon \Gamma \to G(\check{A})$ is the group homomorphism induced by $\check{f}.$

Now let $J$ be the Jacobson radical of $\check{A}$. Since $\check{H}^{\circ}$ has commutative unipotent radical, $J^2 = \{ 0 \}$ by (\cite{IR}, Lemma 5.7). We claim that in fact $J = \{ 0 \}.$
Indeed, using the Wedderburn-Malcev Theorem as in the proof of Lemma \ref{D:L-2}, we can write $\check{A} = \oplus_{i=1}^r \check{A}_i$, where for each $i$, $\check{A}_i = K \oplus J_i$ is a finite-dimensional local $K$-algebra with maximal ideal $J_i$ such that $J_i^2 = \{ 0 \}.$ Then it suffices to show that $J_i = \{ 0 \}$ for all $i$.
So, we may assume that $\check{A}$ is itself a local $K$-algebra of this form. Then, fixing a $K$-basis $\{\varepsilon_1, \dots, \varepsilon_s \}$ of $J$, we have
$$
\check{f}(x) = f_0(x) + \delta_1 (x) \varepsilon_1 + \cdots + \delta_s (x) \varepsilon_s,
$$
where $f_0 \colon \I \to K$ is an injective ring homomorphism and $\delta_1, \dots, \delta_s \in \mathrm{Der}^{f_0} (\I, K).$
On the other hand, since the fraction field of $\I$ is a number field, it follows from Lemma \ref{D:L-Der} that the
derivations $\delta_1, \dots, \delta_s$ are identically zero. So, the fact that
$\check{f}$ has Zariski-dense image forces $J = \{ 0 \}.$ Consequently,
$\check{A} \simeq K \times \cdots \times K.$

Now by (\cite{IR}, Proposition 5.3), $\check{\sigma} \colon G(\check{A}) \to \check{H}^{\circ}$ is surjective,
so $\check{H}^{\circ}$ is semisimple, in particular reductive (\cite{Bo}, Proposition 14.10). It follows that $U = [U, U]$ (cf. \cite{Bo}, Corollary 14.11), and hence, being a nilpotent group, $U = \{ e \}$, which contradicts our original assumption. Thus, $H^{\circ}$ must be reductive, as claimed.

We can now apply (\cite{IR}, Theorem 6.7) to $\rho$ to obtain
a finite-dimensional commutative $K$-algebra $A$, a ring homomorphism $f \colon \I \to A$ with Zariski-dense image, and a morphism $\sigma \colon G(A) \to H$ of algebraic groups such that for a suitable subgroup of finite index $\Delta \subset \Gamma$, we have
$$
\rho \vert_{\Delta} = (\sigma \circ F) \vert_{\Delta}.
$$
Moreover, the fact that $H^{\circ}$ is reductive implies that
$A = K \times \cdots \times K$ (cf. \cite{IR}, Proposition 2.20 and Lemma 5.7). So, we can write
$f = (f^{(1)}, \dots, f^{(r)})$, for some ring homomorphisms $f^{(1)}, \dots, f^{(r)} \colon \I \to K$.
It is easy to see that all of the $f^{(i)}$ are injective,
and since $L$ is the fraction field of $\I$, it follows that each homomorphism $f^{(i)}$ is a restriction to $\I$ of an embedding  $\varphi_i \colon L \hookrightarrow K.$ Finally, since $f$ has Zariski-dense image,
all of the $\varphi_i$ must be {\it distinct}. This completes the proof.
\end{proof}

\vskip2mm

\noindent Keeping the notations of the proposition, we have the following

\begin{cor}\label{SR:C-1}
Any representation $\rho \colon \Gamma \to GL_m (K)$ is completely reducible.
\end{cor}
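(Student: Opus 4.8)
The plan is to deduce the corollary from Proposition \ref{SR:P-1}, and then to transport complete reducibility first from $G(A)$ down to the finite-index subgroup furnished by that proposition, and then back up to $\Gamma$. Applying Proposition \ref{SR:P-1}, I would obtain a finite-index subgroup $\Delta \subset \Gamma$, a commutative $K$-algebra $A \simeq K^{(1)} \times \cdots \times K^{(r)}$ with each $K^{(i)} \simeq K$, a ring homomorphism $f \colon \I \to A$ with Zariski-dense image, and a morphism of algebraic groups $\sigma \colon G(A) \to GL_m(K)$ such that $\rho\vert_\Delta = (\sigma \circ F)\vert_\Delta$, where $F \colon \Gamma \to G(A)$ is the homomorphism induced by $f$. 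Since $A \simeq K^{r}$, we have $G(A) \simeq G(K) \times \cdots \times G(K)$ ($r$ factors), which is a connected semisimple algebraic $K$-group; because $\mathrm{char}\,K = 0$, the rational representation $\sigma$ makes $K^m$ into a completely reducible $G(A)$-module (reductive groups are linearly reductive in characteristic zero).

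Next I would check that $F(\Delta)$ is Zariski-dense in $G(A)$. Since $f$ has Zariski-dense image and each root map $e_\alpha$ is continuous with closed image $e_\alpha(A)$, the closure $\overline{F(\Gamma)}$ contains $e_\alpha(A)$ for every $\alpha \in \Phi$, hence contains the subgroup they generate, namely $E(\Phi,A) = G(A)$ (as $A$ is a finite product of fields); thus $\overline{F(\Gamma)} = G(A)$. As $[\Gamma:\Delta] < \infty$, the image $F(\Delta)$ has finite index in $F(\Gamma)$, so $\overline{F(\Delta)}$ is a finite-index closed subgroup of the connected group $G(A)$, forcing $\overline{F(\Delta)} = G(A)$. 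Consequently, for any $\Delta$-stable subspace $W \subseteq K^m$, the stabilizer $\{g \in G(A) : \sigma(g)W \subseteq W\}$ is a closed subgroup containing $F(\Delta)$, hence equals $G(A)$; so $W$ is a $G(A)$-submodule and therefore has a $G(A)$-stable, in particular $\Delta$-stable, complement. Thus $\rho\vert_\Delta$ is completely reducible.

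Finally, I would pass from $\Delta$ to $\Gamma$ by averaging. Given a $\Gamma$-submodule $W \subseteq K^m$, I would choose a $\Delta$-equivariant projection $p \colon K^m \to W$ (which exists since $\rho\vert_\Delta$ is completely reducible) and set $\tilde{p} = [\Gamma:\Delta]^{-1} \sum_{g} \rho(g)\, p\, \rho(g)^{-1}$, where $g$ runs over a set of coset representatives of $\Delta$ in $\Gamma$. The $\Delta$-equivariance of $p$ makes $\tilde{p}$ independent of the choice of representatives, hence $\Gamma$-equivariant; it maps $K^m$ onto $W$ and restricts to the identity on $W$, so $\ker \tilde{p}$ is a $\Gamma$-stable complement of $W$ (here $\mathrm{char}\,K = 0$ is used to invert $[\Gamma:\Delta]$). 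Hence $K^m$ is a semisimple $\Gamma$-module, i.e. $\rho$ is completely reducible.

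I do not expect a genuine obstacle, since all the substance is already in Proposition \ref{SR:P-1}; the only delicate point is the passage through the finite-index subgroup $\Delta$, precisely where restriction of a semisimple module can cease to be semisimple, and this is handled by the connectedness of $G(A)$ together with averaging in characteristic zero. (One could alternatively avoid $\Delta$ altogether: the proof of Proposition \ref{SR:P-1} already shows that $\overline{\rho(\Gamma)}^{\circ}$ is reductive, and a rational representation of a linear algebraic group whose identity component is reductive is completely reducible in characteristic zero, whence the same follows for $\rho$ after restricting to the Zariski-dense subgroup $\rho(\Gamma)$.)
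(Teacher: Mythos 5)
Your proof is correct and follows essentially the same route as the paper: apply Proposition \ref{SR:P-1}, use semisimplicity of $G(A)$ together with $\mathrm{char}\,K=0$ to get complete reducibility of $\rho\vert_{\Delta}$, and then pass from the finite-index subgroup $\Delta$ back to $\Gamma$. You merely spell out two steps the paper leaves implicit (the Zariski-density of $F(\Delta)$ in $G(A)$, which justifies transferring semisimplicity from the $G(A)$-module to the $\Delta$-module, and the standard averaging argument for the finite-index step), both of which you handle correctly.
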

\begin{proof}
By Proposition \ref{SR:P-1}, we have $\rho \vert_{\Delta} = \sigma \vert_{\Delta},$ so since $G(B)$ is a semisimple group and char~$K = 0,$ $\rho \vert_{\Delta}$ is completely reducible. Since $\Delta$ is a finite-index subgroup of $\Gamma$, it follows that $\rho$ is also completely reducible.
\end{proof}

\vskip5mm

\noindent {\it $SS$-rigidity and local rigidity.} Notice that since by Lemma \ref{D:L-Der} there are no nonzero derivations $\delta \colon \I \to K$, Proposition \ref{D:P-1} and the estimate given in (\ref{E:D-TSpDim}) yield that for $\Gamma = E(\Phi, \I),$ we have $\dim X_n (\Gamma) = 0$ for all $n \geq 1$, i.e.
$\Gamma$ is $SS$-rigid. In fact, Corollary \ref{SR:C-1} implies that
$\Gamma$
is {\it locally rigid}, that is
$H^1 (\Gamma, \mathrm{Ad} \circ \rho) = 0$ for {\it any} representation $\rho \colon \Gamma \to GL_m (K)$.
This is shown in \cite{LM}, and we recall the argument for the reader's convenience. Let
$V = K^m.$ It is well-known that
$$
H^1 (\Gamma, \mathrm{End}_K (V,V)) = \mathrm{Ext}_{\Gamma}^1 (V,V)
$$
(cf. \cite{LM}, pg. 37), and
$\mathrm{Ext}_{\Gamma}^1 (V,V) = 0$ by Corollary \ref{SR:C-1}. But $\mathrm{Ad} \circ \rho$, whose underlying vector space is $M_m (K)$, can be naturally identified as a $\Gamma$-module with $\mathrm{End}_K (V,V),$ so $H^1 (\Gamma, \mathrm{Ad} \circ \rho) = 0$, as claimed.

\vskip4mm

\noindent {\it Superrigidity} (cf. \cite{BMS}, \S 16, and \cite{Ma}, Ch. VII). \ Let $\Gamma = SL_n (\Z)$ ($n \geq 3$) and consider an abstract representation $\rho \colon \Gamma \to GL_m (K).$ Then there exists a rational representation $\sigma \colon SL_n (K) \to GL_m (K)$ such that
$$
\rho \vert_{\Delta} = \sigma \vert_{\Delta}
$$
for a suitable finite-index subgroup $\Delta \subset \Gamma.$ Indeed, let $f \colon \Z \to A$ be the homomorphism associated to $\rho$. Since $A \simeq K^{(1)} \times \cdots \times K^{(r)}$ by Proposition \ref{SR:P-1}, we see that
$f$ is simply a diagonal embedding of $\Z$ into $K \times \cdots \times K.$ But $f$ has Zariski-dense image, so $r = 1,$ and the rest follows.

Notice that for a general ring of $S$-integers $\I$, the algebraic group $G(A)$ that arises in Proposition \ref{SR:P-1} can be described as follows. Let $\G = R_{L/\Q} (_{L}\!G)$, where $_{L}\!G$ is the algebraic group obtained from $G$ by extending scalars from $\Q$ to $L$ and $R_{L/\Q}$ is the functor of restriction of scalars. Then $\G(K) \simeq G(K) \times \cdots \times G(K)$, with the factors corresponding to all of the distinct embeddings of $L$ into $K$ (cf. \cite{PR}, \S 2.1.2). The group $G(A)$ is then obtained from $\G(K)$ by simply projecting to the factors corresponding to the embeddings $\varphi_1, \dots, \varphi_r$, so any representation of $E(\Phi, \I)$ factors through $\G.$

\vskip3mm

\noindent {\bf Remark 5.3.} Let us point out that another situation in which $\mathrm{Der}^{f} (R,K) = 0$ occurs is if $K$ is a field of characteristic $p >0$ and $R$ is a commutative ring of characteristic $p$ such that $R^p = R.$
This allows one to use arguments similar to the ones presented in this section to recover results of Seitz \cite{S}.
Details will be published elsewhere.

\vskip5mm

\bibliographystyle{amsplain}

\end{document}